\newtheorem{theorem}{Theorem}[section]
\newtheorem{claim}[theorem]{Claim}
\newtheorem{proposition}[theorem]{Proposition}
\newtheorem{observation}[theorem]{Observation}
\theoremstyle{remark}
\newcommand{\cgA}{\mathcal{A}}
\newcommand{\cgC}{\mathcal{C}}
\newcommand{\cgE}{\mathcal{E}}
\newcommand{\cgR}{\mathcal{R}}
\newcommand{\bbP}{\mathbb{P}}
\newcommand{\Inc}{\operatorname{Inc}}
\newcommand{\se}{\operatorname{se}}
\newcommand{\maxdd}{\operatorname{maxdd}}
\newcommand{\maxud}{\operatorname{maxud}}
\newcommand{\dd}{\operatorname{dd}}
\newcommand{\ud}{\operatorname{ud}}
\newcommand{\uc}{\operatorname{uc}}
\newcommand{\dc}{\operatorname{dc}}
\newcommand{\cdim}{\operatorname{cdim}}
\newcommand{\bdim}{\operatorname{bdim}}
\newcommand{\bfN}{\mathbb{N}}
\newcommand{\bftwo}{\mathbf{2}}
\newcommand{\crit}{\operatorname{crit}}
\newcommand{\fdim}{\operatorname{fdim}}
\newcommand{\ldim}{\operatorname{ldim}}
\newcommand{\vcdim}{\operatorname{vcdim}}
\newcommand{\VC}{\operatorname{VC}}
\newcommand{\ple}{\operatorname{ple}}
\newcommand{\width}{\operatorname{width}}
\begin{document}

\title[CONVEX GEOMETRIES]{Concepts of Dimension for Convex Geometries}

\author[KNAUER]{Kolja Knauer}
\address{Universitat de Barcelona\\
Departament de Mathem\'{a}tiques i Inform\'{a}tica\\
Gran Via de les Corts Catalanes, 585\\
08007 Barcelona\\
Spain}
\email{kolja.knauer@gmail.com}

\author[TROTTER]{William T. Trotter}

\address{School of Mathematics\\
Georgia Institute of Technology\\
Atlanta, Georgia 30332\\
U.S.A.}

\email{wtt.math@gmail.com}

\date{March 15, 2023}

\subjclass[2010]{06A07, 05C35}

\keywords{Convex geometry, partially ordered set, dimension, standard example}

\begin{abstract} 
  Let $X$ be a finite set. A family $P$ of subsets of $X$ is  
  called a \textit{convex geometry} with ground set $X$ if 
  (1)~$\emptyset, X\in P$; (2)~$A\cap B\in P$ whenever 
  $A,B\in P$; and (3)~if $A\in P$ and $A\neq X$, there is an element $\alpha\in X-A$ 
  such that $A\cup\{\alpha\}\in P$.  As a non-empty family of sets, a convex
  geometry has a well defined $\VC$-dimension.  In the literature, a second
  parameter, called convex dimension, has been defined expressly for these
  structures.  Partially ordered by inclusion, a convex geometry is 
  also a poset, and four additional dimension parameters have been defined
  for this larger class, called Dushnik-Miller dimension, 
  Boolean dimension, local dimension, and fractional dimension, respectively.  
  For each pair of these six dimension parameters, we investigate whether there is 
  an infinite class of convex geometries on which one parameter is bounded and
  the other is not.
\end{abstract}

\maketitle

\section{Statement of Results}\label{sec:results}

The primary goal of this paper is to investigate concepts of dimension
for a special class of posets called convex geometries, that are defined as inclusion orders of certain set systems on a finite ground set.  The concepts of dimension that we consider 
are called convex dimension, $\VC$-dimension, Dushnik-Miller dimension,
Boolean dimension, fractional dimension, and local dimension,
denoted $\cdim(P)$, $\vcdim(P)$, $\dim(P)$, $\bdim(P)$, $\ldim(P)$, and $\fdim(P)$,
respectively.  We will also study a related poset parameter, called standard
example number, denoted $\se(P)$.  When $P$ is a convex geometry, the following 
inequalities are known or easily seen to hold: 
\begin{enumerate}
 \item $\cdim(P)\ge\dim(P)\ge\max\{\vcdim(P),\bdim(P),
\fdim(P), \ldim(P),\se(P)\}$, see Proposition~\ref{pro:cdim-dim},
\item $\se(P)\ge\vcdim(P)$ unless $\vcdim(P)=2$
and $\se(P)=1$, see Observation~\ref{obs:sevcdim}.
\end{enumerate}
 For readers who are familiar with 
concepts of dimension \emph{and} convex geometries,
we state here the results of this paper.  Motivation, 
definitions, and essential preliminary material will be provided in 
Sections~2 and~3.  Proofs are given in Sections~4 through~8, and we close with
some comments on open problems that remain in Section~9.

Our first result separates Dushnik-Miller dimension and convex dimension.

\begin{theorem}\label{thm:sep-dim-cdim}
  If $n\ge3$, there is a convex geometry $P_n$ such that
  $\dim(P_n)=3$, and $\cdim(P_n)=n+1$.
\end{theorem}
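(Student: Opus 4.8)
The plan is to reduce everything to the combinatorics of meet-irreducible closed sets. Writing $L_n$ for the lattice of closed sets of the convex geometry $P_n$ and $M(L_n)$ for the subposet of its meet-irreducible elements, I would use the fact (developed in the preliminary material of Section~3) that the convex dimension is exactly $\cdim(P_n)=\width(M(L_n))$: each linear order in a prefix-representation of $P_n$ is a maximal chain of $L_n$, such a family realizes $P_n$ if and only if its chains cover every meet-irreducible, and by Dilworth's theorem the least number of chains covering $M(L_n)$ equals its width. Thus the theorem splits into producing a meet-distributive lattice whose meet-irreducibles have width exactly $n+1$ while the lattice itself has Dushnik--Miller dimension $3$. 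Note that the inequality $\cdim\ge\dim$ from Proposition~\ref{pro:cdim-dim} only yields $\dim(P_n)\le n+1$ for free, so the bound $\dim(P_n)\le 3$ will have to be proved by exhibiting an explicit realizer.

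For the construction I would take $P_n$ to be the convex geometry of a planar point configuration $C_n$ consisting of $n+1$ points in convex position together with one further point placed in the interior, the interior point chosen so that it lies in the convex hull of a controlled collection of the boundary points; this is the feature that forces non-freeness (hence non-distributivity) and is what will keep the dimension small. Computing $\cdim$ then amounts to identifying the meet-irreducible closed sets, namely the closed sets admitting a unique feasible one-element extension. For $C_n$ these organize into $n+1$ chains, one per ``boundary direction,'' and they contain a distinguished antichain of size $n+1$, obtained by taking in each direction the maximal closed set avoiding the interior point. Exhibiting this antichain gives $\width(M(L_n))\ge n+1$, and decomposing $M(L_n)$ into $n+1$ chains gives the matching upper bound, so $\cdim(P_n)=n+1$.

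It remains to pin the Dushnik--Miller dimension at $3$. The lower bound $\dim(P_n)\ge 3$ I would obtain by locating inside $L_n$ a three-irreducible subposet, equivalently by checking that the Hasse diagram of $L_n$ is non-planar (a bounded lattice has dimension at most $2$ precisely when it is planar). The upper bound $\dim(P_n)\le 3$ is the crux and is where the configuration must be tuned: I would construct three explicit linear extensions $L^{1},L^{2},L^{3}$ of $L_n$ and verify that every incomparable pair of closed sets is reversed by some two of them. The delicate pairs are exactly the $\binom{n+1}{2}$ incomparabilities among the meet-irreducibles that make $\cdim$ large; the idea is that an antichain, however large, can be coordinatized using only two of the three orders (one ascending and one descending along the boundary directions), leaving the third order to carry the meet-distributive skeleton and to handle the comparabilities with $\emptyset$, the singletons, and the top. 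The main obstacle is therefore not the convex-dimension count but showing that this three-order scheme simultaneously reverses all incomparable pairs, that is, that the width-$(n+1)$ tangle of meet-irreducibles can be flattened into three dimensions without the dimension growing with $n$; verifying this, together with confirming that $C_n$ really is a convex geometry (meet-distributive), is the technical heart of the argument.
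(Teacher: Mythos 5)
Your reduction of convex dimension to the width of the poset of meet-irreducibles is exactly the paper's route (Theorem~\ref{thm:cdim-width}), but your proposed construction cannot work, and the step you yourself call the crux is never carried out. Take your configuration $C_n$: $n+1$ points in convex position plus one interior point $p$. Every one of the $n+1$ hull points is an extreme point of the configuration, so for each of them the full ground set minus that point is convexly closed; hence the top element of the lattice covers at least $n+1$ elements, i.e.\ $\maxdd(P_n)\ge n+1$. By the Boolean property (Theorem~\ref{thm:BA}) the geometry then contains a copy of $\bftwo^{n+1}$, and by Proposition~\ref{pro:cdim-dim} (or Theorem~\ref{thm:vcdim=maxdd}) we get $\dim(P_n)\ge\maxdd(P_n)\ge n+1$. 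So the dimension of your geometry grows with $n$ no matter where the interior point is placed; ``tuning'' it cannot help. More generally, any planar point configuration whose convex geometry is to have dimension $3$ can have at most $3$ extreme points, so a construction with a long convex-position boundary is the wrong shape from the start. Separately, even granting a construction, your proof of $\dim(P_n)\le 3$ is only an announced plan (``I would construct three explicit linear extensions \dots verifying this \dots is the technical heart''); nothing is actually verified, so the proposal is incomplete precisely at the point where all the difficulty lies.

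For contrast, the paper makes both issues disappear by construction: $P_n$ is defined inside the three-dimensional grid $Q_n$ of boxes $[i]\times[j]\times[k]$ with $i\in[2]$, $j,k\in[n]$, keeping every $[2]\times[j]\times[k]$ but only those $[1]\times[j]\times[k]$ with $j+k\le n+1$. Since $Q_n$ is (an inclusion copy of) a product of three chains, $\dim(Q_n)\le 3$, and dimension is monotone under taking subposets, so $\dim(P_n)\le 3$ is free --- no explicit realizer of the incomparabilities among the many meet-irreducibles is ever needed. The lower bound comes from $\maxdd(P_n)=3$, and the meet-irreducibles of the form $[1]\times[j]\times[k]$ with $j+k=n+1$ form an antichain of size $n+1$, giving $\cdim(P_n)=n+1$ by Theorem~\ref{thm:cdim-width}. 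If you want to salvage a geometric construction, you must keep the number of extreme points (equivalently, the down-degree of the top, equivalently the VC-dimension) bounded by $3$ while still manufacturing an antichain of $n+1$ meet-irreducibles; the paper's $P_n$ shows what such an object looks like order-theoretically.
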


The next result shows that the bound on Dushnik-Miller dimension in the preceding
theorem cannot be improved.

\begin{theorem}\label{thm:dimle2}
  If $P$ is a convex geometry and $\dim(P)\le2$, then $\cdim(P)=\dim(P)$.
\end{theorem}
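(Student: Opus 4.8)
The plan is to use the inequality $\cdim(P)\ge\dim(P)$ from Proposition~\ref{pro:cdim-dim}, so that it suffices to bound $\cdim(P)$ from above by $\dim(P)$. I would first recall the standard description of convex dimension: since $P$ is closed under intersection it is a meet-distributive lattice, its covering relations all have the form $A\lessdot A\cup\{\alpha\}$, and $\cdim(P)$ is the least number of maximal chains of $P$ that together contain every meet-irreducible element. Any chain of meet-irreducibles extends to a maximal chain of $P$, so by Dilworth's theorem this number equals $\width(M(P))$, the width of the subposet $M(P)$ of meet-irreducibles (the copoints of the convex geometry). If $\dim(P)=1$ then $P$ is a chain, $M(P)$ is a chain, and $\cdim(P)=1=\dim(P)$. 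Thus the whole theorem reduces to the claim that if $\dim(P)=2$ then $M(P)$ contains no antichain of size three.

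To analyse antichains I would pass to a geometric model of $\dim(P)=2$: fix a realizer $\{L_1,L_2\}$ and embed $P$ into the plane by $A\mapsto(f(A),g(A))$, where $f,g$ are order isomorphisms onto the images of $P$ in $L_1,L_2$, so that $A\subseteq B$ iff $f(A)\le f(B)$ and $g(A)\le g(B)$. Suppose $M_1,M_2,M_3\in M(P)$ form an antichain, and index them so that $f(M_1)<f(M_2)<f(M_3)$; incomparability forces $g(M_1)>g(M_2)>g(M_3)$, and a one-line coordinate computation then gives $M_1\cap M_3\subsetneq M_2\subsetneq M_1\vee M_3$ with $M_2$ incomparable to both $M_1$ and $M_3$. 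Hence it is enough to show that a convex geometry has no such \emph{sandwiched} meet-irreducible. I would stress that this is exactly where meet-distributivity must enter: the lattice $M_3$ has dimension two and three pairwise incomparable meet-irreducibles, so the statement is false for general lattices and can only follow from the anti-exchange property.

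The heart of the argument is therefore the lemma that in a convex geometry there are no closed sets $M_1,M_2,M_3$ with $M_2$ meet-irreducible, $M_1\cap M_3\subsetneq M_2\subsetneq M_1\vee M_3$, and $M_2$ incomparable to $M_1$ and to $M_3$. Let $\alpha$ be the unique extension point of $M_2$, so that $M_2\cup\{\alpha\}$ is the only upper cover of $M_2$ and every closed set properly containing $M_2$ contains $\alpha$. From $M_2\subsetneq M_1\vee M_3$ we get $\alpha\in\overline{M_1\cup M_3}$, while $M_1\cap M_3\subseteq M_2$ gives $\alpha\notin M_1\cap M_3$. The plan is to apply the anti-exchange axiom with base $M_2$: for any $p\notin M_2$ the closed set $\overline{M_2\cup\{p\}}$ properly contains $M_2$, hence contains $\alpha$, so if $p\neq\alpha$ then anti-exchange yields $p\notin\overline{M_2\cup\{\alpha\}}=M_2\cup\{\alpha\}$. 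Feeding the points of $(M_1\cup M_3)\setminus M_2$ into a shelling of the interval $[\,M_1\cap M_3,\;M_1\vee M_3\,]$ and tracking the step at which $\alpha$ is adjoined, one shows that $\alpha$ can never be created, contradicting $\alpha\in\overline{M_1\cup M_3}$. I expect this bookkeeping -- turning the single anti-exchange step into a contradiction over the whole interval, and dispatching the subcases $\alpha\in M_1$ and $\alpha\in M_3$ -- to be the main obstacle, and the only genuinely convex-geometric part of the proof.

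Granting the lemma, $M(P)$ has no antichain of size three, so $\width(M(P))\le 2$ and $\cdim(P)\le 2=\dim(P)$; combined with $\cdim(P)\ge\dim(P)$ this yields $\cdim(P)=\dim(P)$ and completes the case $\dim(P)=2$.
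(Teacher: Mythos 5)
Your reduction is fine as far as it goes: $\cdim(P)\ge\dim(P)$, the case $\dim(P)=1$, the identification $\cdim(P)=\width(M(P))$ via Theorem~\ref{thm:cdim-width}, and the coordinate computation showing that a $3$-antichain $M_1,M_2,M_3$ of meet-irreducibles in a $2$-dimensional embedding yields $M_1\cap M_3\subsetneq M_2\subsetneq M_1\vee M_3$ are all correct. The fatal problem is that your key lemma --- that \emph{no} convex geometry contains a ``sandwiched'' meet-irreducible --- is false, and a counterexample sits in this very paper. Take $P(1,4)$ from Section~\ref{sec:Pkn}: its closed sets are $\emptyset$, the four singletons, $\{1,2\},\{1,3\},\{1,4\}$, $\{1,2,3\},\{1,2,4\}$, and $[4]$. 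Put $M_1=\{3\}$, $M_2=\{2\}$, $M_3=\{4\}$. All three are meet-irreducible (their unique upper covers are $\{1,3\},\{1,2\},\{1,4\}$), they are pairwise incomparable, and $M_1\cap M_3=\emptyset\subsetneq M_2\subsetneq M_1\vee M_3=[4]$, since the only closed set containing both $3$ and $4$ is $[4]$. So the configuration you propose to refute by anti-exchange bookkeeping genuinely occurs in a convex geometry, and no such argument can succeed. (A symptom of this already appears in your sketch: the anti-exchange step you describe concludes $p\notin M_2\cup\{\alpha\}$ for $p\notin M_2$, $p\ne\alpha$, which is vacuously true and gives no traction.)

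The conceptual error is that passing from ``$3$-antichain of meet-irreducibles in a $2$-dimensional lattice'' to the sandwich condition discards the hypothesis $\dim(P)\le2$ irrecoverably: the sandwich condition is a consequence of it, but a strictly weaker one, satisfied also by the $3$-dimensional geometry $P(1,4)$. Any correct proof must exploit $2$-dimensionality more structurally. That is what the paper does: by Baker's theorem a finite lattice has dimension at most $2$ if and only if its order diagram is planar; fixing a planar drawing, the Boolean property (Theorem~\ref{thm:BA}) forces every interior face to be a diamond, whence every element in the interior of the drawing has up degree at least $2$. Thus all meet-irreducible elements lie on the left and right boundary chains, so $\width(M(P))\le2$, which is the bound you were after. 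If you want to salvage your setup, you would need to carry the full planar/coordinate structure (not just the sandwich relations) into the contradiction, which essentially reproduces the paper's planarity argument.
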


Although there are convex geometries with $\VC$-dimension~$2$ and standard
example number~$1$, we show that these two parameters are essentially the
same.

\begin{theorem}\label{thm:vcdim=se}
  If $P$ is a convex geometry, then $\vcdim(P)=\se(P)$ unless $\vcdim(P)=2$ and
  $\se(P)=1$.
\end{theorem}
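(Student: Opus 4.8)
The plan is to establish the inequality $\vcdim(P)\ge\se(P)$ directly, since the reverse inequality (up to the stated exception) is already recorded in Observation~\ref{obs:sevcdim}; the theorem then follows by combining the two. So suppose $\se(P)=n$ and fix a copy of the standard example $S_n$ inside $P$: elements $a_1,\dots,a_n,b_1,\dots,b_n\in P$, regarded as subsets of the ground set $X$, with $a_i\subsetneq b_j$ whenever $i\ne j$ and $a_i\not\subseteq b_i$ for every $i$ (the latter being the incomparability of $a_i$ and $b_i$).

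For each $i$, since $a_i\not\subseteq b_i$, I would choose a witness $y_i\in a_i\setminus b_i$ and set $Y=\{y_1,\dots,y_n\}$. The first thing to check is that the $y_i$ are distinct and that membership in the $b_j$ encodes the incidence pattern of the standard example; concretely, $y_i\in b_j$ if and only if $j\ne i$. Indeed, for $j\ne i$ we have $y_i\in a_i\subseteq b_j$, while $y_i\notin b_i$ by choice. Distinctness then follows because $y_i=y_j$ with $i\ne j$ would force $y_i=y_j\in a_j\subseteq b_i$, contradicting $y_i\notin b_i$. Hence $|Y|=n$.

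The key step is to shatter $Y$ using the intersection-closure axiom of convex geometries. For $J\subseteq\{1,\dots,n\}$ set $C_J=\bigcap_{j\in J}b_j$, with the convention $C_\emptyset=X$; by axioms~(1) and~(2) each $C_J$ lies in $P$. Since $y_i\in b_j$ exactly when $j\ne i$, the element $y_i$ belongs to $C_J$ precisely when $i\notin J$, so $C_J\cap Y=\{y_i:i\notin J\}$. As $J$ ranges over all subsets of $\{1,\dots,n\}$, the trace $C_J\cap Y$ ranges over every subset of $Y$; thus $Y$ is shattered by $P$ and $\vcdim(P)\ge n=\se(P)$.

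Finally, combining $\vcdim(P)\ge\se(P)$ with Observation~\ref{obs:sevcdim}, which gives $\se(P)\ge\vcdim(P)$ except when $\vcdim(P)=2$ and $\se(P)=1$, yields $\vcdim(P)=\se(P)$ in all remaining cases. I expect the only point requiring real care to be the encoding step---verifying both the distinctness of the witnesses $y_i$ and the equivalence that $y_i\in b_j$ holds iff $j\ne i$---since once that is in place, shattering is an immediate consequence of intersection-closure. It is worth noting that the antichain conditions among the $a_i$ and among the $b_j$ play no role in the argument; only the comparabilities $a_i\subsetneq b_j$ $(i\ne j)$ and the non-containments $a_i\not\subseteq b_i$ are used.
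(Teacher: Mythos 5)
Your proof is correct, but it takes a genuinely different route from the paper's. You prove $\vcdim(P)\ge\se(P)$ directly by exhibiting a shattered set: the witnesses $y_i\in a_i\setminus b_i$ satisfy $y_i\in b_j$ iff $j\ne i$, and the intersections $C_J=\bigcap_{j\in J}b_j$ (with $C_\emptyset=X$) then trace out all subsets of $\{y_1,\dots,y_n\}$. Note that this uses only the Base and Intersection Properties -- never the Extension Property -- so your inequality in fact holds for \emph{any} intersection-closed family containing the ground set, and it is self-contained in that it bypasses the identity $\vcdim(P)=\maxdd(P)$ (Theorem~\ref{thm:vcdim=maxdd}, imported from the literature), which the paper leans on throughout. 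The paper instead proves the equivalent inequality $\maxdd(P)\ge\se(P)$ by a lattice-theoretic argument: it first normalizes the standard example so that each $(A_i,B_i)$ is a critical pair (Proposition~\ref{pro:cg-se}), uses Proposition~\ref{pro:crit-pair-dd-ud} to get $\dd(A_i,P)=\ud(B_i,P)=1$, and then shows that the join $Y_0=A_1\vee\dots\vee A_n$ covers each of the $n$ sets $B_i'=Y_0\cap B_i$, producing a single element of down degree at least $n$. What the paper's heavier argument buys is structural information: combined with the Boolean property (Theorem~\ref{thm:BA}), an element of down degree $n$ yields an entire copy of $\bftwo^n$ inside $P$, which is strictly more than a shattered $n$-set. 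What your argument buys is brevity, elementarity, and greater generality. One small point to patch: your shattering argument presupposes $\se(P)=n\ge2$ (there is no standard example $S_1$), so for the case $\se(P)=1$ you should add the trivial remark that $\vcdim(P)\ge1$ for every convex geometry (since $\emptyset,X\in P$ and $X\ne\emptyset$); with that, your final case analysis combining with Observation~\ref{obs:sevcdim} goes through completely.
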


The next result collapses the exceptional case in the preceding theorem.

\begin{theorem}\label{thm:se=1}
  If $P$ is a convex geometry and $\se(P)=1$, then $\cdim(P)\le2$.
\end{theorem}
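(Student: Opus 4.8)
The plan is to deduce the bound on convex dimension from a bound on Dushnik--Miller dimension, using the immediately preceding result. By the inequality $\cdim(P)\ge\dim(P)$ recorded in Section~\ref{sec:results} together with Theorem~\ref{thm:dimle2}, it is enough to prove that $\dim(P)\le 2$, for then Theorem~\ref{thm:dimle2} gives $\cdim(P)=\dim(P)\le 2$. I will establish this in contrapositive form: invoking the classical characterization that $\dim(P)\le 2$ if and only if the incomparability graph $\Inc(P)$ is a comparability graph, I will assume $\dim(P)\ge 3$ and exhibit a copy of the standard example $S_2$ inside $P$, which forces $\se(P)\ge 2$ and contradicts the hypothesis.

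The first substantive step is to record how the convex geometry axioms constrain the critical pairs of $P$, since these are the pairs whose reversal controls dimension and out of which an $S_2$ must ultimately be assembled. Let $(A,B)$ be a critical pair, so that $A$ and $B$ are incomparable closed sets, every closed set properly contained in $A$ is properly contained in $B$, and every closed set properly containing $B$ properly contains $A$. Because $P$ is closed under intersection, $A\cap B$ is closed, is properly contained in $A$, and contains every closed set properly below $A$; hence $A\cap B$ is the unique lower cover of $A$ and $A$ is join-irreducible. Dually, using axiom~(3) to form the closure $\overline{A\cup B}$, every closed set properly above $B$ contains $\overline{A\cup B}$, so $\overline{A\cup B}$ is the unique upper cover of $B$ and $B$ is meet-irreducible. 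In these terms a standard example $S_2$ is exactly a pair of critical pairs $(A,B)$, $(A',B')$ that \emph{cross}, in the sense that $A\subseteq B'$ and $A'\subseteq B$ while $A\not\subseteq B$ and $A'\not\subseteq B'$, with the four sets distinct and pairwise incomparable in the pattern $A\parallel A'$, $B\parallel B'$, $A\parallel B$, $A'\parallel B'$.

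The heart of the proof --- and the step I expect to be the main obstacle --- is to produce such a crossing pair from the failure of $\Inc(P)$ to be a comparability graph. Starting from a minimal obstruction to transitive orientability (a shortest forcing cycle among incomparable pairs, which may be taken to consist of critical pairs), the strategy is to use closure under intersection to shorten it: from two critical pairs $(A_i,B_i)$ and $(A_j,B_j)$ in the obstruction I would form auxiliary closed sets such as $A_i\cap A_j$, $A_i\cap B_j$, or $\overline{A_i\cup A_j}$, and, using the join-irreducibility of the lower sets and the meet-irreducibility of the upper sets together with the anti-exchange property built into axiom~(3), force one of the shortcut containments $A_i\subseteq B_j$ to hold, contradicting minimality unless the obstruction already has length two. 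The delicate point is that the corresponding implication is false for general posets, so the argument must genuinely exploit intersection-closure and anti-exchange; in particular one must verify that the four sets produced are distinct and realize \emph{exactly} the comparabilities and incomparabilities of $S_2$, without inadvertently creating a comparability that collapses the configuration.

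Once the obstruction is reduced to length two, its two critical pairs are a crossing pair and hence a copy of $S_2$ in $P$, so $\se(P)\ge 2$. This contradicts $\se(P)=1$, so $\dim(P)\le 2$, and Theorem~\ref{thm:dimle2} then yields $\cdim(P)\le 2$.
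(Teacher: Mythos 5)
Your reduction is legitimate as far as it goes: Theorem~\ref{thm:dimle2} precedes this result in the paper, so deducing $\cdim(P)\le 2$ from $\dim(P)\le 2$ is not circular, and your preliminary analysis of critical pairs (the lower element is join-irreducible because $A\cap B$ is its unique lower cover, the upper element is meet-irreducible because $A\vee B$ is its unique upper cover) is correct --- it is exactly Proposition~\ref{pro:crit-pair-dd-ud}. But the proof has a genuine gap precisely where you flag ``the main obstacle'': the implication $\dim(P)\ge 3\Rightarrow \se(P)\ge 2$ is never actually established. What you give is a plan --- take a shortest forcing-cycle obstruction to transitive orientability of the incomparability graph and shorten it using sets like $A_i\cap A_j$, $A_i\cap B_j$, or joins --- together with an admission that the step is delicate and fails for general posets. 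No argument is supplied that a shortcut containment $A_i\subseteq B_j$ can be forced, nor that the four sets produced realize exactly the comparabilities of $S_2$. Note also that the reduction buys you nothing by itself: since $\dim(P)\le\cdim(P)$, the statement ``$\se(P)=1$ implies $\dim(P)\le 2$'' is essentially equivalent to the theorem, so the entire content of the proof sits inside the unexecuted step. Finally, a minimal obstruction to transitive orientability is an unwieldy object to work with: dimension at least $3$ is not witnessed by a single cycle of incomparable pairs but by the impossibility of covering $\crit(P)$ with two reversible sets, and your sketch does not engage with that structure.

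The paper's proof is different and much more local, and it suggests how your outline could be repaired. First, $\se(P)=1$ forces $\maxdd(P)\le 2$ (Observation~\ref{obs:sevcdim}) --- a fact you never invoke, though it is a key lever. Second, by Theorem~\ref{thm:cdim-width}, $\cdim(P)$ equals the width of the subposet $J$ of meet-irreducible elements, so it suffices to rule out a $3$-element antichain $\{B_1,B_2,B_3\}$ in $J$; this bounds $\cdim(P)$ directly, with no detour through $\dim(P)$. Starting from such an antichain, the paper uses the unique upper covers $Y_i$, the Boolean property (Theorem~\ref{thm:BA}), gradedness, the lattice structure, and $\maxdd(P)=2$ to manufacture a copy of $S_2$, contradicting $\se(P)=1$. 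Replacing your ``minimal obstruction to orientability'' by ``$3$-element antichain of meet-irreducibles'' is exactly the move that makes the starting configuration small and concrete enough for the convex-geometry axioms to bite; as it stands, your central step is a conjecture, not a proof.
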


The next result separates convex dimension, Dushnik-Miller dimension, 
Boolean dimension and local dimension from $\VC$-dimension and 
fractional dimension.  In stating this result, we use the abbreviation 
$[n]$ for $\{1,\dots,n\}$ for an integer $n$, where for later purposes in the paper we set $[n]=\emptyset$ if $n<1$.  Also, we denote the base~$2$ logarithm of $n$ 
by $\lg n$, while the natural logarithm of $n$ is denoted $\log n$.

\begin{theorem}\label{thm:Pkn}
  If $k$ and $n$ are integers with
  $1\le k\le n-2$, there is a convex geometry $P(k,n)$ with ground set $[n]$ such that:
  \begin{enumerate}
    \item $\vcdim(P(k,n))=\se(P(k,n))=k+1$.
    \item $\fdim(P(k,n))< 2^{k+1}$.
    \item For fixed $k\ge1$,
      $\bdim(P(k,n))\rightarrow\infty$ as $n\rightarrow\infty$.
    \item For fixed $k\ge1$,
      $\ldim(P(k,n))\rightarrow\infty$ as $n\rightarrow\infty$.
    \item $\dim(P(1,n))=1+\lfloor\lg n\rfloor$ and $\dim(P(k,n))\le (k+1)2^{k+2}\log n$.
        \item $\cdim(P(k,n))=\binom{n-1}{k}$.

  \end{enumerate}
  In particular, for fixed $k\ge1$, $\dim(P(k,n))\rightarrow\infty$ as $n\rightarrow\infty$ but $\se(P(k,n))$ is constant, i.e., convex geometries are not \emph{dim-bounded}.
\end{theorem}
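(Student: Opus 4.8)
The plan is to exhibit a single explicit convex geometry $P(k,n)$ on the ground set $[n]$ and then verify the six listed properties one at a time; the concluding ``in particular'' assertion need not be argued separately, since it is an immediate consequence of~(1) and~(5): part~(1) gives $\se(P(k,n))=k+1$ (constant in $n$), while part~(5) forces $\dim(P(k,n))\to\infty$ as $n\to\infty$ for each fixed $k\ge1$, and together with the inequality $\dim\ge\se$ of Proposition~\ref{pro:cdim-dim} this shows that no function of $\se$ can bound $\dim$ on this class. I would design $P(k,n)$ so that its convex sets are plentiful enough to force a large convex dimension, yet so constrained that no subset of size $k+2$ is shattered; concretely, the construction should make the shattered subsets of $[n]$ exactly certain sets of size at most $k+1$, while the meet-irreducible convex sets (copoints) are naturally indexed by the $k$-element subsets of $[n-1]$. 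The first task is then routine: verify axioms (1)--(3) for $P(k,n)$, i.e.\ that it contains $\emptyset$ and $[n]$, is closed under intersection, and has the one-point extension (accessibility) property.

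For part~(1) I would compute $\vcdim$ by a direct shattering analysis, showing that some $(k+1)$-subset is shattered but that every $(k+2)$-subset fails to be, the latter being where the defining constraint of the construction does its work. For $\se$ I would exhibit a copy of the standard example $S_{k+1}$ inside $P(k,n)$ and show that no $S_{k+2}$ embeds, giving $\se=k+1$; that $\vcdim=\se$ then also follows from Theorem~\ref{thm:vcdim=se}, the exceptional case $\se=1$ not arising here. For part~(6) I would identify the copoints of $P(k,n)$ and apply the Edelman--Jamison characterization of convex dimension: the upper bound comes from exhibiting $\binom{n-1}{k}$ compatible orderings realizing every convex set as an intersection, and the matching lower bound from a family of $\binom{n-1}{k}$ copoints that no smaller family can separate.

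Parts~(2) and~(5) are the quantitative estimates. For $\fdim<2^{k+1}$ the plan is a weighting argument: place a probability distribution on the linear extensions of $P(k,n)$, built from random orderings respecting the $k$-constrained structure, under which every critical pair is reversed with probability strictly greater than $2^{-(k+1)}$; by the duality characterization of fractional dimension this yields $\fdim<2^{k+1}$. For the Dushnik--Miller upper bound I would give an explicit realizer via a divide-and-conquer (binary) encoding of $[n]$, where roughly $\log n$ coordinates handle the linear part and a bounded, $k$-dependent overhead handles the constrained part, producing the factor $(k+1)2^{k+2}\log n$. For the exact value $\dim(P(1,n))=1+\lfloor\lg n\rfloor$ I would match the upper bound with a lower bound showing that $\lfloor\lg n\rfloor$ linear extensions are too few to reverse all critical pairs, via a counting/separation argument on the binary labels of $[n]$.

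The main obstacle is parts~(3) and~(4), the lower bounds $\bdim,\ldim\to\infty$. These do \emph{not} follow from part~(5): since $\dim$ dominates both $\bdim$ and $\ldim$ by Proposition~\ref{pro:cdim-dim}, knowing $\dim\to\infty$ says nothing in the needed direction, and, crucially, the usual engine for large Boolean or local dimension---a large standard example---is unavailable here, because $\se(P(k,n))=k+1$ is bounded. I therefore expect the heart of the proof to be a direct combinatorial lower bound: one must show that any Boolean realizer (respectively, any local realizer) of bounded size produces too few distinguishable patterns on the incomparable pairs of $P(k,n)$ to witness all of them, forcing the number of components to grow with $n$. Concretely, I would bound from above the number of critical pairs that a size-$t$ Boolean (or local) realizer can simultaneously reverse and compare it with the number of critical pairs the construction creates as $n\to\infty$. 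Making this count tight enough to force divergence, \emph{without} the crutch of standard examples, is the delicate step.
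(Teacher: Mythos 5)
Your overall architecture (explicit construction, Edelman--Jamison width characterization for $\cdim$, shattering plus standard-example analysis for part~(1), a weighting argument for $\fdim$, binary encoding for $\dim(P(1,n))$ and a probabilistic-flavored bound for general $k$) matches the paper's proof, and you are right that the ``in particular'' clause follows from (1), (5) and $\dim\ge\se$. But your plan for parts~(3) and~(4) --- which you correctly single out as the crux --- rests on an idea that cannot work. You propose to ``bound from above the number of critical pairs that a size-$t$ Boolean (or local) realizer can simultaneously reverse'' and compare it with the number of critical pairs of $P(k,n)$. No such quantitative bound exists: bounded Boolean or local dimension is compatible with arbitrarily many critical and incomparable pairs, since $\bdim(S_m)\le 4$ and $\ldim(S_m)\le 3$ for all $m$ while $S_m$ has $m$ critical pairs and on the order of $m^2$ incomparable pairs. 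Likewise, ``too few distinguishable patterns on the incomparable pairs'' is never a contradiction by itself, because a Boolean realizer imposes no injectivity on query strings --- all incomparable pairs may happily share strings outside $\tau$. The missing ingredient is structural, not quantitative. The paper (Propositions~\ref{pro:bdim-dim} and~\ref{pro:ldim-dim}) first reduces to $k=1$ via the monotonicity statement (Proposition~\ref{pro:monotonic}), then runs a Ramsey argument in the spirit of~\cite{BPST20}: color each triple $i_1<i_2<i_3$ in $[n]$ by the query string $q(A,B,\cgR)$ of the incomparable pair of meet-irreducibles $A=[i_1-1]\cup\{i_2\}$, $B=[i_2-1]\cup\{i_3\}$ (at most $2^t$ colors); for $n$ large, Ramsey's theorem gives a homogeneous $4$-set $i_1<i_2<i_3<i_4$, and with $C=[i_3-1]\cup\{i_4\}$ the equality $q(A,B,\cgR)=q(B,C,\cgR)$ combined with transitivity of each linear order forces $q(A,C,\cgR)=q(A,B,\cgR)$ --- a contradiction, since $A\parallel B$ but $A\subset C$, so exactly one of these strings must lie in $\tau$. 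The local-dimension proof is analogous ($4$-subsets colored by $t^2$ occurrence-index pairs, homogeneous $7$-set). It is precisely this exploitation of transitivity across overlapping meet-irreducibles --- a resource standard examples lack --- that replaces the unavailable standard-example engine, and no counting of pairs against realizer size can substitute for it.

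A secondary gap: you never actually define $P(k,n)$, only desiderata for it, so none of the verifications can be carried out as written. The paper's rule is that $A\subseteq[n]$ lies in $P(k,n)$ iff $|A|=k+i-1$ implies $[i-1]\subseteq A$; every subsequent argument (the identification of the meet-irreducibles $J(k,n)$, the critical pairs $(\{i\},[i-1]\cup B)$, the $(k,n)$-distinguishing reformulation of dimension, and the Ramsey colorings above) leans on this explicit form. Also a small correction to your description: the meet-irreducibles are not in bijection with the $k$-subsets of $[n-1]$; those index only the minimum-size copoints (the $k$-sets avoiding $1$), which form the maximum antichain of size $\binom{n-1}{k}$, while $J(k,n)$ contains further, larger elements arranged in chains above them.
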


The most interesting and best-understood case in Theorem~\ref{thm:Pkn} is $P(1,n)$. However, we believe that it is of general interest to see $P(1,n)$ as a member of a larger family $P(k,n)$. In particular it shows that the properties of $P(1,n)$ are not an effect of low VC-dimension or standard-example number. 

%


\section{Convex Geometries and Partially Ordered Sets}\label{sec:cgs-posets}

A \textit{partially ordered set} (we prefer the 
short form \textit{poset}) is a set $P$ equipped with a binary
relation $\le$ that is reflexive, antisymmetric and transitive.
We will assume that readers are familiar with basic concepts for posets 
including: covering relation, order diagrams (also called Hasse diagrams); chains and antichains;
subposets; maximal and minimal points; isomorphic posets; and 
linear extensions.  We will also assume that readers are familiar with
the basics of finite lattices, including zeroes, ones, meets and joins.

In~\cite{EdeJam87}, Edelmann and Jamison introduce a class of posets they
call \textit{convex geometries}.  As noted in~\cite{EdeJam87}, posets in 
this class have been studied by various authors, and many combinatorial 
objects naturally carry the order structure of a convex geometry. Examples 
include subtrees of a tree~\cite{Boul67}, convex subsets of a poset~\cite{BirBen85},
convex subgraphs of an acyclic digraph~\cite{Pfal71}, transitively oriented 
subgraphs of a transitively oriented digraph~\cite{Bjor82}, convex sets 
of oriented matroids~\cite{LasVer80}, and many more as discussed 
in~\cite{EdeJam87}.

As pointed out in~\cite{EdeJam87}, the notation and terminology in the
literature for convex geometries is not uniform.  As just one example, they
have also been called \textit{antimatroids}, e.g., by Korte, 
Lov\'asz, and Schrader~\cite{KoLoSc91}.  Accordingly, a unifying 
framework for convex geometries is developed in~\cite{EdeJam87}, and 
we will follow to a large degree their framework.

Let $X$ be a finite non-empty\footnote{Some authors allow the ground set $X$ to
be the empty set, leading to the family $\{\emptyset\}$ to be a convex geometry.
Very little of interest can be said about this special case, so we only
consider non-empty ground sets.} set.  A \textit{convex geometry} with ground 
set $X$ is a family $P$ of subsets of $X$ satisfying the following three properties:

\begin{enumerate}
  \item $\emptyset, X\in P$. \hfill (\textit{Base Property})
  \item If $A$ and $B\in P$, then $A\cap B\in P$. \hfill    (\textit{Intersection Property})
  \item If $A\in P$, and $A\neq X$, there exists
    $\alpha\in X-A$ such that $A\cup\{\alpha\}\in P$.\\
    \hfill (\textit{Extension Property})
\end{enumerate}
We refer the reader to~\cite{EdeJam87} and~\cite{EdeSak88} for additional background
information on convex geometries, including several equivalent definitions.  

There are two special cases of convex geometries which are of particular interest in this 
paper. As a first example, when $L$ is a linear order on a finite non-empty set $X$, we obtain 
a convex geometry $P$ from $L$, with $|P|=1+|X|$, by taking $P$ as the set of all 
initial segments of $L$, i.e., if $|X|=n$ and $L=\alpha_1<\dots<\alpha_n$, then 
$P$ consists of the empty set together with the subsets of $X$ of the form 
$\{\alpha_1,\dots,\alpha_i\}$, where $i\in [n]$.  These simple 
examples are called \textit{linear geometries}.

As a second example, when $X$ is a finite non-empty set, and $P$ consists of \emph{all} 
subsets of $X$, then $P$ is a convex geometry.  This special case 
has been called various names in the literature, with \textit{Boolean algebra} 
and \textit{subset lattice}
being two popular choices.  To emphasize the relationship between $|X|$ and $|P|$,
we will denote this special case as $P=\bftwo^n$, where $n=|X|$.  

As a family of sets, a convex geometry is partially ordered by inclusion, and
is therefore a poset.  When $P$ is a convex geometry, $P$ is closed under 
intersections, but in general, $P$ is not closed under unions.  Nevertheless,  
a convex geometry is a lattice.  The empty set is the zero, while
the ground set $X$ is the one. For sets $A,B\in P$, we have:
\begin{align*}
  A\wedge B &= A\cap B,\quad\text{and}\\
  A\vee B &=\cap\{C\in P:A\cup B\subseteq C\}.
\end{align*}
Although we will reference results that involve posets in general, our
particular focus is on the class of convex geometries.

\section{Concepts of Dimension}\label{sec:dimension-concepts}

When $P$ is a poset, we will sometimes use the short form $a\le_P b$ as a substitute
for $a\le b$ in $P$.  Also, we will write $a\parallel_P b$ when $a$ and $b$ 
are distinct incomparable elements of $P$.
As our emphasis is combinatorial in the main, and all parameters we study have
the same value for two posets that are isomorphic, we will say that $P=Q$ 
when $P$ and $Q$ are isomorphic posets.  In the same spirit, we say $P$ 
\textit{contains} $Q$ when there is a subposet of $P$ that is isomorphic to 
$Q$.  

As it serves to motivate the definitions for the subclass of convex geometries,
we elect to begin with four concepts of dimension for the broader class
of posets.  The first concept of dimension we will investigate is the classic parameter
defined by Dushnik and Miller~\cite{DusMil41}.  
When $t\ge1$, a sequence $(L_1,\dots,L_t)$ of linear extensions of a 
poset $P$ is a \textit{realizer of $P$} if for all $x,y\in P$, 
$x\le_P y$ if and only if $x\le y$ in $L_i$ for all $i\in[t]$.  The 
\textit{Dushnik-Miller dimension} of $P$, denoted $\dim(P)$, is the least 
positive integer $t$ such that $P$ has a realizer of size~$t$.  Following the
traditions of the extensive literature on this parameter, it will henceforth
simply be called \textit{dimension}.

For each $t\ge2$, the \emph{standard example $S_t$} is a height~$2$ 
poset with minimal elements 
$a_1,\dots,a_t$, maximal elements $b_1,\dots,b_t$, and order relation $a_i<b_j$ in 
$S_t$ when $1\le i,j\le t$ and $i\neq j$.  As noted in~\cite{DusMil41},
$\dim(S_t)=t$ for all $t\ge2$.  The \textit{standard example number} of a 
poset $P$, denoted $\se(P)$, is set to be~$1$ when $P$ does not contain the 
standard example $S_2$; otherwise $\se(P)$ is the largest $t\ge2$ for which 
$P$ contains the standard example $S_t$.  Evidently, $\dim(P)\ge\se(P)$ for 
all posets $P$. 

On the one hand, the inequality $\dim(P)\ge\se(P)$ can be far from tight, as among
the class of posets which have standard example number~$1$ (this is the well
studied class of interval orders~\cite{Fis70,Trot97}), there are posets that have arbitrarily large
dimension.  At the other extreme, when $P=\bftwo^n$ is a Boolean algebra and $n>2$, then  we have $n=\dim(P)=\se(P)$. Indeed, $S_n$ is induced in $P$ by all subsets of size $1$ and $n-1$. On the other hand for $i\in [n]$ define $L_i$ as the lexicographic ordering of $\bftwo^n$ with respect to the ordering $i<i+1<\cdots<n<1<\cdots<i-1$, then it is easy to see that $(L_1,\dots,L_n)$ is a realizer of $P$. 

A class $\bbP$ of posets is said to be \emph{$\dim$-bounded} if there is a function
$f:\bfN\rightarrow \bfN$ such that for every $P\in\bbP$, $\dim(P)\le f(\se(P))$.
We point out the analogous problem in graph theory.  Although there
are triangle-free graphs with arbitrarily large chromatic number, there
are interesting classes of graphs where the chromatic number is bounded in terms
of maximum clique size.  Such classes are said to be \textit{$\chi$-bounded}. 

Blake, Hodor, Micek, Seweryn and Trotter~\cite{BHMST23+} have just resolved a 
conjecture that is more than~$40$ years old by showing that the class of posets 
that have planar cover graphs is $\dim$-bounded.  In time, readers will 
sense how their result prompts many of the questions we address in this paper.
For readers who are new to the concept of dimension for
posets, compact summaries are given in several recent research papers including
\cite{JMMTWW16,MicWie17,BlMiTr23+}, and the survey 
paper~\cite{Trot19}.  

When $P$ is a poset, we let
$\Inc(P)$ denote the set of all pairs $(a,b)$ of distinct elements of $P$ with
$a\parallel_P b$. Trivially, the following statements are equivalent:
(1)~$\Inc(P)=\emptyset$; (2)~$P$ is a chain; and (3)~$\dim(P)=1$.
When $\Inc(P)\neq\emptyset$, and $t\ge2$, a sequence $(L_1,L_2,\dots,L_t)$ of 
linear extensions of $P$ is a realizer of $P$ if and only if 
for every $(a,b)\in\Inc(P)$, there is some $i\in[t]$ such that $a>b$ in $L_i$.

When $S$ is a non-empty subset of $\Inc(P)$, we say $S$ is \textit{reversible}
if there is a linear extension $L$ of $P$ such that $a>b$ in $L$ whenever $(a,b)\in S$.
Accordingly, when $\Inc(P)\neq\emptyset$, $\dim(P)$ is the least integer $t\ge2$ such
that $\Inc(P)$ can be covered by $t$ reversible sets.

Now let $P$ be a poset that is not a chain.  A pair $(a,b)\in\Inc(P)$ is
called a \textit{critical pair} if (1)~$x<_P b$ whenever $x<_P a$; and (2)~$a<_P y$ whenever
$b<_P y$.  The set of all critical pairs of $P$ is denoted $\crit(P)$.
A sequence $(L_1,\dots,L_t)$ of linear extensions of
$P$ is a realizer of $P$ if and only if for every $(a,b)\in\crit(P)$, there
is some $i\in[t]$ such that $a>b$ in $L_i$.  Accordingly, $\dim(P)$ is the least
integer $t\ge2$ such that $\crit(P)$ can be covered with $t$ reversible sets.

Again, let $P$ be a poset that is not a chain, and let $k\ge2$.  A sequence 
$((a_1,b_1),\dots,(a_k,b_k))$ of pairs from $\Inc(P)$ is called an \textit{alternating
cycle} in $P$ if $a_i\le_P b_{i+1}$ for all $i\in[k]$.  As suggested by the terminology,
this requirement holds cyclically, i.e., we also require $a_k\le_P b_1$.
An alternating cycle $((a_1,b_1),\dots,(a_k,b_k))$ is said to be \textit{strict}
if for all $i,j\in[k]$, $a_i\le_P b_j$ if and only if $j=i+1$.
As is also well known, a non-empty subset $S\subseteq\Inc(P)$ is reversible if and only if there are no strict alternating cycles in $P$ consisting entirely of pairs from $S$.
Also, we note that when $((a_1,b_1),\dots,(a_k,b_k))$ is a strict alternating
cycle, the sets $\{a_i:i\in[k]\}$ and $\{b_i:i\in[k]\}$ are both $k$-element
antichains.

Let $P$ be a poset, and let $\cgR=(L_1,\dots,L_t)$ be a sequence of linear orders on the
ground set of $P$ (these linear orders need not be linear extensions).  For a pair
$(x,y)$ of distinct elements of $P$, determine a $0$--$1$ \emph{query string} 
$q=q(x,y,\cgR)$ of length~$t$ by setting $q(i)=1$ 
if $x<y$ in $L_i$; otherwise, set $q(i)=0$.  The sequence $\cgR$ is called a \emph{Boolean 
realizer for $P$} if there is a set $\tau$ of bit strings of length~$t$ such that 
for each pair $(x,y)$ of distinct elements of $P$, $x<_P y$ if and only if 
$q(x,y,\cgR)\in \tau$.  As defined by Ne\v{s}et\v{r}il and Pudl\'{a}k 
in~\cite{NesPud89}, the \emph{Boolean dimension} of $P$, denoted $\bdim(P)$, is 
the least positive integer $t$ for which $P$ has a Boolean realizer of size~$t$.
For every poset $P$, we always have $\bdim(P)\le\dim(P)$, 
since a realizer $\cgR=(L_1,\dots,L_t)$ is a Boolean realizer with
$\tau=\{q\}$, where $q$ is a bit string of 
length~$t$ with $q(i)=1$ for all $i\in[t]$.  On the other hand, it is an easy 
exercise to verify that if $t\ge2$ and $S_t$ is a standard example, then 
$\bdim(S_2)=2$; $\bdim(S_3)=3$; and $\bdim(S_t)=4$ for all $t\ge4$.  
We refer readers to~\cite{MeMiTr20}, \cite{FeMeMi20} and \cite{TroWal17}
for recent results on Boolean dimension.

Let $P$ be a poset.  A linear order $M$ is called a \emph{partial linear extension},
abbreviated $\ple$, of $P$ if $M$ is a linear extension of a subposet of $P$. 
A sequence $(M_1,\dots,M_m)$ of $\ple$'s of $P$ is called a \emph{local realizer of $P$}
if (1)~whenever $x\le_P y$, there is some $i\in[m]$ with $x\le y$ in $M_i$; and (2)~whenever
$(x,y)\in\Inc(P)$, there is some $j\in[m]$ with $x>y$ in $M_j$.  As defined
by Ueckerdt\footnote{T. Ueckerdt proposed the notion of local dimension at the 
2016 Order and Geometry Workshop held in Gu\l towy, Poland.},
the \emph{local dimension} of $P$, denoted $\ldim(P)$, is the
least $r$ for which there is a local realizer $(M_1,\dots,M_m)$ of $P$ such that
for each $x\in P$, there are at most $r$ different values of $i\in[m]$ with $x\in M_i$.
Again, it is clear that $\ldim(P)\le\dim(P)$ since a realizer is
a local realizer.  Also, 
it is an easy exercise to show that if $t\ge2$ and $S_t$ is the standard example,
then $\ldim(S_2)=2$ and $\ldim(S_t)=3$ for all $t\ge3$.  
We refer readers to~\cite{BoGrTr20}, \cite{KMMSSUW20} and~\cite{BPST20}
for recent results on local dimension.  In particular, \cite{BPST20} contrasts the three
notions of dimension we have defined thus far.

It is worth noting that neither of Boolean dimension and local dimension is bounded in 
terms of the other, as the following results are proved in~\cite{TroWal17}:
\begin{enumerate}
  \item  There is a constant $C$ such that if $\ldim(P)=3$, then $\bdim(P)\le C$.
  \item  For every $t\ge4$, there is a poset $P$ with $\ldim(P)=4$ and $\bdim(P)\ge t$.
  \item  For every $t\ge3$, there is a poset $P$ with $\bdim(P)=3$ and $\ldim(P)\ge t$.
\end{enumerate}

Let $P$ be a poset, and let $\cgE(P)$ be the set of all linear extensions
of $P$.  A \emph{fractional realizer of $P$} is a function $f$ which assigns
non-negative real numbers to the linear extensions in $\cgE(P)$ such that whenever
$(x,y)\in\Inc(P)$, $\sum\{f(L):L\in\cgE(P), x>y$ in $L\}\ge1$.  
As defined by Felsner and Trotter~\cite{FelTro94},
the \emph{fractional dimension} of $P$, denoted $\fdim(P)$, is the least positive
real number $t$, with $t\ge1$, such that there is a fractional realizer $f$ with
$\sum\{f(L):L\in \cgE(P)\}=t$.  Now we have $\fdim(P)\le\dim(P)$ as we can take
$f$ to the $0$--$1$ function assigning value $1$ to linear extensions in a 
realizer of $P$, while assigning value $0$ to all other linear extensions.
It is an easy exercise to verify that $\fdim(S_t)=t$ for all $t\ge2$.  On the
other hand, it is a nice exercise to show that if $\se(P)=1$, then $\fdim(P)<4$. 
We refer readers to~\cite{FelTro94} and~\cite{BiHaPo13} for additional information 
and results on fractional dimension. 

For the two special cases of convex geometries discussed in the preceding section,
we first observe that if $P$ is a linear geometry, then $\dim(P)=
\bdim(P)=\ldim(P)=\fdim(P)=\se(P)=1$.  The situation with Boolean algebras is more
complex.  If $P=\bftwo^n$ is a Boolean algebra, then
\begin{enumerate}
  \item $\dim(P)=\fdim(P)=n$, see~\cite{LR99},
  \item $\se(P)=n$ unless $n=2$, and in this case $\se(P)=1$,
  \item $\bdim(P)=\Omega(n/\log n)$ and $\bdim(P)<n$, see~\cite{BHLMM23}.
  \item $\ldim(P)= \Omega(n/\log n)$ and it is open if $\ldim(P)<n$, see~\cite{KMMSSUW20}.
\end{enumerate}

\subsection{$\VC$-Dimension}

We now discuss two additional dimension parameters defined for convex geometries 
but not for posets in general.  The first of the two is defined for any non-empty 
family of sets.

Let $X$ be a finite set, and let $F$ be a non-empty family of subsets of $X$. 
The \emph{$VC$-dimension} of $F$, denoted $\vcdim(F)$, is defined as follows.
Set $\vcdim(F)=0$ if there is no element $\alpha\in X$ for which there
are sets $A,A'\in F$ with $\alpha\in A$ and $\alpha\not\in A'$;
otherwise, set $\vcdim(F)$ to be the largest $t\ge1$ for which there
is a $t$-element subset $\{\alpha_1,\dots,\alpha_t\}$ of $X$ such that
for every set $S\subset[t]$, there is a set $A=A(S)\in F$ such that
$\alpha_i\in A$ if and only if $i\in S$.   We note that
if $P$ is a linear convex geometry, then $\vcdim(P)=1$.  Also, if
$P$ is the Boolean algebra $\bftwo^n$, then $\vcdim(P)=n$.

The next two theorems, both proved in~\cite{EdeJam87}, make clear the 
essential role Boolean algebras play in a discussion of convex geometries. For the statement, recall that for elements $x,y$ of a poset $P$ their \emph{interval} is $[x,y]=\{z\in P\mid x\leq z \leq y\}$. 

\begin{theorem}[Boolean property]\label{thm:BA}
  Let $P$ be a convex geometry, let $X$ and $Y$ be distinct elements of 
  $P$ such that (1)~$Y\neq\emptyset$; and (2)~$X$ is the intersection of all 
  sets in $P$ that are covered by $Y$ in $P$.  Then the interval $[X,Y]$ of 
  $P$ is isomorphic to the Boolean algebra $\bftwo^m$, where $m$ is the number 
  of sets that are covered by $Y$ in $P$.
\end{theorem}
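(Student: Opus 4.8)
The plan is to reduce the whole statement to one structural fact about covering relations in a convex geometry, and then to build the Boolean algebra mechanically from the lower covers of $Y$ using closure under intersection.

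First I would prove the \textbf{key lemma}: in any convex geometry, if $C$ is covered by $Y$, then $Y\setminus C$ is a singleton. The subtle point is that the Extension Property is phrased for the top element $X$ of the \emph{whole} geometry, so it cannot be applied to $Y$ as if $Y$ were an ambient ground set; the crux is therefore to manufacture an ``extension inside $Y$''. To do this I would start from $C$ and iterate the Extension Property, obtaining a chain $C=A_0\subsetneq A_1\subsetneq\cdots\subsetneq A_k=X$ in $P$ with $|A_{j+1}\setminus A_j|=1$ for every $j$ (the chain terminates at $X$ since the sets strictly grow inside a finite ground set), and then intersect this chain with $Y$. Since $P$ is closed under intersection and $Y\in P$, each $B_j:=A_j\cap Y$ lies in $P$; moreover $B_0=C$ (as $C\subseteq Y$), $B_k=Y$, the $B_j$ are weakly increasing, and consecutive terms differ by at most one element because $A_{j+1}=A_j\cup\{\beta_j\}$ forces $B_{j+1}\setminus B_j\subseteq\{\beta_j\}$. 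Taking the first index at which the chain of $B_j$'s strictly grows produces an element $\alpha\in Y\setminus C$ with $C\cup\{\alpha\}\in P$ and $C\subsetneq C\cup\{\alpha\}\subseteq Y$. Because $C$ is covered by $Y$, this forces $C\cup\{\alpha\}=Y$, i.e. $Y\setminus C=\{\alpha\}$.

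With the key lemma in hand, I would list the sets covered by $Y$ as $C_1,\dots,C_m$ and write $C_i=Y\setminus\{y_i\}$; distinct covers give distinct $y_i$, so $X=\bigcap_{i=1}^m C_i=Y\setminus\{y_1,\dots,y_m\}$ and the $y_i$ are exactly the elements of $Y\setminus X$. Here the Intersection Property does the real work in the positive direction: for every $T\subseteq[m]$ one has $\bigcap_{i\in T}C_i=Y\setminus\{y_i:i\in T\}\in P$, so \emph{all} $2^m$ sets of the form $Y\setminus S$ with $S\subseteq\{y_1,\dots,y_m\}$ belong to $P$, and hence to $[X,Y]$. Conversely any $Z\in[X,Y]$ satisfies $Y\setminus Z\subseteq Y\setminus X=\{y_1,\dots,y_m\}$, so $Z$ already has this form. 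Thus $[X,Y]$ consists of exactly these sets, and the map $Z\mapsto Z\setminus X$ is an order isomorphism from $[X,Y]$ onto the Boolean algebra of all subsets of $\{y_1,\dots,y_m\}$, giving $[X,Y]\cong\bftwo^m$.

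The hypothesis $Y\neq\emptyset$ would be used only to guarantee that $Y$ has at least one lower cover, so that $m\ge1$ and the intersection defining $X$ runs over a nonempty family; the strict containment $X\subsetneq Y$ is then automatic from $X\subseteq C_1\subsetneq Y$. I expect the \textbf{main obstacle} to be the key lemma, and in particular the realization that the Extension Property must be transported onto $Y$ rather than used verbatim; the intersection-with-$Y$ trick is precisely what circumvents this, after which everything is bookkeeping driven by closure under intersection. It is worth noting that this route uses only the three defining axioms and never invokes the anti-exchange reformulation, which keeps the argument self-contained.
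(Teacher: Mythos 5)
Your proposal is correct, and it is worth noting that the paper itself gives no proof of this statement at all: Theorem~\ref{thm:BA} is quoted as a result proved in the Edelman--Jamison paper \cite{EdeJam87}, so there is no internal argument to compare against. Your proof is a valid, self-contained derivation from the three set-system axioms. The key lemma (every lower cover $C$ of $Y$ satisfies $|Y\setminus C|=1$) is exactly the right structural fact, and your device for proving it --- run the Extension Property from $C$ up to the ground set to get a saturated chain $A_0\subsetneq\cdots\subsetneq A_k$, then intersect each $A_j$ with $Y$ and look at the first strict growth --- correctly transports the extension axiom into the interval below $Y$; the cover relation then forces $C\cup\{\alpha\}=Y$. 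The rest is bookkeeping, as you say: distinct covers $C_i=Y\setminus\{y_i\}$ give distinct $y_i$, closure under intersection puts every set $Y\setminus S$ with $S\subseteq\{y_1,\dots,y_m\}$ into $P$, and conversely every $Z\in[X,Y]$ has this form since $Y\setminus Z\subseteq Y\setminus X$, so $Z\mapsto Z\setminus X$ is an order isomorphism onto $\bftwo^m$. Your handling of the hypothesis $Y\neq\emptyset$ (it guarantees $m\ge1$, so the intersection defining $X$ is over a nonempty family) is also right. For comparison, the original Edelman--Jamison treatment works in the language of anti-exchange closure operators and meet-distributive lattices, so your argument has the advantage of staying entirely inside the definition used in this paper; in effect you have reproved the standard fact that a convex geometry is graded by cardinality and then read off the Boolean interval from intersection-closure, which is precisely what the cited theorem encapsulates.
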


Now let $Q$ be a finite lattice.  It is natural to ask whether there is a
convex geometry $P$ such that $P=Q$.  The next theorem, proved
in~\cite{EdeJam87}, provides an answer.

\begin{theorem}\label{thm:meet-dist}
  Let $Q$ be a finite lattice.  Then there is a convex geometry $P$ such
  $P$ is isomorphic to the poset $Q$ if and only if for every 
  $y$ in $Q$ with $y\neq 0$, if $x$ is the meet of all elements covered by $y$ 
  in $Q$, then the interval $[x,y]$ in $Q$ is isomorphic to the Boolean algebra 
  $\bftwo^m$, where $m$ is the number of elements of $Q$ that are covered by $y$.
\end{theorem}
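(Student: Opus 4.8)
I would prove the biconditional in two directions, and the forward (necessity) direction is essentially free given the Boolean property. Assume $P$ is a convex geometry with $P$ isomorphic to $Q$; since the stated condition refers only to meets, covers, and intervals, it is invariant under isomorphism, so it suffices to verify it in $P$ itself. For any $Y\in P$ with $Y\neq\emptyset$, let $X$ be the intersection (meet) of all sets covered by $Y$. As $Y$ is nonempty in the finite lattice $P$ it covers at least one set, so $X<Y$ and in particular $X\neq Y$; Theorem~\ref{thm:BA} then gives $[X,Y]\cong\bftwo^m$ with $m$ the number of sets covered by $Y$. This is exactly the asserted condition, so necessity follows immediately.

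For sufficiency, assume $Q$ satisfies the interval condition; the plan is to build a convex geometry on the set $J$ of join-irreducibles of $Q$ (the elements covering exactly one element). I would define $\phi\colon Q\to 2^J$ by $\phi(q)=\{j\in J: j\le q\}$ and set $P=\{\phi(q):q\in Q\}$. Standard finite-lattice facts make $\phi$ an order embedding: it is clearly monotone, and since every element is the join of the join-irreducibles below it, $\phi(p)\subseteq\phi(q)$ forces $p=\bigvee\phi(p)\le\bigvee\phi(q)=q$. Moreover $\phi(0)=\emptyset$, $\phi(1)=J$, and $\phi(p\wedge q)=\phi(p)\cap\phi(q)$ because $j\le p$ and $j\le q$ hold iff $j\le p\wedge q$. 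Thus $P$ is order-isomorphic to $Q$ and already satisfies the base and intersection properties; only the extension property remains to be checked.

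The entire content of sufficiency is therefore the following lemma: if $Q$ satisfies the interval condition, then every covering $q\lessdot q'$ satisfies $|\phi(q')\setminus\phi(q)|=1$. Granting it, the extension property is immediate, since for $q\neq 1$ I may pick any upper cover $q'$ and conclude $\phi(q')=\phi(q)\cup\{\alpha\}$ for a single $\alpha\in J\setminus\phi(q)$, making $P$ a convex geometry. To prove the lemma I would show that $Q$ is ranked with rank function $r(q)=|\phi(q)|$, which is equivalent to $|\phi|$ increasing by exactly one along each cover. The natural route is induction on height, applying the hypothesis at $y=q'$: with $x$ the meet of the lower covers of $q'$, the interval $[x,q']\cong\bftwo^m$ is Boolean and $q$ is one of its $m$ coatoms, so $q'=q\vee a$ and $q\wedge a=x$ for the complementary atom $a$. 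The remaining work is to translate this \emph{local} Boolean structure into the \emph{global} count of join-irreducibles below $q$ versus $q'$ and to single out the one join-irreducible gained across the cover.

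I expect this lemma to be the main obstacle. Any join-irreducible $j\le q'$ with $j\not\le q$ must satisfy $q\vee j=q'$, so the crux is ruling out two distinct such join-irreducibles; in a general lattice a single cover can absorb several incomparable join-irreducibles, and it is precisely meet-distributivity that forbids this. I anticipate the cleanest argument passes through the equivalent ``ranked by $|\phi|$'' characterization, established by induction on height from the Boolean-interval hypothesis: once $r$ equals $|\phi|$, the exactly-one conclusion, and hence the extension property, follow at once.
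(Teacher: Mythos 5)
First, a point of reference: the paper does not prove this theorem at all --- it is quoted from Edelman and Jamison~\cite{EdeJam87} --- so your proposal has to stand on its own rather than be compared to an in-paper argument. Your necessity direction is complete and correct: the condition is isomorphism-invariant, every nonempty element of a finite convex geometry covers at least one set (so $X\neq Y$), and Theorem~\ref{thm:BA} applies verbatim. Your reduction of sufficiency is also correct and is the standard route: with $J$ the join-irreducibles and $\phi(q)=\{j\in J: j\le q\}$, the image contains $\emptyset$ and $J$, is intersection-closed because $\phi(p\wedge q)=\phi(p)\cap\phi(q)$, is order-isomorphic to $Q$, and the extension property follows from the lemma that every cover $q\lessdot q'$ satisfies $|\phi(q')\setminus\phi(q)|=1$.

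The genuine gap is that this lemma --- which you yourself call ``the entire content of sufficiency'' --- is never proved: you state the crux (ruling out two distinct join-irreducibles absorbed by one cover) and then only \emph{anticipate} that an induction on height will settle it. It does, and the missing core is short, so your write-up stops just shy of a proof. Induct on the height of $q'$, and let $x$ be the meet of the lower covers $c_1=q,c_2,\dots,c_m$ of $q'$, so $[x,q']\cong\bftwo^m$. If $m=1$, then $q'$ is join-irreducible and every other join-irreducible below $q'$ lies below the unique lower cover $q$, so $\phi(q')\setminus\phi(q)=\{q'\}$. If $m\ge2$, then $q'$ is \emph{not} join-irreducible, hence $\phi(q')=\bigcup_i\phi(c_i)$. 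Let $a_1,\dots,a_m$ be the atoms of $[x,q']$, indexed so that $a_k\le c_i$ iff $k\neq i$; since covers inside an interval of a lattice are covers in $Q$, the inductive hypothesis applies to all covers strictly below $q'$. It gives $\phi(a_k)=\phi(x)\cup\{j_k\}$, with the $j_k$ pairwise distinct because $\phi(a_k)\cap\phi(a_l)=\phi(a_k\wedge a_l)=\phi(x)$; counting along a maximal chain from $x$ to $c_i$ gives $|\phi(c_i)|=|\phi(x)|+(m-1)$, while $\phi(c_i)\supseteq\phi(x)\cup\{j_k:k\neq i\}$ then forces $\phi(c_i)=\phi(x)\cup\{j_k:k\neq i\}$. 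Hence $\phi(q')=\phi(x)\cup\{j_1,\dots,j_m\}$ and $\phi(q')\setminus\phi(q)=\{j_1\}$, as required. Without an argument of this kind, what you have established is the easy direction plus a correct reduction, not the theorem itself.
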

In the literature, a finite lattice $Q$ is said to be \emph{meet-distributive} if it
satisfies the property in Theorem~\ref{thm:meet-dist}.  Accordingly, the study
of convex geometries can be recast as the study of meet-distributive lattices.
One useful property of these lattices is that they are \emph{graded}, i.e., all 
maximal chains between two comparable elements have the same length.

Motivated by the preceding discussion, we make the 
following definitions.  Let $Q$ be a finite poset, and let $y\in Q$.
We denote by $\dd(y,Q)$ the \textit{down degree} of $y$ in $Q$, i.e.,
$\dd(y,Q)$ is the number of elements of $Q$ that are covered by $y$ in $Q$.  
Of course $\dd(y,Q)=0$ if and only if $y$ is a minimal element of $Q$.
In turn, we let $\maxdd(Q)$ denote the maximum value of $\dd(y,Q)$ taken over all 
elements $y\in Q$.  

When $P$ is a convex geometry, if $d=\maxdd(P)$, then 
Theorem~\ref{thm:BA} now implies that $P$ contains the Boolean algebra $\bftwo^d$.
Therefore, $\dim(P)\ge\vcdim(P)\ge\maxdd(P)$.  The next theorem, proved
in~\cite{BCDK06} shows that this inequality is tight.

\begin{theorem}\label{thm:vcdim=maxdd}
  When $P$ is a convex geometry, $\vcdim(P)=\maxdd(P)$.
\end{theorem}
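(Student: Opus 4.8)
The goal is to prove $\vcdim(P) = \maxdd(P)$ for a convex geometry $P$. The excerpt already gives us one direction essentially for free: if $d = \maxdd(P)$, then by the Boolean property (Theorem~\ref{thm:BA}), $P$ contains a copy of $\bftwo^d$, and since $\vcdim(\bftwo^d) = d$ and $\VC$-dimension is monotone under containment of subfamilies, we get $\vcdim(P) \ge \maxdd(P)$. So the real content is the reverse inequality $\vcdim(P) \le \maxdd(P)$.

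The plan is to prove the contrapositive direction directly from the definitions. Suppose $\vcdim(P) = t$; I want to produce an element $y \in P$ with $\dd(y,P) \ge t$. By definition of $\VC$-dimension, there is a shattered set $\{\alpha_1,\dots,\alpha_t\} \subseteq X$: for every $S \subseteq [t]$ there is a member $A(S) \in P$ whose trace on $\{\alpha_1,\dots,\alpha_t\}$ is exactly $\{\alpha_i : i \in S\}$. The first step is to pass from this unstructured collection of shattering witnesses to a \emph{single interval} of $P$ that carries a Boolean algebra of the right rank. The natural candidate is built from the sets realizing the ``full'' trace: let $Y$ be a minimal member of $P$ whose trace on $\{\alpha_1,\dots,\alpha_t\}$ is all of $\{\alpha_1,\dots,\alpha_t\}$, chosen (using the Intersection Property) to be as small as possible, and let $X_0 = A(\emptyset \text{ within } Y)$ be obtained by taking the intersection inside $Y$ that removes all the $\alpha_i$. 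The idea is that the interval $[X_0, Y]$ in $P$ should be a Boolean algebra containing all $2^t$ trace patterns, and then Theorem~\ref{thm:BA} (read in reverse, via the down-degree count) forces some element in this interval to have down-degree at least $t$.

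Making this precise is where the main work lies, and it is the step I expect to be the obstacle: I need to show that the elements witnessing the shattering can be arranged so that meets and joins taken inside $P$ stay controlled on the coordinates $\alpha_1,\dots,\alpha_t$. The key technical lemma I would aim for is that, after intersecting the shattering witnesses with a suitable fixed set, the $t$ ``coatom-removing'' elements $A([t]\setminus\{i\})$ cover a common meet and their joins generate a sublattice isomorphic to $\bftwo^t$ whose top element $Y$ therefore has down-degree at least $t$. Concretely, since $P$ is meet-distributive and hence graded (as noted after Theorem~\ref{thm:meet-dist}), I can use the grading to track ranks: each removal of a single $\alpha_i$ should drop the rank by one, so that $Y$ sits $t$ steps above the common meet and has at least $t$ lower covers. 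The delicate point is verifying that the Extension Property and Intersection Property together guarantee these intermediate sets actually lie in $P$ and genuinely form covering relations, rather than skipping levels; this is exactly the content that Theorem~\ref{thm:BA} packages, so I would lean on it to convert ``$P$ contains a shattered $t$-set'' into ``some $y$ has $\dd(y,P) \ge t$,'' completing the inequality $\vcdim(P) \le \maxdd(P)$ and hence the theorem.
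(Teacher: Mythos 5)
Your first inequality, $\vcdim(P)\ge\maxdd(P)$, is correct and is exactly what the paper observes just before the theorem. But note that the paper does \emph{not} prove the converse inequality at all: Theorem~\ref{thm:vcdim=maxdd} is quoted from~\cite{BCDK06}. So the entire content of your proposal is your argument that $\vcdim(P)\le\maxdd(P)$, and that argument has a genuine gap --- indeed the step you yourself flag as the obstacle is the whole theorem, and the bridge you propose does not hold as stated. First, the key lemma (that a suitably chosen interval $[X_0,Y]$ is a Boolean algebra of rank $t$ realizing all $2^t$ traces) is false for legitimate instantiations of your construction. Take $P=P(1,3)=\{\emptyset,\{1\},\{2\},\{3\},\{1,2\},\{1,3\},\{1,2,3\}\}$ and the shattered set $T=\{2,3\}$: the minimal $Y$ with full trace is $\{1,2,3\}$, and choosing witnesses $A(\emptyset)=\emptyset$, $A(\{2\})=\{2\}$, $A(\{3\})=\{3\}$ gives $X_0=\emptyset$, so $[X_0,Y]$ is the whole geometry --- seven elements, rank $3$, not Boolean. (Other choices of witnesses happen to work here, but you never specify how to choose them or prove a good choice exists; that is precisely the missing mathematics.) Second, even granting a sublattice isomorphic to $\bftwo^t$ with top $Y$, the inference ``therefore $Y$ has down-degree at least $t$'' is a non sequitur: coatoms of a sublattice need not be covered by $Y$ in $P$. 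In the same example, $\{\emptyset,\{2\},\{3\},\{1,2,3\}\}$ is such a sublattice, yet $\{2\}$ and $\{3\}$ are not lower covers of $\{1,2,3\}$; its actual lower covers are $\{1,2\}$ and $\{1,3\}$, which your construction never produces. A Boolean sublattice is essentially a restatement of shattering; the theorem's content is exactly the passage from shattering to \emph{covering relations}, and your appeal to Theorem~\ref{thm:BA} cannot supply it, because the hypothesis of that theorem is that $X$ is the intersection of the sets covered by $Y$ --- i.e., it presupposes knowledge of the lower covers of $Y$, which is what you are trying to establish. Read ``in reverse'' it is circular.

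What closes the gap is not Theorem~\ref{thm:BA} but the gradedness of $P$ (rank equals cardinality, so every saturated chain between comparable members adds one point at a time). From this one gets the lemma: if $C\in P$, $\alpha\notin C$, and $Z$ is the smallest member of $P$ containing $C\cup\{\alpha\}$, then $Z-\{\alpha\}\in P$. (Proof: in a saturated chain from $C$ to $Z$ some step adds $\alpha$; the resulting member contains $C\cup\{\alpha\}$, hence equals $Z$ by minimality, so the step below it is $Z-\{\alpha\}$.) Now let $T=\{\alpha_1,\dots,\alpha_t\}$ be shattered, let $C_i$ be the smallest member of $P$ containing $T-\{\alpha_i\}$, and let $Z$ be the smallest member containing $T$. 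Shattering gives $\alpha_i\notin C_i$ (since $C_i$ is contained in the witness with trace $T-\{\alpha_i\}$), and the smallest member containing $C_i\cup\{\alpha_i\}$ is $Z$ for every $i$. The lemma then yields $Z-\{\alpha_i\}\in P$ for all $i\in[t]$, so $\dd(Z,P)\ge t$ and $\maxdd(P)\ge\vcdim(P)$. This is, in essence, the argument behind the result cited from~\cite{BCDK06}; your outline would need to be replaced by (or completed into) something of this form.
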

Because some of our proofs make direct use of the diagram and the specific 
value of maximum down degree and for the balance of the paper we make the following choice:
\begin{quote}
From now on we will phrase results in terms of maximum down
degree instead of $\VC$-dimension,
                               asking readers to keep in mind that when working with convex geometries,
maximum down degree is \emph{exactly} the same as $\VC$-dimension.  
\end{quote}

Dually, when $Q$ is a finite poset, and $x\in Q$, we define the
\textit{up degree} of $x$ in $Q$, denoted $\ud(x,Q)$ as the 
number of elements of $Q$ that cover $x$.  Also, $\maxud(X)$ is the maximum value
of $\ud(x,Q)$ taken over all $x\in Q$.  As we will soon see, there is
no bound on the value of maximum up degree, even when maximum down degree is~$2$.

We finish with a quick observation announced in the introduction:
\begin{observation}\label{obs:sevcdim} Let $P$ be a convex geometry, then
 $\se(P)\ge\maxdd(P)$ unless $\maxdd(P)=2$
and $\se(P)=1$.
\end{observation}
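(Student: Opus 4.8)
The plan is to reduce the claim to the single fact, already recorded in the text, that a convex geometry $P$ with $d=\maxdd(P)$ contains the Boolean algebra $\bftwo^d$ as a subposet (this is the consequence of Theorem~\ref{thm:BA} noted just before Theorem~\ref{thm:vcdim=maxdd}). Since the standard example number is monotone under containment, it suffices to locate a copy of $S_d$ inside $\bftwo^d$ whenever $d\ge3$; this gives $\se(P)\ge d=\maxdd(P)$, and only the cases $d\le2$ would then remain.

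First I would dispose of the small cases. For a convex geometry with non-empty ground set the top element $X$ covers at least one set, so $\maxdd(P)\ge1$ always. If $d=1$, then $\se(P)\ge1=d$ by the convention that $\se(P)\ge1$ for every poset, and there is nothing to prove. If $d=2$, then either $\se(P)\ge2=d$, so the inequality holds outright, or else $\se(P)=1$, which is precisely the exceptional configuration permitted by the statement; thus $d=2$ needs no further argument.

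The substance is the case $d\ge3$. Here I would exhibit $S_d$ explicitly in $\bftwo^d$ by taking its minimal elements to be the $d$ singletons $a_i=\{i\}$ and its maximal elements to be the $d$ co-singletons $b_j=[d]\setminus\{j\}$. One checks that $a_i\subseteq b_j$ if and only if $i\neq j$, that distinct singletons (respectively distinct co-singletons) are incomparable, and --- crucially, using $d\ge3$ --- that no singleton equals a co-singleton, so these $2d$ sets are distinct and induce exactly $S_d$. This is the construction already cited for Boolean algebras in Section~\ref{sec:dimension-concepts}. Combining it with the containment of $\bftwo^d$ in $P$ and the monotonicity of $\se$ yields $\se(P)\ge d$.

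I expect the only delicate point to be the bookkeeping at the boundary rather than any genuine difficulty: one must notice that the singleton/co-singleton construction degenerates exactly at $d=2$, where $\{i\}=[2]\setminus\{j\}$ for $i\neq j$, so that $\bftwo^2$ contains no copy of $S_2$. This degeneracy is what forces the lone exception $\maxdd(P)=2$, $\se(P)=1$ in the statement and shows that it cannot be removed.
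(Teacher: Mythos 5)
Your proposal is correct and follows essentially the same route as the paper: invoke Theorem~\ref{thm:BA} to find $\bftwo^d$ inside $P$ when $d=\maxdd(P)\ge3$, then use $\se(\bftwo^d)=d$ (via the singleton/co-singleton copy of $S_d$ already noted in Section~\ref{sec:dimension-concepts}) and monotonicity of $\se$ under subposets. Your explicit treatment of the cases $d\le2$ and of the degeneracy at $d=2$ is just a fuller write-up of what the paper leaves implicit.
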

\begin{proof}
 If $\maxdd(P)=d\geq 3$, then by Theorem~\ref{thm:BA} there is a Boolean algebra $\mathbf{2}^d$ contained in $P$. Hence, by $d=\se(\mathbf{2}^d)\leq \se(P)$. 
\end{proof}

\subsection{Convex Dimension}

The second dimension parameter for convex geometries borrows from the
set up for (Dushnik-Miller) dimension.
When $X$ is the ground set of a convex geometry $P$, $|X|=n$, and 
$L=\alpha_1<\dots<\alpha_n$ is a linear order on $X$, we say $L$ is 
\textit{compatible} when $\{\alpha_1,\dots,\alpha_i\}$ is a set in $P$ for each $i\in[n]$.  
There is a natural $1$--$1$ correspondence between compatible linear orders 
on $X$ and maximal chains in $P$.

Let $X$ be a finite set, let
$t\ge1$, and let $(P_1,\dots,P_t)$ be a sequence of convex geometries
each of which has ground set $X$. 
Define a family $P$ of subsets of $X$ by:
\[
  P=\{A_1\cap\dots\cap A_t:A_i\in P_i \text{ for all $i\in[t]\}$.}
\]
Then $P$ is a convex geometry, and we denote this by writing
$P=P_1\vee\dots\vee P_t$.  Furthermore, given a convex geometry $P$, if 
we define the sequence $(P_1,\dots,P_t)$ by taking the linear geometries 
associated with the maximal chains in $P$, then
$P=P_1\vee\dots\vee P_t$.  These observations give rise to the
following definitions.

Let $P$ be a convex geometry, and let $X$ be the ground set of $P$. 
A sequence $(P_1,\dots,P_t)$ of linear geometries, each with ground set $X$, is
called a \textit{convex realizer of $P$} if $P=P_1\vee\dots\vee P_t$.
The \textit{convex dimension} of $P$,
denoted $\cdim(P)$, is the least positive integer $t$ for which $P$ has
a convex realizer $(P_1,\dots,P_t)$ of size~$t$.

We pause to make an elementary observation.  Let $n\ge2$, let
$L_1=1<2<\dots<n$, and $L_2=n<\dots<2<1$.  If $Q_1$ and $Q_2$ are the
linear geometries with ground set $[n]$ determined by $L_1$ and $L_2$,
respectively, and $P=Q_1\vee Q_2$, then $\cdim(P)=\dim(P)=\maxdd(P)=2$,
while $\maxud(P)=n$.

In a finite lattice $Q$, an element $y$ that satisfies $\ud(y,Q)=1$ is called
a \emph{meet-irreducible element}.  This terminology reflects the property that
if $y=w\wedge z$, then one of $w$ and $z$ must be $y$.  
The following theorem, which provides a \emph{very} useful alternative
definition of convex dimension, is proved in~\cite{EdeJam87}.  

\begin{theorem}\label{thm:cdim-width}
  Let $P$ be a convex geometry.  Then $\cdim(P)$ is the width of the
  subposet of $P$ determined by the meet-irreducible elements of $P$.
\end{theorem}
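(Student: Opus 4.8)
The plan is to prove Theorem~\ref{thm:cdim-width} by showing the two inequalities $\cdim(P) \le \width(M)$ and $\cdim(P) \ge \width(M)$ separately, where $M$ denotes the subposet of $P$ consisting of the meet-irreducible elements. The conceptual bridge in both directions is the dictionary between compatible linear orders and maximal chains: each linear geometry $P_i$ in a convex realizer corresponds to a maximal chain $C_i$ in $P$, and the requirement $P = P_1 \vee \dots \vee P_t$ says exactly that every set in $P$ is an intersection of sets drawn one from each chain $C_i$. The key observation to extract first is that a set $A \in P$ can be written as such an intersection if and only if every meet-irreducible element $y \in M$ with $A \le_P y$ is ``witnessed'' by some chain $C_i$ passing through $y$; this is because in a meet-distributive lattice, every element is the meet of the meet-irreducibles lying above it.

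For the upper bound $\cdim(P) \le \width(M)$, I would start from an antichain cover of $M$ into $w = \width(M)$ chains, which exists by Dilworth's theorem. The task is to convert each chain of meet-irreducibles into a single maximal chain of $P$ (equivalently, a compatible linear order) that passes through all the meet-irreducibles in that chain, and then verify that the resulting $w$ linear geometries form a convex realizer. Extending a chain of comparable meet-irreducibles to a maximal chain of the lattice is routine; the substance is checking that every $A \in P$ is recovered as the intersection over the $w$ chains. Here I would use that for each meet-irreducible $y \not\ge_P A$, we can arrange the corresponding maximal chain to ``cut below'' $y$, so that the intersection picks up no spurious upper bound, while all $y \ge_P A$ are retained; meet-distributivity then forces the intersection to equal $A$ exactly.

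For the lower bound $\cdim(P) \ge \width(M)$, I would argue that each linear geometry $P_i$ in an optimal convex realizer can ``cover'' at most a chain's worth of meet-irreducibles, so that $\cdim(P)$ realizers are needed to cover an antichain in $M$. Concretely, fix an antichain $\{y_1, \dots, y_w\}$ of meet-irreducibles with $w = \width(M)$. Since each $y_j$ is meet-irreducible, it is \emph{not} the intersection of other elements strictly above it, so in any expression $y_j = A_1 \cap \dots \cap A_t$ with $A_i \in P_i$, some factor must equal $y_j$ itself; this means the maximal chain $C_i$ corresponding to that realizer passes through $y_j$. The crux is then to show that a single maximal chain cannot pass through two incomparable meet-irreducibles $y_j, y_{j'}$ — which is immediate since a chain is totally ordered — so the $w$ incomparable meet-irreducibles require $w$ distinct realizers.

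The main obstacle I anticipate is the upper-bound direction, specifically the simultaneous construction of the $w$ maximal chains: I need each chain to pass through \emph{all} the meet-irreducibles assigned to it while \emph{collectively} ensuring that for every $A \in P$ and every meet-irreducible $y \not\ge_P A$, at least one chain avoids placing $y$ above $A$ in the resulting intersection. Making this work uniformly over all $A$ simultaneously is where the meet-distributive (graded) structure from Theorem~\ref{thm:meet-dist} must be deployed carefully, rather than treating each $A$ in isolation. I expect the cleanest route is to phrase the whole correspondence in terms of which meet-irreducibles lie above a given set and to reduce both inequalities to the single combinatorial fact that covering $\Inc$-type obstructions among meet-irreducibles is governed precisely by the width of $M$.
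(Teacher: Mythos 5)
You should know at the outset that the paper itself does not prove Theorem~\ref{thm:cdim-width}: it is quoted as a result of Edelman and Jamison~\cite{EdeJam87}. So your attempt can only be judged on its own merits. Your overall architecture (Dilworth decomposition for the upper bound, meet-irreducibility forcing chain membership for the lower bound) is the right one, and your lower bound is essentially complete: since meets in $P$ are intersections, any expression $y = A_1\cap\dots\cap A_t$ of a meet-irreducible $y$ with factors containing $y$ must have a factor equal to $y$ (any factor strictly containing $y$ contains the unique upper cover of $y$), so $y$ lies on the corresponding chain, and incomparable meet-irreducibles force distinct chains. The only thing you assert without justification there is the dictionary itself, namely that every linear geometry $P_i$ of a convex realizer \emph{is} a maximal chain of $P$; this needs a sentence, but it is one line: each $A_i\in P_i$ satisfies $A_i = A_i\cap X\cap\dots\cap X \in P_1\vee\dots\vee P_t = P$. (Also, your opening ``key observation'' is stated too strongly: the per-element ``only if'' is false, e.g.\ any $A$ lying on one of the chains is trivially an intersection regardless of which meet-irreducibles are covered. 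Only the ``if'' direction is needed, and it is true.)

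The genuine gap is the upper bound, and you have misdiagnosed where the difficulty lies. You propose to ``arrange'' each maximal chain so that it ``cuts below'' every meet-irreducible $y\not\ge_P A$, and you correctly sense that doing this uniformly over all $A$ is problematic --- indeed it is impossible in general, since different $A$ impose conflicting demands on a single fixed chain. But no arrangement is needed: \emph{any} extension works. Take a Dilworth decomposition of $M$ into $w=\width(M)$ chains $D_1,\dots,D_w$, extend each $D_j$ arbitrarily to a maximal chain $C_j$ of $P$, and for $A\in P$ let $B_j$ be the minimum element of $C_j$ containing $A$ (well defined, since $X\in C_j$ and $C_j$ is a chain). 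Clearly $A\subseteq\bigcap_j B_j$. Conversely, every meet-irreducible $y\supseteq A$ lies in some $C_j$, and minimality of $B_j$ gives $B_j\subseteq y$; hence
\[
  \bigcap_{j\in[w]} B_j \ \subseteq\ \bigcap\{\,y\in M : y\supseteq A\,\} \ =\ A,
\]
where the last equality is the standard fact --- valid in every finite lattice, with meet equal to intersection here --- that each element is the meet of the meet-irreducibles above it. This is exactly the fact you cited in your first paragraph but then failed to deploy. So $\bigcap_j B_j = A$ for every $A\in P$, the $w$ chains form a convex realizer, and $\cdim(P)\le\width(M)$. With this replacement your outline becomes a complete proof; the ``spurious upper bound'' concern and the appeal to a careful simultaneous construction should simply be deleted.
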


Next, we present a brief series of elementary results that support more substantive
arguments to follow. 

\begin{proposition}\label{pro:crit-pair-dd-ud}
  Let $Q$ be a finite lattice. If $(x,y)\in\crit(Q)$,
  then $\dd(x,Q)=\ud(y,Q)=1$.
\end{proposition}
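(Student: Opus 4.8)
The plan is to unwind the definition of a critical pair and then exploit the lattice operations of $Q$. Writing the critical pair as $(x,y)$, the defining conditions say that every element strictly below $x$ lies strictly below $y$, and every element strictly above $y$ lies strictly above $x$; moreover $x\parallel_Q y$. Since $x$ is incomparable to $y$, it cannot be the minimum $0$ of $Q$ (as $0$ is comparable to everything), so $x$ is not minimal and hence covers at least one element, giving $\dd(x,Q)\ge1$; dually $y$ is not the maximum and hence $\ud(y,Q)\ge1$. It therefore remains to establish the reverse inequalities $\dd(x,Q)\le1$ and $\ud(y,Q)\le1$.

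For the down degree I would argue by contradiction. Suppose $x$ covers two distinct elements $u_1$ and $u_2$. Both satisfy $u_i<_Q x$, so their join satisfies $u_1\vee u_2\le_Q x$; on the other hand $u_1\vee u_2$ lies strictly above each $u_i$, and since $x$ covers $u_1$ there is nothing strictly between $u_1$ and $x$, which forces $u_1\vee u_2=x$. Now the first condition of the critical pair gives $u_1<_Q y$ and $u_2<_Q y$, so $y$ is an upper bound of $\{u_1,u_2\}$ and hence $x=u_1\vee u_2\le_Q y$. This contradicts $x\parallel_Q y$, so $\dd(x,Q)=1$.

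The argument for the up degree is the order-theoretic dual, with meets replacing joins. Assuming $y$ is covered by two distinct elements $v_1,v_2$, the meet $v_1\wedge v_2$ lies above $y$ and strictly below each $v_i$, and the covering relation then forces $v_1\wedge v_2=y$; the second condition of the critical pair gives $x<_Q v_1$ and $x<_Q v_2$, so $x\le_Q v_1\wedge v_2=y$, again contradicting $x\parallel_Q y$, whence $\ud(y,Q)=1$. I expect the only point needing genuine care to be the passage from a covering relation to the equalities $u_1\vee u_2=x$ and $v_1\wedge v_2=y$: this is where the hypothesis that $Q$ is a lattice is actually used, since it guarantees the relevant join and meet exist, and one must check that this join (resp. meet) sits strictly above (resp. below) each covered (resp. covering) element so that the covering relation pins it down. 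Everything else is a direct translation of the two critical-pair conditions into the incomparability $x\parallel_Q y$.
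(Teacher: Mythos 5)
Your proof is correct and follows essentially the same route as the paper's: a proof by contradiction that forms the join of two distinct elements covered by $x$, uses the critical-pair condition to make $y$ an upper bound of them, and invokes the lattice structure, with the dual argument handling $\ud(y,Q)$. The only cosmetic differences are that you pin down $u_1\vee u_2=x$ via the covering relation and then contradict $x\parallel_Q y$, whereas the paper bounds $w\vee z\le_Q x\wedge y<_Q x$ and contradicts the covering relation instead; you also make explicit the observation (left implicit in the paper) that $\dd(x,Q)\ge1$ and $\ud(y,Q)\ge1$ because incomparability prevents $x$ and $y$ from being the zero and the one of $Q$.
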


\begin{proof}
  We prove that $\dd(x,Q)=1$.  The argument to show that $\ud(y,Q)=1$ is
  dual.  Suppose to the contrary that $\dd(x,Q)\ge2$.
  Then there are distinct elements $w$ and $z$ of $Q$ such that $x$ covers both 
  $w$ and $z$ in $Q$.
  Since $(x,y)$ is a critical pair, and $w<_Q x$, we know $w<_Q y$.
  Similarly, we have $z<_Q x$, so $z<_Q y$.  Since $Q$ is a lattice
  and $\{x,y\}$ is an antichain, we know that $\{w,z\}\leq_Q w\vee z\leq_Q x \wedge y <_Q\{x,y\}$.
  However, these inequalities imply that neither of $w$ and $z$ is covered by $x$.
  The contradiction completes the proof.
\end{proof}

When $Q$ is a finite lattice, and $y$ is a meet-irreducible
element of $Q$, we let $\uc(y,Q)$ denote the unique element of $Q$ that covers~$y$.
An element $x$ of $Q$ is called a \textit{join-irreducible element of $Q$} if
$\dd(x,Q)=1$.  In this case, we let $\dc(x,Q)$ denote the unique element of $Q$
covered by $x$.  The next result shows that in a convex geometry $P$, there is a 
natural $1$--$1$ correspondence between the set of meet-irreducible elements of 
$P$ and the set of critical pairs of $P$.

\begin{proposition}\label{pro:J-critP}
  Let $P$ be a convex geometry, let $B$ be a meet-irreducible element of $P$, let $Y=
  \uc(B,P)$, and let $\{\alpha\}=Y-B$.  Then the following statements are
  equivalent:
  \begin{enumerate}
    \item $A=\cap\{U\in P:\alpha\in U\}$.
    \item $(A,B)\in\crit(P)$.
  \end{enumerate}
\end{proposition}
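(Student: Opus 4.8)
The plan is to prove the two implications separately, after first recording the structural facts both directions rest on. Since convex geometries are graded by cardinality, the cover $Y=\uc(B,P)$ satisfies $|Y|=|B|+1$, which is precisely what lets us write $\{\alpha\}=Y-B$. Set $A^*:=\cap\{U\in P:\alpha\in U\}$. I would first note that $A^*$ is a well-defined member of $P$ (Intersection Property), that $\alpha\in A^*$, that $A^*$ is the unique $\subseteq$-minimal set of $P$ containing $\alpha$, and that $A^*\subseteq Y$ (because $\alpha\in Y\in P$); consequently $A^*\subseteq B\cup\{\alpha\}$, so $A^*=(A^*\cap B)\cup\{\alpha\}$ with $A^*\cap B\in P$. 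I would also record one lattice fact: since $B$ is meet-irreducible with unique cover $Y$, every element strictly above $B$ lies above $Y$ (the interval $[B,y]$ has an atom, which must be the unique cover $Y$); in set terms, $B\subsetneq y\in P$ forces $Y\subseteq y$, and in particular $\alpha\in y$.

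For $(2)\Rightarrow(1)$, assume $(A,B)\in\crit(P)$. By Proposition~\ref{pro:crit-pair-dd-ud}, $A$ is join-irreducible, so it has a unique lower cover $A^-=\dc(A,P)$ with $A=A^-\cup\{\beta\}$ for a single element $\beta$. The first critical-pair condition applied to $A^-<_P A$ gives $A^-\subsetneq B$; the second condition applied to $B<_P Y$ gives $A\subsetneq Y=B\cup\{\alpha\}$, whence $A-B\subseteq\{\alpha\}$. Since $\beta\notin B$ (otherwise $A=A^-\cup\{\beta\}\subseteq B$, contradicting incomparability), we get $\beta=\alpha$, so $\alpha\in A$ and minimality of $A^*$ yields $A^*\subseteq A$. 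Finally $A^*\subsetneq A$ is impossible, for then the first critical-pair condition would force $A^*\subsetneq B$, contradicting $\alpha\in A^*\setminus B$. Hence $A=A^*$. This direction is clean and uses no special hypotheses.

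For $(1)\Rightarrow(2)$, assume $A=A^*$. Because $\alpha\in A\setminus B$ we immediately have $A\not\subseteq B$. The two critical-pair conditions then follow from the preliminaries: if $x\subsetneq A^*$, minimality forces $\alpha\notin x$, so $x\subseteq A^*\setminus\{\alpha\}\subseteq B$; and if $B\subsetneq y$, then $\alpha\in y$ while $A^*\cap B\subseteq B\subseteq y$, so $A^*\subseteq y$. It remains to supply the strict inequalities and the other half of incomparability, namely $x\neq B$, $A^*\neq y$, and $B\not\subseteq A^*$.

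The main obstacle is exactly this last incomparability, $B\not\subseteq A^*$. It is the one genuinely delicate point, because $B\subseteq A^*$ together with $A^*\subseteq B\cup\{\alpha\}$ forces $A^*=Y$ — the degenerate situation in which the closure of $\alpha$ is all of $Y$ and $B$ is comparable to $A^*$. I therefore expect to hinge the whole forward direction on the condition $A^*\neq Y$ (equivalently $B\not\subseteq A^*$): under it, $x=B$ would give $B\subsetneq A^*$ and $A^*=y$ would give $Y\subseteq A^*\subseteq Y$, both contradictions, so the two inclusions above become strict and $(A,B)\in\crit(P)$ follows. Pinning down how this degenerate case is excluded, so that every meet-irreducible $B$ really does produce a critical pair, is where I would concentrate the most attention, since it is the only place the argument is not purely formal.
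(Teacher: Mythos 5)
Your direction $(2)\Rightarrow(1)$ is correct; it takes a mildly different route from the paper's, which argues directly from the two critical-pair conditions (condition~(2) applied to $B<_P Y$ gives $A\subseteq Y$, incomparability then gives $\alpha\in A$, and condition~(1) rules out any $U\subsetneq A$ with $\alpha\in U$) and never needs Proposition~\ref{pro:crit-pair-dd-ud} or the lower cover of $A$. Both arguments are sound.

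The obstacle you isolated in $(1)\Rightarrow(2)$ --- ruling out $B\subseteq A$, i.e.\ $A=Y$ --- is not a detail you failed to find: it cannot be ruled out, and the implication as stated is false. Take $X=\{1,2,3\}$ and $P=\{\emptyset,\{1\},\{2\},\{1,2\},X\}$; the base, intersection, and extension properties are immediate, so $P$ is a convex geometry (and not a chain, so $\crit(P)$ is defined). Here $B=\{1,2\}$ is meet-irreducible with $Y=\uc(B,P)=X$ and $\alpha=3$, but the only member of $P$ containing $3$ is $X$ itself, so $A=\cap\{U\in P:3\in U\}=X=Y$. Statement~(1) holds, yet $B\subsetneq A$, so $(A,B)\notin\Inc(P)$ and statement~(2) fails; in fact $B$ lies in no critical pair whatsoever, so the intended bijection between meet-irreducible elements and critical pairs also breaks for this $P$. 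The paper's own proof of this direction has precisely the hole you anticipated: it establishes $A\not\subseteq B$, $A\subseteq Y$, and the two critical-pair conditions only in non-strict form, but never verifies $B\not\subseteq A$. The proposition becomes correct under the extra hypothesis you proposed ($A\neq Y$; equivalently, some member of $P$ contains $\alpha$ but misses part of $B$), and with that hypothesis your argument does complete the proof. That hypothesis holds in every application in the paper --- for instance, in $P(k,n)$ all singletons are members, so $A=\{\alpha\}$, while meet-irreducible elements are non-empty and avoid $\alpha$ --- which is presumably why the gap went unnoticed. So your proof is as complete as the statement allows, and your instinct about where the genuine difficulty lies was exactly right.
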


\begin{proof}
  Assume that  $A=\cap\{U\in P:\alpha\in U\}$.  We show that $(A,B)$ is
  a critical pair.  First, since $\alpha\in A$, and $\alpha\not\in B$,
  we know $A\not\subset B$.  If there is an element $\beta\in A-Y$, then
  $A\cap Y\subsetneq A$, which contradicts the definition of $A$.  We conclude
  that $A\subset Y$.  Furthermore, if $V\subsetneq A$ for some $V\in P$, then $\alpha\not\in V$.  
  Since $Y=B\cup\{\alpha\}$, we conclude that $V\subset B$.  Now
  suppose that $B$ is a proper subset of some $W\in P$.  Then since $B$ is a meet-irreducible element, we have $Y\subseteq W$.  This
  implies $A\subset W$.  These observations imply that $(A,B)$ is a critical
  pair.

  Now suppose that $(A,B)$ is a critical pair.  We show that
  $A=\cap\{U\in P:\alpha\in U\}$.  Since $B\subsetneq Y$, we must have
  $A\subseteq Y$.  However, $Y=B\cup\{\alpha\}$, and we conclude that
  $\alpha\in A$.   Now suppose that there is a set $U\in P$ with
  $\alpha\in U$ and $U\subsetneq A$.  Then $U\not<_P B$, which contradicts
  the assumption that $(A,B)$ is a critical pair.  We conclude
  that $A=\cap\{U\in P:\alpha\in U\}$.
\end{proof}

\begin{proposition}\label{pro:cg-se}
  Let $Q$ be a finite poset.  If $t=\se(Q)\ge2$, then $Q$ contains
  a copy of the standard example $S_t$ labeled such that $(a_i,b_i)\in\crit(Q)$
  for all $i\in[t]$.
\end{proposition}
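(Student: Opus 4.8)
The plan is to start from a copy of the standard example $S_t$ guaranteed by $\se(Q)=t\ge2$, with minimal elements $a_1,\dots,a_t$ and maximal elements $b_1,\dots,b_t$ satisfying $a_i<_Q b_j$ exactly when $i\neq j$, and then to \emph{replace} each pair $(a_i,b_i)$ by a genuinely critical pair without destroying the standard-example structure. The natural move is to push each $a_i$ \emph{down} and each $b_i$ \emph{up} to canonical witnesses. Concretely, for each $i$ I would consider the relation $a_i\parallel_Q b_i$ and try to replace $a_i$ by a minimal element $a_i'$ with $a_i'\le_Q a_i$ chosen so that $\{x:x<_Q a_i'\}$ is as large as possible relative to the down-set of $b_i$, and dually replace $b_i$ by a maximal $b_i'\ge_Q b_i$.

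First I would recall the characterization of critical pairs: $(a,b)\in\crit(Q)$ means $a\parallel_Q b$ together with the two closure conditions $D(a)\subseteq D(b)$ and $U(b)\subseteq U(a)$, where $D(\cdot)$ and $U(\cdot)$ denote the strict down-set and up-set. The key observation is that for the given $S_t$, each pair $(a_i,b_i)$ is already incomparable, and the standard-example relations force $a_j<_Q b_i$ for all $j\neq i$; the obstruction to $(a_i,b_i)$ being critical is only the possible failure of the two closure conditions. So the second step is to show that one may select, among all incomparable pairs $(a,b)$ with $a\le_Q a_i$ and $b\ge_Q b_i$ that still ``see'' the other $b_j$'s and $a_j$'s correctly, a pair that is \emph{minimal} for $a$ and \emph{maximal} for $b$ in the incomparability order, and that such a locally extremal incomparable pair is automatically critical.

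The main technical step, and the one I expect to be the crux, is verifying that this replacement can be done \emph{simultaneously} for all $i\in[t]$ while preserving the $S_t$ pattern: I need $a_i'<_Q b_j'$ for all $j\neq i$ and $a_i'\parallel_Q b_i'$. The upward/downward pushes could a priori collide — for instance, pushing $b_i$ up to $b_i'$ might accidentally put $a_i'\le_Q b_i'$ (collapsing the incomparability) or might fail to stay above the required $a_j'$. I would handle this by choosing the $a_i'$ first as minimal elements below $a_i$ satisfying $a_i'\not\le_Q b_i$ (such exist since $a_i\not\le_Q b_i$), and then choosing each $b_i'$ as a maximal element above $b_i$ with $a_i'\not\le_Q b_i'$; minimality/maximality then forces the closure conditions, giving $(a_i',b_i')\in\crit(Q)$. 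The surviving relations $a_i'<_Q b_j'$ for $j\neq i$ follow from $a_i'\le_Q a_i<_Q b_j\le_Q b_j'$.

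A subtlety I would flag is ensuring the resulting $2t$ elements are still \emph{distinct} and form a genuine $S_t$: collisions among the $a_i'$ or among the $b_j'$, or an accidental comparability $b_i'\ge_Q a_j'$ forcing $i=j$ patterns, must be ruled out. Here I would lean on the fact that $Q$ is a finite lattice (indeed a convex geometry, hence graded and meet-distributive), which tightly controls covering structure; Proposition~\ref{pro:crit-pair-dd-ud} shows each critical pair has down-degree and up-degree one, and this rigidity should prevent unwanted collapses. If a direct simultaneous argument proves awkward, the fallback is to argue that the $2t$ chosen extremal elements cannot coincide because any coincidence would contradict the strictness of the alternating structure underlying a standard example, thereby recovering a clean copy of $S_t$ whose defining pairs are all critical.
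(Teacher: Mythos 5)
Your construction is genuinely different from the paper's proof, and its core is sound. The paper disposes of the whole proposition with a single global extremal choice: among all copies of $S_t$ contained in $Q$, take one minimizing $\sum_{i\in[t]}|D_Q[a_i]|+|U_Q[b_i]|$, and criticality of every pair $(a_i,b_i)$ is then immediate (if some pair failed, a witness $x<_Q a_i$ with $x\not<_Q b_i$, or the dual witness above $b_i$, could be swapped in to strictly decrease the sum). You instead fix one copy and perform elementwise replacements: $a_i'$ minimal in $\{z\le_Q a_i: z\not\le_Q b_i\}$, then $b_i'$ maximal in $\{w\ge_Q b_i: a_i'\not\le_Q w\}$. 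Your verification that minimality and maximality force the two closure conditions of a critical pair is correct, as is the derivation of the cross relations $a_i'<_Q b_j'$ for $i\neq j$. Your route buys an explicit construction; the paper's buys simultaneous handling of all $t$ pairs with essentially no bookkeeping.

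However, your treatment of the step you yourself flag as the crux --- pairwise incomparability and distinctness of the new elements --- is wrong as written. You propose to ``lean on the fact that $Q$ is a finite lattice (indeed a convex geometry, hence graded and meet-distributive)'' and on Proposition~\ref{pro:crit-pair-dd-ud}. But the proposition is stated for an \emph{arbitrary} finite poset $Q$: no lattice structure, gradedness, or meet-distributivity is available, and Proposition~\ref{pro:crit-pair-dd-ud} concerns lattices, so it cannot be invoked; the fallback appeal to ``strictness of the alternating structure'' is not an argument. Fortunately, the facts you need follow in one line from your own construction, with no extra hypotheses. If $a_i'\le_Q a_j'$ with $i\neq j$, then $a_i'\le_Q a_j'\le_Q a_j<_Q b_i$, contradicting $a_i'\not\le_Q b_i$; symmetrically, $a_j'\le_Q a_i'$ contradicts $a_j'\not\le_Q b_j$. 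If $b_i'\le_Q b_j'$ with $i\neq j$, then $a_j'\le_Q a_j<_Q b_i\le_Q b_i'\le_Q b_j'$, contradicting $a_j'\not\le_Q b_j'$; symmetrically for $b_j'\le_Q b_i'$. Finally, $a_i'\parallel_Q b_i'$ holds because $a_i'\not\le_Q b_i'$ by choice, while $b_i'\le_Q a_i'$ would give $b_i\le_Q a_i$. These incomparabilities, together with $a_i'<_Q b_j'$ for $i\neq j$, also give distinctness of all $2t$ elements. With this substitution your proof is complete and, like the paper's, valid for every finite poset.
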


\begin{proof}
  Of all copies of the standard example $S_t$ contained in $Q$,  choose one
  for which the following sum is minimum:
  \[
    \sum_{i\in[t]} |D_Q[a_i]|+|U_Q[b_i]|.
  \]
  Clearly, $(a_i,b_i)$ is a critical pair in $Q$, for each $i\in[t]$.
\end{proof}

\begin{proposition}\label{pro:cdim-dim}
  Let $P$ be a convex geometry.  Then 
  \[
    \cdim(P)\ge\dim(P)\ge\max\{\maxdd(P),\se(P),\bdim(P),\fdim(P),\ldim(B)\}.
  \]
\end{proposition}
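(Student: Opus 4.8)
The plan is to treat the two inequality chains separately, since the right-hand maximum merely collects facts already established in Section~\ref{sec:dimension-concepts}, whereas the left-hand inequality $\cdim(P)\ge\dim(P)$ carries all the real content.

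For $\dim(P)\ge\max\{\maxdd(P),\se(P),\bdim(P),\fdim(P),\ldim(P)\}$ I would simply assemble the bounds proved earlier. The inequality $\dim(P)\ge\se(P)$ holds for every poset; the inequalities $\dim(P)\ge\bdim(P)$, $\dim(P)\ge\ldim(P)$, and $\dim(P)\ge\fdim(P)$ each follow from the fact that an ordinary realizer is, respectively, a Boolean realizer (with a single accepting query string), a local realizer, and a fractional realizer supported on a $0$--$1$ function; and $\dim(P)\ge\maxdd(P)$ follows from Theorem~\ref{thm:vcdim=maxdd} together with $\dim(P)\ge\vcdim(P)$. So this half needs no new argument.

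For the main inequality my plan is to convert an arbitrary convex realizer into an ordinary realizer of the same size (the case where $P$ is a chain being trivial, as then $\dim(P)=1$). Let $(P_1,\dots,P_t)$ be a convex realizer, so $P=\{A_1\cap\dots\cap A_t:A_i\in P_i\}$, and for each $i$ and each $A\in P$ let $\phi_i(A)$ be the smallest member of $P_i$ containing $A$, which exists since $P_i$ is a chain under inclusion containing the ground set. The first key step is the identity $A=\phi_1(A)\cap\dots\cap\phi_t(A)$ for every $A\in P$: writing $A=A_1\cap\dots\cap A_t$ with $A_i\in P_i$ gives $\phi_i(A)\subseteq A_i$ and hence $\bigcap_i\phi_i(A)\subseteq A$, while $A\subseteq\phi_i(A)$ for all $i$ gives the reverse inclusion. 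The second step sets $r_i(A)=|\phi_i(A)|$; since $\phi_i$ is monotone, $r_i$ is a monotone integer-valued function on $(P,\subseteq)$, so ordering $P$ lexicographically first by $r_i(A)$ and then, among sets of equal $r_i$-value, by a fixed reference linear extension $L^*$ of $P$, produces a linear extension $\hat L_i$ of $P$ with the property that $r_i(A)<r_i(B)$ implies $A<_{\hat L_i}B$. The final step verifies that $(\hat L_1,\dots,\hat L_t)$ is a realizer, for which, by the reformulation of realizers in terms of reversing incomparable pairs recalled in Section~\ref{sec:dimension-concepts}, it suffices to reverse each $(A,B)\in\Inc(P)$: since $A\not\subseteq B$, Step~1 forces $\phi_i(A)\not\subseteq\phi_i(B)$ for some $i$, and because $P_i$ is a chain this means $\phi_i(B)\subsetneq\phi_i(A)$, i.e.\ $r_i(B)<r_i(A)$, whence $B<_{\hat L_i}A$. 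Thus $\dim(P)\le t$, and minimizing over convex realizers yields $\dim(P)\le\cdim(P)$.

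The main obstacle is the conceptual content of Step~1: recognizing that the canonical representatives $\phi_i(A)$ reconstruct $A$ by intersection is exactly what lets a failure of inclusion $A\not\subseteq B$ be localized to a single coordinate and thereby reversed in one linear extension. Once that identity is secured, the passage to linear extensions and the verification of the realizer condition are routine; the only point requiring mild care is the tie-breaking that guarantees each $\hat L_i$ is an honest linear extension of $P$ rather than merely a weak order.
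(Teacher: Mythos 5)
Your proof is correct, but for the main inequality $\cdim(P)\ge\dim(P)$ you take a genuinely different route from the paper. You work directly from the definition of a convex realizer: given $P=P_1\vee\dots\vee P_t$ with each $P_i$ a linear geometry, you define the closure maps $\phi_i(A)$ (smallest member of the chain $P_i$ containing $A$), prove the reconstruction identity $A=\bigcap_i\phi_i(A)$, and turn each coordinate into a linear extension $\hat L_i$ by sorting on $|\phi_i(\cdot)|$ with ties broken by a fixed linear extension; incomparability $A\not\subseteq B$ then localizes to a coordinate where $\phi_i(B)\subsetneq\phi_i(A)$, which reverses the pair. The paper instead invokes the Edelman--Jamison characterization (Theorem~\ref{thm:cdim-width}) that $\cdim(P)$ equals the width of the subposet $J$ of meet-irreducible elements, covers $J$ by $\cdim(P)$ chains via Dilworth's theorem, and shows that the critical pairs whose top element lies in a fixed chain form a reversible set (a strict alternating cycle would force its tops to be an antichain), using Proposition~\ref{pro:crit-pair-dd-ud} to guarantee every critical pair has a meet-irreducible top. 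Your argument is more self-contained---it needs neither the quoted width theorem nor the critical-pair/alternating-cycle machinery---and it produces an explicit realizer from any convex realizer, which also shows the inequality holds realizer-by-realizer rather than only at the optimum. The paper's argument is shorter given Theorem~\ref{thm:cdim-width} and has the side benefit of setting up the correspondence between meet-irreducibles and critical pairs that is reused repeatedly later in the paper (e.g.\ Proposition~\ref{pro:J-critP} and the analysis of $P(k,n)$). One small point of bookkeeping in your write-up: the inequality $\dim(P)\ge\vcdim(P)$ that you cite is itself not a general poset fact but comes from the Boolean-algebra containment of Theorem~\ref{thm:BA}, which is how the paper justifies $\dim(P)\ge\maxdd(P)$; since that chain of inequalities is indeed stated in Section~\ref{sec:dimension-concepts}, your appeal to it is legitimate.
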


\begin{proof}
  As noted in Section~\ref{sec:dimension-concepts}, the inequalities $\dim(P)\ge\se(P)$, $\bdim(P)$, $\fdim(P)$, $\ldim(B)$ hold for every poset $P$. For $\dim(P)\ge\maxdd(P)$ note that by Theorem~\ref{thm:meet-dist} the poset $P$ contains a Boolean algebra of dimension $\maxdd(P)$. Let now $J$ be the
  set of meet-irreducible elements in $P$, i.e., $J$ is the set of all sets $B$ for
  which $\ud(B,P)=1$. From Theorem~\ref{thm:cdim-width}, we know that $\cdim(P)$ is the 
  width of the subposet $J$. 
  If $\cdim(P)=s$, it follows that there are chains
  $\cgC_1,\dots,\cgC_s$ in $P$ that cover all the sets in $J$.  For each
  $i\in [s]$, let $S_i$ consist of all critical pairs $(A,B)$ such that $B\in\cgC_i$.
  Each set $S_i$ of this form is reversible since it cannot contain a strict 
  alternating cycle.  It follows that $\dim(P)\le s$.
\end{proof}

\section{Separating Dimension and Convex Dimension}\label{sec:sep-dim-cdim}

Proposition~\ref{pro:cdim-dim} asserts that if $P$ is a convex geometry, then 
$\cdim(P)\ge\dim(P)$.  If $P$ is a linear geometry or the Boolean algebra $\bftwo^n$,
then $\cdim(P)=\dim(P)$.  More generally, this equality even holds for distributive       lattices, see~\cite{EdeSak88}.  However, the inequality is not always tight:
Figure~1 in~\cite{EdeSak88} illustrates a convex geometry with convex dimension
$4$ but (Dushnik-Miller) dimension $3$.

It is natural to ask whether dimension and convex dimension 
can be \emph{separated}, i.e., is there a family of convex geometries for
which dimension is bounded while convex dimension is not. 
We now proceed to give an affirmative answer. The remainder of this section
constitutes the proof of Theorem~\ref{thm:sep-dim-cdim}.

For $n\geq 3$ denote by $Q_n$ the sets of the form $[i]\times[j]\times[k]$ for $i\in[2]$ and $j,k\in [n]$. It is easy to see that the dimension of $Q_n$ is at most $3$. Pick lexicographic orderings $L_1, L_2, L_3$ of $Q_n$ where each orders $Q_n$ lexicographically with respect to a different cyclic ordering of the three coordinates.

Denote by $P_n$ the subposet of $Q_n$ consisting of the sets of the form $[2]\times[j]\times[k]$ for all $j,k\in [n]$ but $[1]\times[j]\times[k]$ only for those $j,k\in [n]$ such that $j+k\leq n+1$. As a subposet of $Q_n$, also $\dim(P_n)\leq 3$.

Note that $P_n$ contains the empty set as well as the full set $[2]\times[n]\times[n]$. Further, it clearly is intersection closed and every element can be either extended by $2$ or otherwise by any other element $\alpha$ missing on the first or second coordinate. We conclude that $P_n$ forms a convex geometry. Since $\maxdd(P_n)=3$,
we know $\dim(P_n)\ge 3$ by Proposition~\ref{pro:cdim-dim}. Therefore, $\dim(P_n)=3$.

Finally, note that $u(Y,P_n)=1$ exactly for the subsets of the form $[2]\times[n]\times[k]$, $[2]\times[k]\times[n]$ for $1\leq k<n$ as well as $[1]\times[j]\times[k]$ for $j+k=n+1$. And furthermore the latter type of sets forms a maximum antichain of size $n+1$. Theorem~\ref{thm:cdim-width} we have $\dim(P_n)=n+1$. 

We conclude that we have an infinite family $\{P_n:n\ge3\}$ of convex geometries
with $\dim(P_n)=3$, and $\cdim(P_n)= n+1$.  Accordingly, we have
separated dimension and convex dimension for the class of
convex geometries.

%

We show in 
Figure~\ref{fig:Knauer-2} an order diagram for $P_6$ as a subposet of $Q_6$. 

\begin{figure}
  \begin{center}
    \includegraphics[scale=1.8]{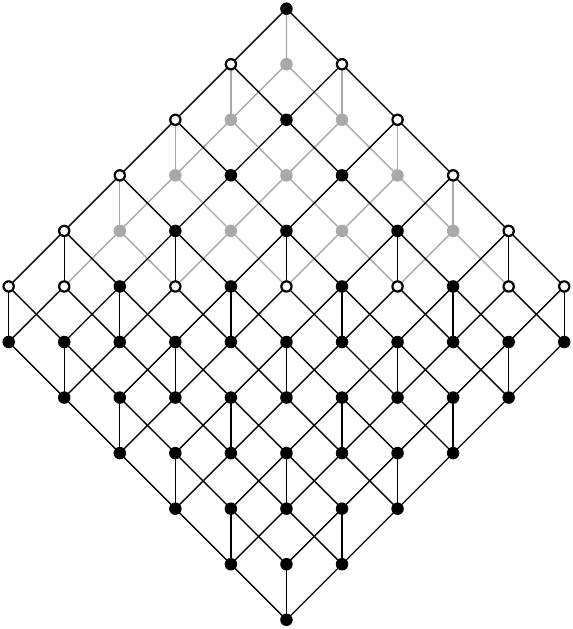}

  \end{center}
  \caption{For $n\ge3$, we illustrate a $3$-dimensional poset 
  $Q_n$ and its subposet $P_n$  obtained by removing the
  grey points. Then $P_n$ is a convex geometry, and the white points constitute the set of meet-irreducible elements of $P_n$.}
  \label{fig:Knauer-2}
\end{figure}
%
%

\section{Convex Geometries with Dimension~$2$}\label{sec:dimle2}

The family constructed in the preceding section suggests the
following question. Among convex geometries with 
dimension~$2$, is convex dimension bounded?  In this section,
we give an affirmative answer.  In fact, we show that when $\dim(P)\le2$,
$\cdim(P)=\dim(P)$, and the argument constitutes the proof 
of Theorem~\ref{thm:dimle2}.  The statement holds trivially if $\dim(P)=1$, so
we fix a convex geometry $P$ with $\dim(P)=2$. 

Our argument requires an unpublished
but by now quite well known result due to~K.\ Baker:
A finite lattice has Dushnik-Miller dimension at most~$2$ if and only if
its order diagram is planar.  Recall that a poset is said to be planar
if its order diagram can be drawn (following all rules for diagrams) without
edge crossings in the plane.  We show on the left side of 
Figure~\ref{fig:Knauer-5} a $2$-dimensional convex geometry with a planar 
order diagram. 

\begin{figure}
  \begin{center}
    \includegraphics[scale=1.8]{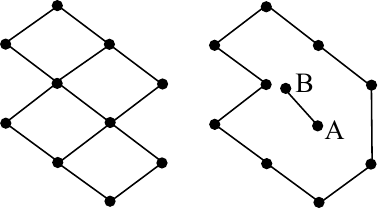}

  \end{center}
  \caption{On the left, we show a convex geometry with a planar order diagram.
  Note that maximum down degree is~$2$, and every interior face is a diamond.
  On the right, we suggest how an element in the interior would appear
  if it had up degree~$1$.  We will show that this is impossible.}
  \label{fig:Knauer-5}
\end{figure}

Using the result of Baker, we take a planar drawing of the order diagram of 
$P$.  Since $P$ is a lattice, there is a well defined maximal chain in $P$ that
constitutes the left boundary in the drawing. Also, there is a well defined
maximal chain in $P$ that constitutes the right boundary in the two drawings.
These two boundary chains can intersect. Regardless, any element of $P$ that is
not on either of these two chains is in the \emph{interior} of the drawing.

\begin{claim}
  The interior faces in the drawing are diamonds. 
\end{claim}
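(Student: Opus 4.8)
Recall that a \emph{diamond} is the Boolean algebra $\bftwo^2$, whose order diagram is a $4$-cycle consisting of a bottom element, two incomparable elements covering it, and a top element covering both; so the claim asserts that each interior face of the drawing is such a $4$-cycle. The plan is to combine three ingredients: first, since $\dim(P)=2$, Proposition~\ref{pro:cdim-dim} together with Theorem~\ref{thm:vcdim=maxdd} gives $\maxdd(P)\le 2$; second, $P$ is graded; and third, the interior of each face of the planar drawing contains no vertex or edge. I would work with an upward drawing, so that $x<_P y$ forces $y$ to be drawn strictly higher than $x$. First I would fix the combinatorial shape of a face: because $P$ is a lattice drawn in the plane, each interior face $F$ has a unique minimum $b$ and a unique maximum $t$, and its boundary consists of two saturated chains (a \emph{left} and a \emph{right} side) running from $b$ up to $t$ and meeting only at $b$ and $t$. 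The uniqueness of $b$ and $t$ is the standard consequence of the existence of meets and joins (two boundary minima would force their meet into the empty interior). Since $P$ is graded, both sides have the same length $\ell$, and $\ell\ge 2$ since two sides of length $1$ would be a repeated cover. Write the sides as $b=c_0\lessdot\cdots\lessdot c_\ell=t$ and $b=d_0\lessdot\cdots\lessdot d_\ell=t$.

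Next I would analyse the top of the face. The element $t$ covers $c_{\ell-1}$ and $d_{\ell-1}$, which are distinct, so $\dd(t,P)\ge 2$; as $\maxdd(P)\le 2$, these are the \emph{only} two sets covered by $t$. Applying the Boolean property (Theorem~\ref{thm:BA}) with $Y=t$ then yields $[m,t]\cong\bftwo^2$, where $m=c_{\ell-1}\wedge d_{\ell-1}$; in particular $m\lessdot c_{\ell-1}$, $m\lessdot d_{\ell-1}$, and $m$ has rank $\ell-2$. Since $b\le c_{\ell-1}$ and $b\le d_{\ell-1}$, we also get $b\le m$.

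It then remains to prove $\ell=2$, equivalently $b=m$, since in that case the face is exactly $[b,t]=\{b,c_1,d_1,t\}\cong\bftwo^2$, a diamond. Suppose for contradiction that $\ell\ge 3$, so $b<m$ strictly. Consider the $4$-cycle $Z\colon m\lessdot c_{\ell-1}\lessdot t\gtrdot d_{\ell-1}\gtrdot m$ bounding the diamond $[m,t]$, and let $D_Z$ be the closed disk it bounds. The two edges of $Z$ at $t$, namely $t c_{\ell-1}$ and $t d_{\ell-1}$, are (because $\dd(t,P)=2$) the only down-edges at $t$ and are exactly the two boundary edges of $F$ at its top; hence near $t$ the region $F$ and the disk $D_Z$ occupy the same angular sector. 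Using that the interior of $F$ is empty while $\partial F$ and $Z$ share this top pair of edges, I would conclude $F\subseteq D_Z$. But then $b\in F\subseteq D_Z$, whereas in the upward drawing every point of $D_Z$ has height at least that of its lowest vertex $m$, and $b$ is drawn strictly below $m$. This contradiction forces $\ell=2$, proving the claim.

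The main obstacle is the topological step $F\subseteq D_Z$: one must rule out the chords $m c_{\ell-1}$ and $m d_{\ell-1}$ escaping to the far side of $\partial F$. This is handled by planarity together with the emptiness of the face interior: since $c_{\ell-1},d_{\ell-1}\in\partial F$ and no edge enters the interior of $F$, neither chord can cross into $F$, so both bound $D_Z$ on the same side as $F$; combined with the coincidence of sectors at $t$ and the connectedness of $F$, this pins $F$ inside $D_Z$. The borderline possibility $m\in\{c_{\ell-2},d_{\ell-2}\}$ is absorbed into the same argument, since such a chord would again trap $b$ inside a cycle whose lowest vertex lies strictly above $b$. Everything else is routine bookkeeping with ranks.
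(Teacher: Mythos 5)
Your proof is correct and follows essentially the same route as the paper's: both identify the unique maximum $1_F$ of an interior face, use $\dim(P)=2$ to get $\maxdd(P)\le 2$ so that $1_F$ covers exactly the two boundary elements $A,B$ below it, and then apply the Boolean property (Theorem~\ref{thm:BA}) to obtain the common lower cover $A\cap B$. The only difference is that the paper asserts directly that this forces $F$ to be the diamond $\{1_F,A,B,A\cap B\}$, whereas you make the concluding topological step explicit (the Jordan-curve and upward-drawing height argument showing that a longer face would trap its minimum inside the cycle bounding $[A\cap B,1_F]$), which is a legitimate filling-in of a detail the paper leaves implicit.
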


\begin{proof} 
  Since $P$ is a lattice, each face $F$ has a unique maximum element which
  we denote $1_F$.  Also, $F$ has a unique minimum element which we denote
  $0_F$. Let $A,B$ be the vertices of $F$ covered by $1_F$. By the 
  Boolean property, we know that element $A\cap B$ is covered by $A$ and $B$, so 
  $F$ is a diamond formed by the elements of $\{1_F,A,B,A\cap B\}$.  
\end{proof}

\begin{claim}
  If $A$ is in the interior of the drawing, then $\ud(A,P)\ge2$,
\end{claim}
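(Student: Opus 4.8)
The plan is to show that an interior element $A$ cannot be meet-irreducible, i.e., cannot have $\ud(A,P)=1$. Suppose for contradiction that $A$ is in the interior and $\ud(A,P)=1$, so $A$ is covered by a unique element $Y=\uc(A,P)$. The right-hand side of Figure~\ref{fig:Knauer-5} suggests the obstruction: in a planar drawing, an interior vertex with a single upward edge would have to be locally surrounded by the faces meeting at $A$, and I want to extract a contradiction from how these faces fit together.

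First I would analyze the local picture at $A$ in the planar diagram. Since $A$ is interior, it lies strictly between the left and right boundary chains, so it has at least one element to its ``left'' and one to its ``right'' among the elements incident to it in the diagram. Every interior face incident to $A$ is a diamond by the previous Claim. I would consider the faces having $A$ on their boundary and classify each edge at $A$ as an up-edge (to something covering $A$) or a down-edge (to something covered by $A$). The assumption $\ud(A,P)=1$ means exactly one up-edge leaves $A$. The key combinatorial fact to exploit is that in a planar diagram the edges at $A$ are cyclically ordered, and each diamond face contributes a specific up-down pattern around $A$.

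The core step is to count covering relations using the diamond structure. Let $d=\dd(A,P)$ be the number of elements covered by $A$; by the Boolean property (Theorem~\ref{thm:BA}) applied at $A$, the interval $[A\wedge(\text{covered elements}),A]$ is a Boolean algebra $\bftwo^d$, so $A$ covers exactly $d$ elements with a common meet below them. Each of the $d$ down-edges at $A$ borders two diamond faces (on its two sides in the plane), and each such diamond has its top at an element covering $A$. Since $A$ is interior, consecutive down-edges around $A$ are separated by up-edges coming from the tops of the intervening diamonds. Tracing around $A$, the number of distinct up-neighbors forced by the diamonds incident to the down-edges must be at least $d$, so $\ud(A,P)\ge d\ge 1$; the point is that having strictly more than one down-edge at an interior vertex forces strictly more than one up-edge, and having the single up-edge would force $d\le 1$, pushing $A$ onto a boundary chain and contradicting that it is interior.

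The hard part will be making the ``tracing around $A$'' argument rigorous, i.e., arguing cleanly that in the planar drawing the cyclic sequence of edges at $A$ alternates between down-edges and up-edges in a way that pairs each interior diamond with a distinct upper cover. I would handle this by working with the two boundary chains explicitly: because $A$ is not on the left or right boundary, the faces immediately to its left and right are both interior diamonds, and I can follow the upper sides of these diamonds to produce two \emph{distinct} elements covering $A$, directly yielding $\ud(A,P)\ge 2$. Formalizing ``immediately to the left/right'' in terms of the planar embedding, and verifying these two upper covers are genuinely distinct (rather than coinciding, which would recreate a single up-edge), is where I expect the real care to be needed; the Boolean property and the lattice structure guarantee the meets and joins behave, but the planarity bookkeeping is the delicate piece.
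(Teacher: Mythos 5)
Your overall setup---examining the diamond faces that meet the unique up-edge at $A$---is the same as the paper's, but two of your intermediate claims are false and the step that actually produces the contradiction is missing. First, in an order diagram edges are drawn as monotone curves, so at any vertex the up-edges and the down-edges form two contiguous fans; they never interleave. Consequently, a face lying between two consecutive down-edges at $A$ is a diamond whose \emph{top} is $A$ itself, so your assertion that every diamond bordering a down-edge ``has its top at an element covering $A$'' is wrong, your alternation picture (``consecutive down-edges around $A$ are separated by up-edges'') collapses, and the inequality $\ud(A,P)\ge\dd(A,P)$ you try to extract from it does not follow (nor is it needed). Second, and more importantly, the difficulty you flag at the end---that the two upper covers obtained from the faces immediately left and right of $A$ might coincide---is not a technicality to be fixed by more careful planarity bookkeeping: under the hypothesis $\ud(A,P)=1$ they \emph{do} coincide. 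Both faces flanking the unique up-edge $e$ from $A$ to its cover $B$ are interior diamonds in which $A$ is a middle vertex, so both have top $B$, and no amount of tracing around $A$ will manufacture a second upper cover.

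The missing idea is a degree bound at $B$, not at $A$. Since the two diamonds $F$ and $F'$ share only the edge $e$, having $B=1_F$ and $B=1_{F'}$ would make $B$ cover three distinct elements, namely $A$ and the other middle vertex of each diamond, so $\dd(B,P)\ge3$. This is impossible: $\dim(P)=2$ forces $\maxdd(P)\le2$, since by Theorem~\ref{thm:BA} a down-degree of $3$ would embed $\bftwo^3$, which has dimension $3$ (see Proposition~\ref{pro:cdim-dim}). Hence in at least one of the two diamonds $B$ is not the top, which forces $A$ to be the bottom of that diamond and therefore to have two upper covers---the desired contradiction. This appeal to $\maxdd(P)\le2$, which your proposal never makes, is exactly how the paper closes the argument; without it, the local configuration at an interior vertex with a single up-edge is perfectly consistent with planarity and with all interior faces being diamonds.
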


\begin{proof}
  Referring to the illustration on the right side of Figure~\ref{fig:Knauer-5},
  suppose to the contrary that $A$ is in the interior of the drawing, and $\ud(A,P)=1$.
  Let $B$ be the unique element of $P$ that covers~$A$.  Also, let $e$ be the edge in the
  drawing having $A$ and $B$ as its end points.  All the points in the plane that belong to
  $e$, except possibly $B$, are in the interior of the drawing.  This implies that $e$
  is a boundary edge of two interior faces $F$ and $F'$ that have no points in common 
  other than the end points of the edge $e$.

  We note that we cannot have $B=1_F$, and $B=1_{F'}$, as this would require
  $\dd(B,P)\ge3$.  Without loss of generality, we may assume $B\neq 1_F$.  Since
  $F$ is a diamond, this forces $B=0_F$. In turn, this forces $\ud(A,P)\ge2$.
  The contradiction completes the proof of the claim.
\end{proof}

To complete the proof of the theorem, we simply observe that the width of the
subposet of $P$ consisting of all elements that have up degree~$1$ in $P$ is at most~$2$, 
since all these elements are on the two boundary chains in the drawing.

\section{Standard Example Number and Maximum Down Degree}

We have already noted that when $P$ is a convex geometry with ground set
$X$, then $\se(P)\ge\maxdd(P)$, except possibly when $\maxdd(P)=2$.  If 
$\maxdd(P)=2$, we can have $\se(P)=1$, as is the case when $P$ is the 
Boolean algebra $\bftwo^2$.  This raises the question as to whether there is a 
convex geometry $P$ for which $\se(P)>\maxdd(P)$.  We answer this question in the 
negative.  The results of this section constitute the proof of Theorem~\ref{thm:vcdim=se}.

We argue by contradiction and assume that $P$ is a convex geometry with
$\se(P)>\maxdd(P)$.  Then $\se(P)\ge 2$.  Choose a standard example $S_n$ in $P$ 
with the elements labeled as $\{A_1,\dots,A_n\}\cup\{B_1,\dots,B_n\}$, so that for 
all $i,j\in[n]$, $A_i\subseteq B_j$ if and only if $i\neq j$.  By 
Proposition~\ref{pro:cg-se}, we may further assume that for each $i\in[n]$, $(A_i,B_i)$ 
is a critical pair. From Proposition~\ref{pro:crit-pair-dd-ud}, it follows that 
for each $i\in[n]$, $\dd(A_i,P)=\ud(B_i,P)=1$. 

For each $i\in[n]$, let $Y_i$ be the unique element covering $B_i$ in $P$.
Since $|Y_i-B_i|=1$, there is a unique element $\alpha_i\in X$ such that
$Y_i=B_i\cup\{\alpha_i\}$.  Since $(A_i,B_i)$ is a critical pair, and $B_i\subsetneq Y_i$,
we know $A_i\subset Y_i$.  Since $A_i\parallel_P B_i$, this forces $\alpha_i\in A_i-B_i$.

Now let $X_i$ be the unique set in $P$ that is covered by $A_i$. Since $(A_i,B_i)$
is a critical pair, we know $X_i\subset B_i$.  This now requires $X_i=A_i-\{\alpha_i\}$.
i.e., $\alpha_i$ is the only element of $A_i$ that is not in $B_i$.

Now set $Y_0=A_1\vee\dots\vee A_n$.  Also, for each $i\in[n]$, let
$B'_i=Y_0\cap B_i$.  Then $B'_i$ is an element of $P$, and $\alpha_i\in A_i-B'_i$.
Furthermore, for each $j\in[n]$ with $j\neq i$, $A_j$ is a subset of $B_i$ and
$Y_0$. Therefore, $A_j\subset B'_i$.  It follows that the
sets in $\{A_1,\dots,A_n\}\cup\{B'_1,\dots,B'_n\}$ determine a subposet of
$P$ which is isomorphic to $S_n$.  In particular, the sets in $\{B'_1,\dots,B'_n\}$
form an $n$-element antichain in $P$.

For each $i\in[n]$, let $Y'_i=B'_i\cup\{\alpha_i\}$.  We then have:
\begin{multline}\label{eqn:Y'_i}
  Y'_i = B'_i\cup\{\alpha_i\}=(Y_0\cap B_i)\cup\{\alpha_i\}=(Y_0\cup\{\alpha_i\})\cap (B_i\cup\{\alpha_i\})=Y_0\cap Y_i.
\end{multline}
Since $Y_0$ and $Y_i$ are elements of $P$, it follows that $Y'_i$ is an element of
$P$.  Also, from its definition, it is clear that $Y'_i$ covers
$B'_i$.

\begin{claim}
  If $i,j\in[n]$, then $A_j\subset Y'_i$.
\end{claim}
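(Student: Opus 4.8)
The plan is to prove the containment $A_j\subseteq Y'_i$ by splitting on whether $j\neq i$ or $j=i$, the second case being where the substance lies.

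For $j\neq i$ the containment is essentially free: just before the claim it was recorded that $A_j\subseteq B'_i$, and since $Y'_i=B'_i\cup\{\alpha_i\}$ we have $B'_i\subseteq Y'_i$. Concatenating these gives $A_j\subseteq Y'_i$ at once.

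For $j=i$ I must show $A_i\subseteq Y'_i=B'_i\cup\{\alpha_i\}$, and here I would use the decomposition $A_i=X_i\cup\{\alpha_i\}$, where $X_i=A_i-\{\alpha_i\}$ is the unique set covered by $A_i$ and satisfies $X_i\subseteq B_i$. The one extra fact I would bring in is $A_i\subseteq Y_0$, which is immediate because $Y_0=A_1\vee\dots\vee A_n$ is an upper bound for every $A_j$. Then $X_i\subseteq A_i\subseteq Y_0$ together with $X_i\subseteq B_i$ force $X_i\subseteq Y_0\cap B_i=B'_i$, and therefore $A_i=X_i\cup\{\alpha_i\}\subseteq B'_i\cup\{\alpha_i\}=Y'_i$.

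The only delicate point is this $j=i$ case: one must observe that $X_i$ is pinned inside both $Y_0$ and $B_i$, and hence inside their intersection $B'_i$; the rest is bookkeeping with containments already in hand. With the claim established for all $i,j$, I anticipate using it next to conclude that $Y_0$, being the least upper bound of the $A_j$, satisfies $Y_0\subseteq Y'_i$; combined with $Y'_i=Y_0\cap Y_i\subseteq Y_0$ this forces $Y'_i=Y_0$ for every $i$. Since each $Y'_i$ covers the distinct set $B'_i$, the single set $Y_0$ would then cover all $n$ of the sets $B'_1,\dots,B'_n$, giving $\dd(Y_0,P)\ge n>\maxdd(P)$ and the desired contradiction.
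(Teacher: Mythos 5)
Your proof is correct, and it diverges from the paper's in the one place where there is anything to prove, namely the case $j=i$. For $j\neq i$ both you and the paper simply concatenate $A_j\subseteq B'_i\subseteq Y'_i$. For $j=i$ the paper invokes its displayed identity $Y'_i=Y_0\cap Y_i$ together with the two containments $A_i\subseteq Y_i$ (which came from the critical-pair property of $(A_i,B_i)$) and $A_i\subseteq Y_0$, and intersects them. You instead bypass that identity entirely and use the join-irreducible decomposition $A_i=X_i\cup\{\alpha_i\}$ with $X_i=A_i-\{\alpha_i\}\subseteq B_i$: since also $X_i\subseteq Y_0$, you pin $X_i$ inside $B'_i=Y_0\cap B_i$ and then add $\alpha_i$ back on both sides. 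Both arguments rest only on facts established earlier in the section, and both are two-line set manipulations; the paper's version has the small advantage of reusing the identity $Y'_i=Y_0\cap Y_i$ (which is needed anyway to see $Y'_i\in P$), while yours has the advantage of making the role of the element $\alpha_i$, rather than the lattice operation, carry the argument. Note also that your anticipated continuation (forcing $Y_0=Y'_i$ and then $\dd(Y_0,P)\ge n$) is exactly how the paper concludes, so your proof splices cleanly into the surrounding argument.
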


\begin{proof}
  If $j\neq i$, then $A_j\subset B'_i\subset Y'_i$.  If $j=i$, since
  $A_i\subset Y_i$ and $A_i\subset Y_0$, it follows from (\ref{eqn:Y'_i}) that 
  $A\subset Y'_i$.
\end{proof}

\begin{claim}
  If $i\in[n]$, then $Y_0= Y'_i$.
\end{claim}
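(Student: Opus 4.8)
The plan is to prove $Y_0 = Y_i'$ by establishing the two inclusions separately, leveraging the fact that the preceding claim has already done the substantive work.

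First I would dispose of the inclusion $Y_i' \subseteq Y_0$, which is immediate: identity~(\ref{eqn:Y'_i}) gives $Y_i' = Y_0 \cap Y_i$, so $Y_i'$ is by construction a subset of $Y_0$. For the reverse inclusion $Y_0 \subseteq Y_i'$, the key is to invoke the universal property of the join in a convex geometry. Recall that $Y_0 = A_1 \vee \dots \vee A_n$, and that in a convex geometry the join is computed as $A_1 \vee \dots \vee A_n = \cap\{C \in P : A_1 \cup \dots \cup A_n \subseteq C\}$; equivalently, $Y_0$ is the \emph{smallest} element of $P$ containing every $A_j$. The preceding claim asserts exactly that $A_j \subset Y_i'$ for all $j \in [n]$, and $Y_i'$ is itself an element of $P$. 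Hence $Y_i'$ is one of the sets $C$ appearing in the intersection that defines $Y_0$, which forces $Y_0 \subseteq Y_i'$. Combining the two inclusions yields $Y_0 = Y_i'$.

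The honest assessment is that this claim is not where the difficulty lies: once the inclusion $A_j \subset Y_i'$ is known for every $j$, the equality is forced purely by the lattice structure, so the real effort was the earlier setup of the auxiliary sets $B_i'$ and $Y_i'$ together with the preceding claim. I would therefore expect the only pitfall to be conceptual rather than technical, namely correctly identifying $Y_0$ as the least upper bound of $\{A_1,\dots,A_n\}$ and applying the meet formula for the join; no genuine obstacle should remain.

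It is worth flagging why this claim is the pivotal one. Having $Y_0 = Y_i'$ for \emph{every} $i \in [n]$ means the sets $Y_1', \dots, Y_n'$ all coincide with the single set $Y_0$. Since each $Y_i'$ covers $B_i'$ with $Y_i' - B_i' = \{\alpha_i\}$, it follows that $Y_0$ covers all $n$ of the distinct sets $B_1', \dots, B_n'$ (which form an $n$-element antichain established earlier). This gives $\dd(Y_0, P) \ge n > \maxdd(P)$, the contradiction that will complete the proof of Theorem~\ref{thm:vcdim=se}.
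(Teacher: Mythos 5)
Your proof is correct, but it routes one of the two inclusions differently from the paper. The substantive direction $Y_0 \subseteq Y'_i$ is identical in both: the preceding claim gives $A_j \subset Y'_i$ for every $j \in [n]$, and since $Y'_i$ is an element of $P$, the universal property of the join $Y_0 = A_1 \vee \dots \vee A_n$ (the least element of $P$ containing every $A_j$) forces $Y_0 \subseteq Y'_i$. For the reverse inclusion, however, the paper never appeals to the identity $Y'_i = Y_0 \cap Y_i$; instead it notes that $B'_i \subseteq Y_0 \subseteq Y'_i$ and that $Y'_i$ covers $B'_i$, so that $Y_0$ must coincide with one of the two endpoints of this covering --- an argument which, strictly speaking, still requires ruling out $Y_0 = B'_i$ (true because $\alpha_i \in A_i \subseteq Y_0$ while $\alpha_i \notin B'_i$, but left implicit in the paper). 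Your version extracts $Y'_i \subseteq Y_0$ directly from equation~(\ref{eqn:Y'_i}), so you never need the covering relation nor the exclusion of the lower endpoint; this makes your argument marginally tighter and more self-contained, at the cost of leaning on the set-theoretic identity rather than on the order-theoretic structure the paper prefers to exploit. Your closing observation --- that $Y_0 = Y'_i$ for all $i$ makes $Y_0$ cover the $n$ distinct sets $B'_1,\dots,B'_n$, whence $\maxdd(P) \ge n$ and the desired contradiction --- is exactly how the paper concludes.
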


\begin{proof}
  The first claim implies that $Y_0\subseteq Y'_i$ for all $i\in[n]$.
  If $i\in[n]$, the definition of $B'_i$ implies that $B'_i\subseteq Y_0$.  
  Since $Y'_i$ covers $B'_i$, we conclude that $Y_0=Y'_i$.
\end{proof}

We have now shown the the element $Y_0$ covers all elements of $\{B'_1,\dots,B'_n\}$.
This implies $\maxdd(P)\ge\dd(Y_0,P)\ge n$.  In turn, we have $\maxdd(P)\ge n$.
With this observation, the proof is complete.

\section{Convex Geometries with Standard Example Number $1$}\label{sec:se=1}

We now know that if $P$ is a convex geometry, then $\se(P)=\maxdd(P)$,
except possibly when $\se(P)=1$ and $\maxdd(P)=2$.  In this section,
we show that if $P$ is a convex geometry, and $\se(P)=1$, then
$\dim(P)=\cdim(P)\le 2$.  This section constitutes the proof of 
Theorem~\ref{thm:se=1}.  

We fix a convex geometry $P$ with $\se(P)=1$.  If $\maxdd(P)=1$, then $P$
is a chain, and the conclusion holds trivially.  So we may assume that
$\maxdd(P)=2$ and hence $\dim(P)\geq 2$.

Let $J$ denote the subposet of $P$ determined by the meet-irreducible
elements.  Then $\cdim(P)=\width(J)$.  If the proposition fails,
there is a $3$-element antichain $\{B_1,B_2,B_3\}$ in $J$.
For each $i\in[3]$, let $Y_i$ be the unique element of $P$ that
covers $B_i$.  Since $\maxdd(P)=2$, we may assume that $Y_1\neq
Y_2$.  Since $P$ does not contain $S_2$, either $B_1<_P Y_2$
or $B_2<_P Y_1$.   Note that since $P$ is a lattice, it cannot be the case that
both inequalities hold, so we may assume without loss of generality
that $B_1<_P Y_2$ and $B_2\parallel_P Y_1$.

Then since $Y_1$ is the unique element covering $B_1$, we have $Y_1<_P Y_2$, and there is an element $Z_1$ covered by
$Y_2$ such that $Y_1\le_P Z_1$.  The Boolean property
then implies that there is an element $X$ covered by $Z_1$ and
$B_2$.  Furthermore, since $P$ is graded and $B_1 \parallel_P B_2$,
if $Y_1<_P Z_1$, then $B_1,Y_1 \parallel_P X$, and $\{B_1,Y_1,B_2,X\}$ 
determines $S_2$.  We conclude that $Y_1=Z_1$.

We note that $\{B_3,Y_3\}\cap\{B_1,Y_1,B_2,Y_2,X\}=\emptyset$, as
any common point implies a comparability that does not exist or a down degree 
larger than two.   Furthermore, note that $Y_1,Y_2,Y_3$ must form a chain in some order: otherwise if for $i=1,2$ we would have $Y_3\parallel_P Y_i$, then since $Y_i$ covers $B_i$, we get $Y_3\parallel_P B_i$ yielding a $S_2$ together with $B_3$.
Now, since $Y_2$ covers $Y_1$ and $Y_1$ has  already down degree two, we must have 
$Y_2<Y_3$. Let $Y_2\leq Z_2< Y_3$ be covered by $Y_3$. The Boolean property then 
implies that there is an element $W$ covered by $Z_2$ and $B_3$. Since $Y_2$ already 
has down degree two, and $\{B_1,B_2,B_3\}$ is an antichain, we force $Y_2<Z_2$. 
Since $P$ is graded, $Y_2$ cannot be larger than $W$.  Since $\{B_1,B_2,B_3\}$ is 
an antichain, $B_3$ cannot be larger than $B_2$. Hence $\{Y_2,B_2,B_3,W\}$ induce a copy of $S_2$.  The contradiction completes the proof.

\section{Separating Three Parameters from the Other Four}\label{sec:Pkn} 

We have noted that when $P$ is a convex geometry, then $\dim(P)\ge\maxdd(P)$.
We note that if $P$ is the Boolean algebra $\bftwo^n$, or a linear geometry, 
then $\dim(P)=\maxdd(P)$.  Note that these parameters also agree on the convex
geometries discussed in the preceding two sections.
Now it is natural to ask whether there is a class of convex geometries on which 
maximum down degree is bounded but dimension is not.  We now proceed 
to give an affirmative answer to this question. In fact, we will construct an infinite
family of convex geometries for which (1)~maximum down degree, fractional
dimension, and standard example
number are bounded; while (2)~convex dimension,
dimension, Boolean dimension, and local dimension are unbounded.  


The remainder of this section constitutes the proof of 
Theorem~\ref{thm:Pkn}.  For the readers convenience, we give nine statements including the ones of the theorem.  Let $k$ and $n$ be integers with $1\le k\le n-2$.  
We show there is a family $P(k,n)$ of subsets of $[n]$ satisfying the
following properties:

\begin{enumerate}
  \item[1.] $P(k,n)$ is a convex geometry with ground set $[n]$.
  \item[2.] If $1\le k<k'\le n-2$, then $P(k,n)$ is a subposet of $P(k',n)$.
  \item[3.] $\cdim(P(k,n))=\binom{n-1}{k}$.
  \item[4.] $\maxdd(P(k,n))=k+1$.
  \item[5a.] $\dim(P(1,n))=1+\lfloor\lg n\rfloor$,
  \item[5b.] $\dim(P(k,n))\le (k+1)2^{k+2}\log n$.
  \item[6.] $\se(P(k,n))=k+1$.
  \item[7.] For fixed $k\ge1$,
    $\bdim(P(k,n))\rightarrow\infty$ as $n\rightarrow\infty$.
  \item[8.] For fixed $k\ge1$,
    $\ldim(P(k,n))\rightarrow\infty$ as $n\rightarrow\infty$.
  \item[9.] $\fdim(P(k,n))< 2^{k+1}$.
\end{enumerate}

We begin with the definition of $P(k,n)$. Then we proceed to prove that
the nine statements of the theorem are satisfied.  

Let $k$ and $n$ be integers with $1\le k\le n-2$.  A set $A\subseteq[n]$ belongs to
$P(k,n)$ if and only if the following property is satisfied:

\smallskip
\noindent
\textit{Membership.}\quad If $|A|=k+i-1$, then $[i-1]\subseteq A$.

\smallskip
With our convention that $[i-1]=\emptyset$ when $i-1\le0$, we note that 
$A\in P(k,n)$
whenever $A\subset[n]$ and $|A|\le k$.  On the other hand, $\{1,2,3,6,11\}$
and $\{1,2,6,10,11\}$ belong to $P(3,12)$ while $\{1,3,6,10,11\}$ does not.

The next result is statement~1.

\begin{proposition}\label{pro:CGkn}
  The family $P(k,n)$ is a convex geometry with ground set $[n]$. 
\end{proposition}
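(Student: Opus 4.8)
The plan is to first recast the \textit{Membership} condition in a form that is easier to manipulate. For a fixed set $A$ the size $|A|$ determines a unique index $i$ with $|A|=k+i-1$, namely $i-1=|A|-k$, so the Membership condition is equivalent to the single inclusion $[\,|A|-k\,]\subseteq A$, where we use the convention $[j]=\emptyset$ for $j<1$. I would carry out the entire argument in terms of this reformulated condition, and then verify in turn the three defining properties of a convex geometry from Section~\ref{sec:cgs-posets}: the Base Property, the Intersection Property, and the Extension Property.

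The Base Property is immediate: $\emptyset$ satisfies $[\,-k\,]=\emptyset\subseteq\emptyset$, and $[n]$ satisfies $[\,n-k\,]\subseteq[n]$. For the Intersection Property the key point is the monotonicity of the required initial segment in the size of the set. If $A,B\in P(k,n)$ and $C=A\cap B$, then $|C|\le|A|$ and $|C|\le|B|$, hence $[\,|C|-k\,]\subseteq[\,|A|-k\,]\subseteq A$ and likewise $[\,|C|-k\,]\subseteq B$; intersecting these two inclusions gives $[\,|C|-k\,]\subseteq C$, which is exactly the condition for $C\in P(k,n)$.

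The Extension Property is the substantive step. Given $A\in P(k,n)$ with $A\ne[n]$, set $m=|A|$, so that $m<n$ and $[\,m-k\,]\subseteq A$. Extending $A$ by any new element raises the size to $m+1$, and the reformulated condition for $A\cup\{\alpha\}$ reads $[\,m+1-k\,]\subseteq A\cup\{\alpha\}$. Since $[\,m+1-k\,]=[\,m-k\,]\cup\{m-k+1\}$ when $m-k+1\ge1$, and equals $\emptyset$ otherwise, the only potentially new obligation is that the single element $m-k+1$ belong to $A\cup\{\alpha\}$. I would split into two cases according to whether this obligation is already met by $A$ itself. If $m-k+1\le0$, or if $m-k+1\ge1$ and $m-k+1\in A$, then $[\,m+1-k\,]\subseteq A$, so any $\alpha\in[n]-A$ works, and such $\alpha$ exists because $m<n$. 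Otherwise $m-k+1\ge1$ and $m-k+1\notin A$; here I take $\alpha=m-k+1$, and one checks $1\le m-k+1\le n-1$, which holds since $k\le m\le n-1$, so $\alpha\in[n]-A$, and then $A\cup\{\alpha\}\supseteq[\,m-k\,]\cup\{m-k+1\}=[\,m+1-k\,]$, as required.

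I expect the main obstacle to be nothing more than careful bookkeeping in the Extension step: correctly isolating $m-k+1$ as the unique new requirement, and confirming in the second case that this index is simultaneously inside the ground set $[n]$ and outside $A$. Everything else reduces to the monotonicity observation used for intersections, so once the Membership condition is rewritten as $[\,|A|-k\,]\subseteq A$ the proposition should be routine.
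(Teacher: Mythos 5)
Your proof is correct and takes essentially the same route as the paper's: a direct verification of the Base, Intersection, and Extension Properties, using monotonicity of the required initial segment in $|A|$ for intersections, and adding the single newly required element (or any element when none is required) for extensions. Your reformulation of Membership as $[\,|A|-k\,]\subseteq A$ is merely a cleaner bookkeeping device that unifies the paper's case split between $|A|\le k$ and $|A|=k+i-1$, $i\ge 2$.
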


\begin{proof}
  By definition $P(k,n)$ is a family of subsets of $[n]$.  We now
  show that $P(k,n)$ satisfies the Base, Intersection and Extension
  Properties.  First, we observe that $\emptyset$ and $[n]$ satisfy the
  Membership requirement, so both are in $P(k,n)$.  
  Therefore, $P(k,n)$ satisfies the Base Property.

  Let $A,B\in P(k,n)$.  We show that $A\cap B\in P(k,n)$.  This holds
  trivially if one of $A$ and $B$ is a subset of the other, so we may
  assume that neither is contained in the other.  The conclusion that
  $A\cap B\in P(k,n)$ also holds trivially if $|A\cap B|\le k$,
  so we may assume that $|A\cap B|=k+i-1$ for some $i$ with $2\le i\le n-k+1$.  Then 
  $|A|\ge k+i-1$ and $|B|\ge k+i-1$, so $[i-1]\subset A$ and
  $[i-1]\subset B$.  Therefore, $[i-1]\subset A\cap B$.  Hence, $A\cap B\in P(k,n)$.
  With these observations, we have shown that $P(k,n)$ satisfies the
  Intersection Property.

  Finally, let $A\in P(k,n)$ with $A\neq X$.  Suppose first that $|A|\le k$.  
  If $1\not\in A$, then $A\cup\{1\}\in P(k,n)$.  If $1\in A$, then
  $A\cup\{\alpha\}\in P(k,n)$ for every $\alpha\in[n]-A$.
  Now suppose that $|A|=k+i-1$ for some $i$ with $2\le i\le n-k+1$.
  Then $[i-1]\subset A$.
  Since $A\neq[n]$, we know $i\le n-k$. If
  $i\not\in A$, then $A\cup\{i\}\in P(k,n)$. If $i\in A$,
  then $A\cup\{\alpha\}\in P(k,n)$ for every $\alpha\in[n]-A$.
  With this observation, we have shown that $P(k,n)$ satisfies the
  Extension Property.  Therefore $P(k,n)$ is a convex 
  geometry with ground set $[n]$.
\end{proof}

The subfamily $\{P(1,n):n\ge3\}$ will play an important role in
arguments to follow.  We illustrate $P(1,5)$ in 
Figure~\ref{fig:knauer-3} and note that $P(1,4)$ is the same 
convex geometry illustrated in Figure~2 in~\cite{EdeSak88}.

\begin{figure}
  \begin{center}
    \includegraphics[width=.7\textwidth]{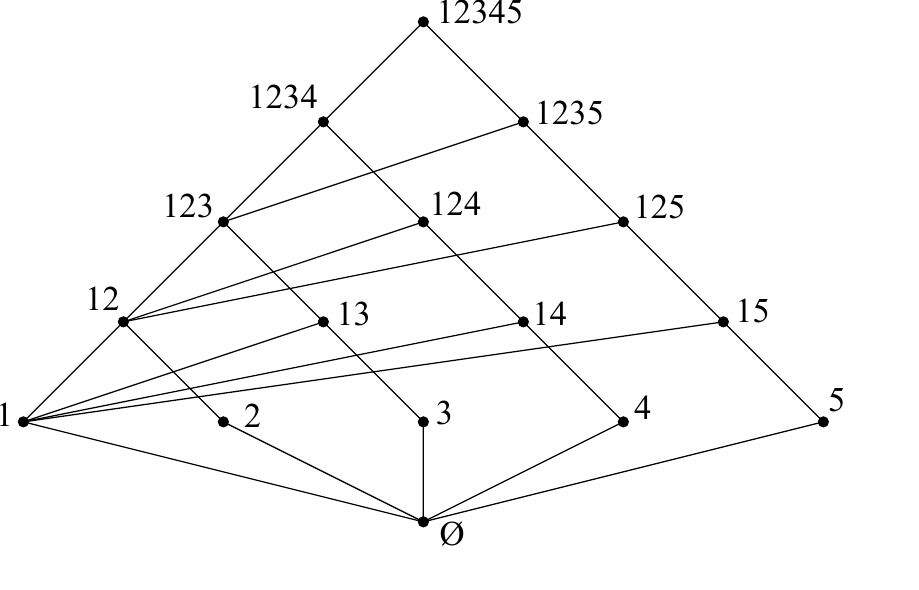}

  \end{center}
  \caption{We illustrate the convex geometry $P(1,5)$.  In the drawing, sets
  are indicated without braces and commas.  Although these facts will follow 
  from the more general arguments given below, readers may enjoy verifying that 
  $\cdim(P(1,5))=4$, $\dim(P(1,5))=\bdim(P(1,5))=\fdim(P(1,5))=3$, and 
  $\se(P(1,5))= \maxdd(P(1,5))=2$.}
  \label{fig:knauer-3}
\end{figure}

The next result is statement 2, and it helps to explain our interest in
the subfamily $\{P(1,n):n\ge3\}$.  

\begin{proposition}\label{pro:monotonic}
  If $1\le k<k'\le n-2$, then $P(k,n)$ is a subposet of $P(k',n)$.
\end{proposition}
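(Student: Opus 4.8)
The plan is to show the containment $P(k,n)\subseteq P(k',n)$ directly from the Membership condition, since being a subposet is simply set-theoretic containment of the two families (both are families of subsets of $[n]$ ordered by inclusion, so if one family is a subset of the other, it is automatically an induced subposet). Thus the entire task reduces to proving: every $A\subseteq[n]$ satisfying the Membership condition for parameter $k$ also satisfies it for parameter $k'$, whenever $1\le k<k'\le n-2$.

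First I would unwind what Membership means for each parameter. For parameter $k$, the condition says: if $|A|=k+i-1$ then $[i-1]\subseteq A$. It is cleaner to reparametrize by the size itself. Writing $|A|=s$, membership in $P(k,n)$ requires $[s-k]\subseteq A$ (taking $i-1=s-k$, and recalling the convention $[m]=\emptyset$ for $m<1$, so the condition is vacuous when $s\le k$). Likewise, $A\in P(k',n)$ requires $[s-k']\subseteq A$. So I would restate the goal as: for every $A$ with $|A|=s$, if $[s-k]\subseteq A$ then $[s-k']\subseteq A$.

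The key observation is then immediate: since $k<k'$, we have $s-k'<s-k$, hence $[s-k']\subseteq[s-k]$ as initial segments of $[n]$ (using the convention to handle the case where one or both are empty). Therefore $[s-k]\subseteq A$ forces $[s-k']\subseteq A$. This establishes $A\in P(k,n)\Rightarrow A\in P(k',n)$, giving $P(k,n)\subseteq P(k',n)$ as required.

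I do not expect any real obstacle here; the proposition is essentially a bookkeeping check that a weaker constraint (the one for the larger parameter $k'$, which only pins down a shorter initial segment) is implied by the stronger constraint (for $k$). The only point deserving care is the boundary behavior of the $[m]=\emptyset$ convention for small sets, which I would handle by phrasing everything in terms of the reparametrized requirement $[s-k]\subseteq A$ so that the small-$s$ cases are covered uniformly rather than split off as separate cases.
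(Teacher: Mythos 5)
Your proof is correct and is essentially the same argument as the paper's: the paper writes $|A|=k'+i-1=k+(k'-k)+i-1$, applies the Membership condition for $k$ to get $[k'-k+i-1]\subseteq A$, and notes $k'-k>0$ forces $[i-1]\subseteq A$, which is exactly your observation that the longer initial segment $[s-k]$ contains the shorter one $[s-k']$. Your reparametrization by $s=|A|$ and the explicit remark that family containment gives the subposet relation are only cosmetic differences.
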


\begin{proof}
  Let $A\in P(k,n)$ and suppose that $|A|=k'+i-1$.
  Then $|A|=k+(k'-k)+i-1$.  Since $A\in P(k,n)$, we must have
  $[k'-k+i-1]\subset A$.  Since $k'-k>0$, this implies $[i-1]\subset A$.
  Therefore, $A\in P(k',n)$.
\end{proof}

Let $J(k,n)$ denote the family of all sets of the form $[i-1]\cup B$ where
$i$ and $B$ satisfy the following requirements:
\begin{enumerate}
  \item $i\in[n]$ and $B\subset[n]$. 
  \item $i<j$ for every $j\in B$.
  \item $|B|\le k$.
  \item If $|B|<k$, then $B=\{j\in[n]: i+1\le j\le n\}$.
\end{enumerate}

\begin{proposition}\label{pro:Jkn}
  If $1\le k\le n-2$, then $J(k,n)$ is the set of all meet-irreducible
  elements of $P(k,n)$.
\end{proposition}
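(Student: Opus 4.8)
The plan is to compute the up-degree of an arbitrary set $A\in P(k,n)$ directly and then read off exactly which sets have up-degree $1$. The key enabling observation is that convex geometries are graded by cardinality, so the sets covering a given $A\neq[n]$ are precisely the one-element extensions $A\cup\{\alpha\}$ with $\alpha\in[n]-A$ that happen to lie in $P(k,n)$; consequently $\ud(A,P(k,n))$ equals the number of such valid $\alpha$. Writing $m=|A|$, the Membership requirement says that $A\cup\{\alpha\}\in P(k,n)$ if and only if $[m+1-k]\subseteq A\cup\{\alpha\}$, so the whole problem reduces to counting, for each $A$, the one-element extensions that satisfy this inclusion.

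Next I would split on the size $m$. If $m<k$, then every one-element extension still has size at most $k$ and so stays in $P(k,n)$, giving $\ud(A,P(k,n))=n-m\ge n-k+1\ge3$; such $A$ is not meet-irreducible, and it is also too small to belong to $J(k,n)$. If $m\ge k$, set $s=m-k+1\ge1$ and note that Membership applied to $A$ itself already forces $[s-1]\subseteq A$. Then $A\cup\{\alpha\}\in P(k,n)$ exactly when $s\in A\cup\{\alpha\}$, which yields two subcases: if $s\in A$, every $\alpha$ works and $\ud(A,P(k,n))=n-m$, equal to $1$ precisely when $m=n-1$; if $s\notin A$, only $\alpha=s$ works and $\ud(A,P(k,n))=1$. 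This already pins down the meet-irreducible sets as two explicit families.

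Finally I would match these families against the definition of $J(k,n)$. In the subcase $s\notin A$, using $[s-1]\subseteq A$ and $|A|=k+s-1$, the set $B:=A\setminus[s-1]$ has $|B|=k$ and consists of elements exceeding $s$, so $A=[s-1]\cup B$ is the $J(k,n)$-set with index $i=s$ and $|B|=k$. In the subcase $s\in A$ with $m=n-1$, one gets $s=n-k$ and $A=[n]\setminus\{i\}$ for some $i\ge n-k+1$; writing $A=[i-1]\cup\{i+1,\dots,n\}$ exhibits it as the $J(k,n)$-set with $B=\{i+1,\dots,n\}$ and $|B|=n-i<k$. Conversely I would check that every set of $J(k,n)$ lies in $P(k,n)$ and falls into one of these two forms, hence is meet-irreducible: the sets with $|B|=k$ are the first family, and by condition~(4) the sets with $|B|<k$ are exactly the sets $[n]\setminus\{i\}$ with $i\ge n-k+1$, which form the second.

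The main obstacle is the bookkeeping in this last step. I must keep the index conventions consistent: the convention $[i-1]=\emptyset$ for $i=1$ is what absorbs the size-$k$ meet-irreducibles into the first family, and the boundary constraints ($s\le n-k$ in the first family, $i\ge n-k+1$ in the second) must be verified so that the two families are disjoint and jointly exhaust the meet-irreducibles. The most delicate point is the threshold set $[n]\setminus\{n-k\}$, which is meet-irreducible via the $s\notin A$ subcase and therefore belongs to the first family; this is precisely why condition~(4) must start the degenerate sets $[n]\setminus\{i\}$ at $i=n-k+1$ rather than at $i=n-k$. Once these boundaries are checked, everything else is a routine consequence of the Membership requirement and gradedness.
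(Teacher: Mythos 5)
Your proposal is correct and takes essentially the same approach as the paper: both arguments rest on gradedness (covers are single-element extensions), then split on $|A|$ versus $k$ and $n-1$ and on whether the critical index (your $s$, the paper's $i$ with $|A|=k+i-1$) belongs to $A$, and finally match the resulting forms against the two families in $J(k,n)$. Your packaging as a uniform up-degree computation, including the careful treatment of the threshold set $[n]\setminus\{n-k\}$, is just a reorganization of the paper's two-containment argument.
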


\begin{proof}
  Clearly, all sets in $J(k,n)$ satisfy the membership requirement and
  belong to $P(k,n)$.  Note further that all sets in $J(k,n)$ have size of  at least $k$.  

  We observe that if $A=[i-1]\cup B$ is in $J(k,n)$ and $|B|<k$.
  Then $|A|=n-1$, and $[n]$ is the only element of $P(k,n)$ that covers
  $A$.  On the other hand, if $A=[i-1]\cup B$ is in $J(k,n)$ and
  $|B|=k$, then the only element of $P$ covering $A$ is $A'=[i]\cup B$.
  With these observations, we have verified that all elements of $J(k,n)$
  are meet-irreducible.

  For the converse, suppose $A$ is a meet-irreducible element of $P(k,n)$. Clearly $n>|A|$ since the entire set is not a meet-irreducible element.   
  If $|A|=n-1$, say $A=[n]-\{i\}$, then $A=[i-1]\cup B$, where $B=\{j\in[n]:i+1\le j\le n\}$. Since the membership requirement of $P(k,n)$ forces $[n-k-1]\subset A$ we have $i\geq n-k$ and $|B|\leq k$. This implies $A\in J(k,n)$.

  If $k\le|A|\le n-2$, write $|A|=k+i-1$ where $1\le i\le n-k-1$.
  Then by the membership requirement of $P(k,n)$ we have $[i-1]\subseteq A$.  If $i\not\in A$, then $A=[i-1]\cup B$ where $|B|=k$. Hence, $A\in J(k,n)$. On the other
  hand, if $i\in A$, then $A$ is covered by $A\cup\{j\}$ for all $j\in [n]-A$.
  The assumption that $|A|\le n-2$ then implies that $\ud(A,P(k,n))\ge2$, contradicting that $A$ was a meet-irreducible element.

  Finally note that $|A|\ge k$, for if $|A|<k$, then $A$ is covered by $A\cup\{\alpha\}$ for every $\alpha\in[n-k]$. Since  by definition
  $k\leq n-2$, we get $\ud(A,P(k,n))\ge2$, contradicting that $A$ was a meet-irreducible element. Hence, we have considered all the cases and the proof is complete.
\end{proof}

For the balance of this section, when we say that $A=[i-1]\cup B$ is
meet-irreducible, we also mean that the requirements for membership
in $J(k,n)$ are satisfied by $i$ and $B$.

The next result is statement 3.

\begin{proposition}
  If $1\le k\le n-2$, then $\cdim(P(k,n))=\binom{n-1}{k}$.
\end{proposition}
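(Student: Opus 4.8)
The plan is to invoke Theorem~\ref{thm:cdim-width}, which says that $\cdim(P(k,n))$ equals the width of the subposet of meet-irreducible elements of $P(k,n)$, together with Proposition~\ref{pro:Jkn}, which identifies this subposet as $J(k,n)$. Thus it suffices to prove $\width(J(k,n))=\binom{n-1}{k}$, and I would establish this with matching lower and upper bounds.

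For the lower bound I would take $i=1$ in the description of $J(k,n)$: since then $[i-1]=\emptyset$, every $k$-element subset $B$ of $\{2,\dots,n\}$ is a meet-irreducible element of $P(k,n)$. These $\binom{n-1}{k}$ sets all have cardinality $k$, so they are pairwise incomparable and form an antichain, giving $\width(J(k,n))\ge\binom{n-1}{k}$.

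For the upper bound I would partition $J(k,n)$ into exactly $\binom{n-1}{k}$ chains and appeal to the trivial half of Dilworth's theorem (an antichain meets each chain at most once). Note first that $J(k,n)$ is ranked by cardinality: its elements have sizes $k,k+1,\dots,n-1$, and writing $L_s$ for the meet-irreducibles of size $s$, one checks that the level sizes are nonincreasing in $s$, with the bottom level the largest, $|L_k|=\binom{n-1}{k}$. The heart of the argument is an injection $f_s\colon L_{s+1}\to L_s$ for each $s$ with $k\le s\le n-2$, pairing each set with a proper subset of itself; concretely I would set $f_s(A)=A\setminus\{s-k+1\}$. Granting that (i)~the element $s-k+1$ always lies in $A$ and (ii)~$f_s(A)$ is again a meet-irreducible of size $s$, injectivity is immediate, and since each $f_s$ pairs comparable elements the union of the graphs of the $f_s$ is a disjoint union of chains. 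Counting components of this forest of paths, the number of chains is $|J(k,n)|-\sum_{s=k}^{n-2}|L_{s+1}|=|L_k|=\binom{n-1}{k}$, so $\width(J(k,n))\le\binom{n-1}{k}$ and the proof is complete.

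The main obstacle is verification~(ii), and in particular the behavior at the top level $L_{n-1}$. For a generic meet-irreducible $A=[i-1]\cup B$ with $|B|=k$ (where $i-1=s-k+1$), deleting $s-k+1$ simply yields $[i-2]\cup B$, which is visibly of the form required for membership in $J(k,n)$ and has size $s$; injectivity and the membership clauses of Proposition~\ref{pro:Jkn} are then routine. However, $L_{n-1}$ also contains the coatoms $[n]\setminus\{j\}$ with $j>n-k$, which fall under the clause $|B|<k$ in the definition of $J(k,n)$. For these one must check separately that deleting $n-k-1$ produces a size-$(n-2)$ meet-irreducible of the standard form $[n-k-2]\cup(\{n-k,\dots,n\}\setminus\{j\})$. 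Once the membership check is organized according to the two cases in the definition of $J(k,n)$, the remaining bookkeeping is elementary.
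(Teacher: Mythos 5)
Your proof is correct, and at the top level it mirrors the paper's: both reduce the claim to computing $\width(J(k,n))$ via Theorem~\ref{thm:cdim-width} and Proposition~\ref{pro:Jkn}, and both obtain the lower bound from the antichain of $k$-element meet-irreducibles. The difference lies in the chain covering, and here your route is not only genuinely different but more careful than the paper's. The paper covers $J(k,n)$ by the chains $\{[i-1]\cup B : 1\le i\le \min(B)-1\}$, one for each $k$-set $B$ with $1\notin B$, and asserts that every element of $J(k,n)$ lies in such a chain; this assertion fails for the $k$ coatoms $[n]\setminus\{j\}$ with $j\ge n-k+1$ (the elements arising from the clause $|B|<k$ in the definition of $J(k,n)$), since writing such a coatom as $[i'-1]\cup B'$ with $|B'|=k$ forces $i'=n-k=\min(B')$, violating the requirement that $i'$ be smaller than every element of $B'$. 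Your construction --- the injections $f_s(A)=A\setminus\{s-k+1\}$ together with the component count of the resulting forest of paths --- produces the same chains as the paper's whenever $\min(B)\neq n-k$, but when $B=\{n-k,\dots,n\}\setminus\{j\}$ it appends the otherwise-missed coatom $[n]\setminus\{j\}$ on top of the corresponding chain; the top-level case distinction you flag as ``the main obstacle'' is exactly the point that the paper's own argument overlooks. Your verifications (i) and (ii) go through just as you sketch (for a type-$|B|=k$ element of size $s+1$ one has $s-k+1=i-1$, so deletion yields $[i-2]\cup B$, and for a coatom it yields $[n-k-2]\cup(\{n-k,\dots,n\}\setminus\{j\})$, both visibly in $J(k,n)$), and injectivity is immediate since $A$ is recovered from $f_s(A)$ by adding back $s-k+1$. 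One inessential remark: the monotonicity of the level sizes $|L_s|$ is a consequence of your injections rather than an input to the argument, so you may omit it. In short, both approaches buy the same bound, but yours is the one that is actually complete, and it can be read as a repair of the published proof.
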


\begin{proof}
  Let $J=J(k,n)$. Then set $w=\binom{n-1}{k}$. 
  The set $\cgA$ of all meet-irreducible elements of $P(k,n)$ of size $k$ is 
  an antichain in $J$.  Note that a $k$-element set $A\subset[n]$ belongs to
  $J$ if and only if $1\not\in A$.  It follows that the width of 
  $J$ is at least $|\cgA|=w$.

  To show that $\width(J)\le w$, we construct an explicit covering of $J$
  using $w$ chains.  Let $B$ be a $k$-element subset of $[n]$ with $1\not\in B$,
  and let $j=\min(B)$.  Then $j\ge2$, and the sets in
  $\{[i-1]\cup B: 1\le i\le j-1\}$ form a chain of size~$j-1$ in $J(k,n)$.
  Clearly, every element of $J(k,n)$ is contained in a unique chain of this
  form.
\end{proof}

The next result implies statement 4.

\begin{proposition}\label{pro:dd}
  Let $A$ be a non-empty set in $P(k,n)$.  Then $\dd(A,P(k,n))=1$ if and only if
  $A$ is a singleton.  Furthermore, $\maxdd(P(k,n))=k+1$.
\end{proposition}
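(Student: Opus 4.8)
The plan is to read off $\dd(A,P(k,n))$ directly from the \emph{Membership} requirement. The key preliminary observation is that, being a convex geometry, $P(k,n)$ is graded with the rank of a set equal to its cardinality (this is the gradedness noted after Theorem~\ref{thm:meet-dist}, and it is exactly the single-element-cover fact already used in the proof of Proposition~\ref{pro:Jkn}). Consequently $A$ covers precisely the sets $A\setminus\{\alpha\}$ with $\alpha\in A$ and $A\setminus\{\alpha\}\in P(k,n)$, so that
\[
  \dd(A,P(k,n))=\#\{\alpha\in A: A\setminus\{\alpha\}\in P(k,n)\}.
\]
Everything then reduces to deciding, for each $\alpha\in A$, whether the deletion still satisfies \emph{Membership}, and this depends only on $|A|$.

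First I would handle small sets. If $|A|\le k$, then $A\setminus\{\alpha\}$ has size at most $k-1\le k$, and every subset of $[n]$ of size at most $k$ lies in $P(k,n)$; hence every one-element deletion is admissible and $\dd(A,P(k,n))=|A|$. Since $k\ge1$, this shows that among sets of size at most $k$ the down degree is $1$ exactly for the singletons, and never exceeds $k$.

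Next I would handle large sets. Write $|A|=k+i-1$ with $i\ge2$, so that \emph{Membership} forces $[i-1]\subseteq A$. Deleting $\alpha$ yields a set of size $k+(i-1)-1$, whose \emph{Membership} requirement is $[i-2]\subseteq A\setminus\{\alpha\}$; since $[i-1]\subseteq A$, this holds if and only if $\alpha\notin[i-2]$, the convention $[0]=\emptyset$ disposing of the boundary case $i=2$. The admissible elements are thus exactly those of $A\setminus[i-2]$, of which there are $|A|-(i-2)=(k+i-1)-(i-2)=k+1$. Hence every set of size at least $k+1$ has down degree exactly $k+1$.

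Combining the cases settles both assertions simultaneously: $\dd(A,P(k,n))=1$ if and only if $A$ is a singleton, and the down degrees that occur are $1,\dots,k$ from the small sets together with $k+1$ from every large set. To pin down $\maxdd$ I need only exhibit a single set of size $k+1$ in $P(k,n)$, and since $k\le n-2$ the set $[k+1]$ works, giving $\maxdd(P(k,n))=k+1$. I expect no real obstacle: the computation is mechanical, and the only points needing care are tracking the index $i=|A|-k+1$ as a function of $|A|$ and checking that the count $k+1$ persists at the boundary $i=2$, where the deleted set has size exactly $k$ and is automatically feasible.
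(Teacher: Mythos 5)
Your proof is correct and takes essentially the same approach as the paper: both arguments use gradedness to identify the sets covered by $A$ with the admissible one-element deletions $A\setminus\{\alpha\}$, and then count these via the Membership condition according to $|A|$. The only immaterial difference is where the case split falls (you split at $|A|\le k$ versus $|A|\ge k+1$, the paper at $|A|\le k+1$ versus $|A|\ge k+2$) and that you explicitly exhibit $[k+1]$ to witness that the maximum down degree $k+1$ is attained.
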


\begin{proof}
  Evidently, a singleton set $A=\{i\}$ satisfies $\dd(A,P(k,n))=1$ since $\emptyset$
  is the only set covered by $A$.  Now let $A$ be a set in $P(k,n)$ with
  $|A|\ge2$.  If $|A|\le k+1$, then $A$ covers $A-\{\alpha\}$ for every $\alpha\in A$. Since the poset is graded $A$ cannot cover any further elements. Thus, $\dd(A,P(k,n))=|A|$, which by assumption is between $2$ and $k+1$.  Now suppose $|A|=k+i-1$ for some
  $i$ with $3\le i\le n-k+1$.  Then by the definition of $P(k,n)$ and the fact that it is graded $A$ covers $A-\{\alpha\}$ if and only if $\alpha\in A$ with $\alpha\ge i-1$.  We conclude that $\dd(A,P(k,n))=k+1$. 
  With this observation, the proof of the first statement of the proposition is
  complete. Furthermore, we have shown that $\dd(A,P(k,n))\leq k+1$ for all $A$ and this equality also attained. Hence, $\maxdd(P(k,n))=k+1$.
\end{proof}

With Propositions~\ref{pro:crit-pair-dd-ud} and~\ref{pro:J-critP}, we noted that
there is a $1$--$1$ correspondence between the set $J(k,n)$ of meet-irreducible 
elements of $P(k,n)$ and the set of critical pairs of $P(k,n)$.  Using the fact 
that all singletons belong to $P(k,n)$, it follows that the critical pairs
of $P(k,n)$ consist of all pairs of the form $(\{i\}, [i-1]\cup B)$ where
$[i-1]\cup B$ is a meet-irreducible element of $P(k,n)$.

The following proposition is stated for emphasis.
It is an immediate consequence of the rule for the $1$--$1$ correspondence.

\begin{proposition}\label{pro:Pkn-S2}
  Let $A=[i-1]\cup B$ and $A'=[j-1]\cup C$ be meet irreducible elements of $P(k,n)$
  with $i\le j$.  Also, let $(X,Y)$ and $(X',Y')$ be the critical pairs
  of $P(k,n)$ associated with $A$ and $A'$ respectively.  Then
  $((X,Y),(X',Y'))$ is a strict alternating cycle of size~$2$ if and only if 
  $i<j$ and $j\in B$.
\end{proposition}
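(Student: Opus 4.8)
The plan is to unwind the definition of a strict alternating cycle of size~$2$ and reduce it to two explicit set-membership questions. By the correspondence recorded in the discussion following Proposition~\ref{pro:dd}, the critical pair associated with the meet-irreducible element $A=[i-1]\cup B$ is $(X,Y)=(\{i\},A)$, and likewise $(X',Y')=(\{j\},A')$. So the entire statement will come down to understanding when the two inclusions $\{i\}\subseteq A'$ and $\{j\}\subseteq A$ both hold.

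First I would specialize the definition of a strict alternating cycle to the case $k=2$. For the pair of critical pairs $((X,Y),(X',Y'))$, the alternating-cycle requirement is $X\le_P Y'$ and $X'\le_P Y$, while strictness additionally demands $X\not\le_P Y$ and $X'\not\le_P Y'$. The latter two non-relations are automatic: since $(X,Y)$ and $(X',Y')$ are critical pairs, we have $X\parallel_P Y$ and $X'\parallel_P Y'$. Hence $((X,Y),(X',Y'))$ is a strict alternating cycle of size~$2$ if and only if $X\subseteq Y'$ and $X'\subseteq Y$, and I would state this reduction explicitly before computing.

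Next I would translate the two surviving inclusions. The inclusion $X\subseteq Y'$ reads $i\in[j-1]\cup C$. Using the standing hypothesis $i\le j$: if $i<j$, then $i\in[j-1]$ and the inclusion holds; if $i=j$, then $i\notin[j-1]$, and the defining property of $J(k,n)$ that every element of $C$ exceeds $j$ gives $j=i\notin C$, so the inclusion fails. Thus $X\subseteq Y'$ holds exactly when $i<j$. The inclusion $X'\subseteq Y$ reads $j\in[i-1]\cup B$; since $j\ge i>i-1$ we cannot have $j\in[i-1]$, so this inclusion holds exactly when $j\in B$. Combining the two, the pair is a strict alternating cycle of size~$2$ if and only if $i<j$ and $j\in B$, which is the claim.

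I do not expect a genuine obstacle here; the argument is essentially bookkeeping once the correspondence $A\mapsto(\{i\},A)$ is in hand. The only point requiring care is the boundary case $i=j$, where one must invoke the defining property of $J(k,n)$ — namely that $i<c$ for every $c\in C$ — to conclude that $i\notin Y'$ and hence that no cycle can arise when $i=j$.
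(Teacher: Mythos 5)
Your proof is correct, and it is exactly the argument the paper has in mind: the paper offers no written proof, calling the proposition ``an immediate consequence of the rule for the $1$--$1$ correspondence,'' and your write-up simply makes that immediacy explicit by unwinding the size-$2$ strict alternating cycle definition into the two inclusions $\{i\}\subseteq[j-1]\cup C$ and $\{j\}\subseteq[i-1]\cup B$ and settling each via the defining conditions on $J(k,n)$. Your handling of the boundary case $i=j$ (using that every element of $C$ exceeds $j$) is the right point of care and completes the bookkeeping cleanly.
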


The next proposition is only marginally more complex.

\begin{proposition}\label{pro:Pkn-sac}
  Let $m\ge2$, and let $(A_1,\dots,A_m)$ be a sequence of
  meet-irreducible elements  of $P(k,n)$.  For each $j\in[m]$,
  let $(X_j,Y_j)$ be the critical pair associated with
  $A_j$.   If the sequence $(((X_1,Y_1),\dots,(X_m,Y_m))$
  is a strict alternating cycle, then
  then $m=2$.
\end{proposition}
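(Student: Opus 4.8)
The plan is to reduce the statement to an inequality chain among the ``indices'' of the meet-irreducible elements, using the explicit description of critical pairs recorded just before Proposition~\ref{pro:Pkn-S2}. Writing each $A_\ell=[i_\ell-1]\cup B_\ell$ in the form guaranteed by Proposition~\ref{pro:Jkn}, the critical pair associated with $A_\ell$ is $(X_\ell,Y_\ell)=(\{i_\ell\},A_\ell)$. A comparison $X_\ell\le_P Y_p$ is then simply the membership statement $i_\ell\in A_p=[i_p-1]\cup B_p$, which by the form of $A_p$ holds exactly when either $i_\ell<i_p$ (so $i_\ell\in[i_p-1]$) or $i_\ell\in B_p$ (forcing $i_\ell>i_p$). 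Thus the whole problem becomes a purely combinatorial statement about the integers $i_1,\dots,i_m$ and the sets $B_1,\dots,B_m$.

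First I would record, for each pair of cyclically consecutive indices, the two pieces of information that a strict alternating cycle supplies. The alternating-cycle condition $X_{\ell-1}\le_P Y_\ell$ gives $i_{\ell-1}\in A_\ell$; since $i_\ell\notin[i_\ell-1]$ while every element of $B_\ell$ exceeds $i_\ell$, this already forces $i_{\ell-1}\ne i_\ell$. Strictness, applied to the comparison $X_\ell\le_P Y_{\ell-1}$---which strictness requires to fail, since $\ell-1\not\equiv\ell+1\pmod m$ whenever $m>2$---yields $i_\ell\notin A_{\ell-1}=[i_{\ell-1}-1]\cup B_{\ell-1}$. In particular $i_\ell\notin[i_{\ell-1}-1]$, so $i_\ell\ge i_{\ell-1}$; combined with $i_\ell\ne i_{\ell-1}$ this gives $i_\ell>i_{\ell-1}$ for every $\ell\in[m]$.

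Finally, since the strict inequality $i_\ell>i_{\ell-1}$ holds cyclically, traversing the cycle produces $i_1>i_m>i_{m-1}>\cdots>i_1$, which is absurd. Hence $m>2$ is impossible, and together with the hypothesis $m\ge2$ this forces $m=2$, as claimed.

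I expect the one genuinely delicate point to be the \emph{direction} of the consecutive inequalities. The plain alternating-cycle requirement $X_\ell\le_P Y_{\ell+1}$ does not by itself decide whether $i_\ell<i_{\ell+1}$ or $i_\ell>i_{\ell+1}$: membership of $i_\ell$ in $A_{\ell+1}$ can occur ``upward,'' through the initial segment $[i_{\ell+1}-1]$, or ``downward,'' through $B_{\ell+1}$---exactly the dichotomy visible in Proposition~\ref{pro:Pkn-S2}. It is only the strictness of the cycle---available in the form $i_\ell\notin A_{\ell-1}$ precisely because $m>2$ makes $\ell-1$ and $\ell+1$ distinct indices---that eliminates the downward case and makes the cyclic monotonicity, and hence the contradiction, go through.
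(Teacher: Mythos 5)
Your proof is correct. It starts from the same translation the paper uses---each critical pair is $(X_\ell,Y_\ell)=(\{i_\ell\},[i_\ell-1]\cup B_\ell)$, so that $X_\ell\le_P Y_p$ means exactly $i_\ell\in[i_p-1]\cup B_p$---but the finishing argument is different. The paper first invokes the general fact that in a strict alternating cycle the elements $X_1,\dots,X_m$ form an antichain, so the indices $i_1,\dots,i_m$ are pairwise distinct; it then relabels so that $i_1$ is minimum, observes that $\{i_1\}\subset[i_j-1]\subset A_j$ gives $X_1\le_P Y_j$ for \emph{every} $j\ge2$, and lets strictness (which permits only $j=2$) force $m=2$ in one stroke. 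You instead assume $m>2$, apply strictness to the backward comparisons $X_\ell\le_P Y_{\ell-1}$ (legitimately, since $\ell-1\not\equiv\ell+1 \pmod m$ exactly when $m>2$) to rule out downward membership and conclude $i_\ell\ge i_{\ell-1}$, and combine this with the distinctness of consecutive indices (from the forward cycle condition and $i_\ell\notin A_\ell$) to produce a cyclically increasing chain $i_1>i_m>\dots>i_1$, which is absurd. In effect you use the contrapositive of the paper's key observation, applied to consecutive pairs rather than to the minimum index: this costs slightly more bookkeeping, but it avoids both the WLOG relabeling and the appeal to the antichain property of strict alternating cycles, relying only on the definitional conditions of the cycle. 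Both arguments are sound; the paper's extremal argument is marginally shorter.
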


\begin{proof}
  For each $j\in[m]$, let $A_j=[i_j-1]\cup B_j$.  Then $(X_j,Y_j)=
  (\{i_j\}, A_j)$.  Since the alternating cycle is strict, the
  elements of $\{i_1,\dots,i_m\}$ are distinct. With a relabeling if necessary,
  we may assume that $i_1<i_j$ for all $j=2,\dots,m$. 

  It follows that for each $j=2,\dots,m$,  $X_1=\{i_1\}\subset 
  [i_j-1]\subset A_j=Y_j$.  Again, since the alternating cycle is strict,
  this now requires $m=2$.
\end{proof}

If $1\le k\le n-2$ and $t$ is a positive integer, a sequence
$(Y_1,\dots,Y_n)$ of subsets of $[t]$ is said to be
$(k,n)$-\emph{distinguishing} if for every meet-irreducible
element $A=[i-1]\cup B\in J(k,n)$, there is an element $\alpha\in Y_i$ 
such that $\alpha\not\in Y_j$ whenever $j\in B$.

\begin{proposition}\label{pro:dim-translation}
  If $1\le k\le n-2$, $\dim(P(k,n))$ is the least positive integer $t$
  for which there is a $(k,n)$-distinguishing sequence $(Y_1,\dots,Y_n)$ 
  of subsets of $[t]$.
\end{proposition}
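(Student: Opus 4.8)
The plan is to reduce the computation of $\dim(P(k,n))$ to a covering problem on the meet-irreducibles and then to recognize a $(k,n)$-distinguishing sequence as exactly the data needed to solve that covering problem. Recall from Section~\ref{sec:dimension-concepts} that, since $P(k,n)$ is not a chain, $\dim(P(k,n))$ is the least $t\ge2$ for which $\crit(P(k,n))$ can be covered by $t$ reversible sets, and that a set of critical pairs is reversible precisely when it contains no strict alternating cycle. By Proposition~\ref{pro:Pkn-sac} every strict alternating cycle among the critical pairs of $P(k,n)$ has size exactly $2$, so a set of critical pairs is reversible if and only if it contains no strict alternating cycle of size $2$. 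Calling $i$ the \emph{index} of the meet-irreducible $[i-1]\cup B$, Proposition~\ref{pro:Pkn-S2} tells us that the critical pairs associated with $A=[i-1]\cup B$ and $A'=[j-1]\cup C$ (with $i\le j$) form such a cycle exactly when $i<j$ and $j\in B$; I will call such a pair of meet-irreducibles \emph{conflicting}. Thus $\dim(P(k,n))$ equals the least $t\ge2$ for which the meet-irreducibles can be colored with $t$ colors so that no color class contains a conflicting pair, and the proposition reduces to matching this quantity against the least $t$ admitting a $(k,n)$-distinguishing sequence of subsets of $[t]$.

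For one inequality, suppose $(Y_1,\dots,Y_n)$ is a $(k,n)$-distinguishing sequence of subsets of $[t]$ with $t\ge2$. For each meet-irreducible $A=[i-1]\cup B$ the distinguishing property furnishes a color $\alpha(A)\in Y_i$ with $\alpha(A)\notin Y_j$ for all $j\in B$, and I color $A$ by $\alpha(A)$. If $A=[i-1]\cup B$ and $A'=[j-1]\cup C$ receive the same color $c$ with $i<j$, then $c=\alpha(A)\in Y_i$ and $c=\alpha(A')\in Y_j$; were $A,A'$ conflicting we would have $j\in B$, whence $\alpha(A)\notin Y_j$, contradicting $c\in Y_j$. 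Hence no color class contains a conflicting pair, the corresponding critical pairs cover $\crit(P(k,n))$ by $t$ reversible sets, and $\dim(P(k,n))\le t$.

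For the reverse inequality, fix a cover of $\crit(P(k,n))$ by reversible sets indexed by $[d]$, where $d=\dim(P(k,n))\ge2$, and for each meet-irreducible $A$ pick a color $\phi(A)\in[d]$ whose reversible set contains the critical pair of $A$. Define $Y_i=\{\phi(A'): A' \text{ is a meet-irreducible of index } i\}$ for each $i\in[n]$. Then for $A=[i-1]\cup B$ with $c=\phi(A)$ we have $c\in Y_i$, and for $j\in B$ the assumption $c\in Y_j$ would produce a meet-irreducible of index $j$ colored $c$; since $i<j$ and $j\in B$, that meet-irreducible and $A$ would be conflicting yet share a reversible set, a contradiction. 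Hence $c\notin Y_j$ for all $j\in B$, so $(Y_1,\dots,Y_n)$ is $(k,n)$-distinguishing and the least distinguishing $t$ is at most $d$. A short separate check rules out $t=1$: there is a meet-irreducible of index $1$ whose set $B$ contains the index $j$ of another meet-irreducible, forcing $Y_j$ to both contain and exclude the single color. Combining the two inequalities then gives the claimed equality.

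The step I expect to demand the most care is keeping the asymmetry of the conflict relation aligned across the two constructions: the conflict between $A$ and $A'$ requires the \emph{larger} index to lie in the set $B$ of the meet-irreducible of \emph{smaller} index, and the definition $Y_i=\{\phi(A'):A'\text{ of index }i\}$ must pool together all meet-irreducibles sharing an index. Verifying that these quantifier choices make the color-to-sequence and sequence-to-color passages genuinely inverse is exactly what turns the two inequalities into a clean correspondence.
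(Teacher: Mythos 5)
Your proof is correct and takes essentially the same route as the paper's: both directions translate $\dim(P(k,n))$ into covering the critical pairs by reversible sets and invoke Propositions~\ref{pro:Pkn-S2} and~\ref{pro:Pkn-sac} to identify reversibility with the absence of conflicting pairs, with the same constructions of the color classes (the paper's sets $S_\alpha$) and of the sets $Y_i$. Your explicit elimination of the case $t=1$ is a small extra precaution that the paper's proof leaves implicit.
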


\begin{proof}
  Suppose first that $\dim(P(k,n))=t$.
  We show that there is a sequence $(Y_1,\dots,Y_n)$ of subsets of $[t]$ that
  is $(k,n)$-distinguishing.  Let $S_1,\dots,S_t$ be 
  reversible sets covering all critical pairs of $P(k,n)$.  For each
  $i\in[n]$, let $Y_i$ consist of all $\alpha\in[t]$ such that
  there is a critical pair $(X,Y)=(\{i\},[i-1]\cup B)$ assigned to $S_\alpha$.
  We claim that $(Y_1,\dots,Y_n)$ is $(k,n)$-distinguishing.
  To see this, let $[i-1]\cup B$ be any meet-irreducible element
  of $P(k,n)$.  Then there is some $\alpha\in[t]$ such that the
  critical pair $(X,Y)=(\{i\},[i-1]\cup B)$ is assigned to $S_\alpha$.
  We claim that $\alpha\not\in Y_j$ for all $j$ with $j\in B$.
  If this fails, there is a meet-irreducible element $[j-1]\cup C$ such
  that the critical pair $(X',Y')=(\{j\},[j-1]\cup C)$ is also assigned to
  $S_\alpha$.  However, $i<j$ and $j\in B$ imply that $((X,Y),(X',Y'))$
  is a strict alternating cycle, contradicting the fact that $S_\alpha$
  is reversible.  We conclude that the sequence $(Y_1,\dots,Y_n)$ is 
  $(k,n)$-distinguishing as claimed.

  For the converse, suppose that $t$ is a positive integer, and there is
  a sequence $(Y_1,\dots,Y_n)$ of subsets of $[t]$ that is $(k,n)$-distinguishing.
  We show $\dim(P(k,n))\le t$.  To accomplish this, for each $\alpha\in[t]$,
  we let $S_\alpha$ consist of all critical pairs $(X,Y)=
  (\{i\},[i-1]\cup B)$ such that $\alpha\in Y_i$ and $\alpha\not\in Y_j$ whenever
  $j\in B$. Using Propositions~\ref{pro:Pkn-S2} and~\ref{pro:Pkn-sac},
  for each $\alpha\in[t]$, the set $S_\alpha$ is reversible, so $\dim(P(k,n))\le t$.
\end{proof}

For each convex geometry in the family $\{P(1,n):n\ge3\}$, we now determine the
value of the dimension \emph{exactly}. The following result is statement~5a.

\begin{proposition}
  If $n\ge3$, then $\dim(P(1,n))=1+\lfloor \lg n \rfloor$.
\end{proposition}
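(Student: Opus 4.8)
The plan is to invoke Proposition~\ref{pro:dim-translation}, which reduces computing $\dim(P(1,n))$ to finding the least $t$ admitting a $(1,n)$-distinguishing sequence $(Y_1,\dots,Y_n)$ of subsets of $[t]$. So the first step is to unwind what ``$(1,n)$-distinguishing'' means once $k=1$ is substituted. Using the description of meet-irreducibles from Proposition~\ref{pro:Jkn}, the elements of $J(1,n)$ are exactly the sets $[i-1]\cup\{j\}$ with $1\le i<j\le n$, together with the single set $[n-1]$ (the case $B=\emptyset$, which forces $i=n$). Translating the distinguishing requirement over these meet-irreducibles yields precisely two conditions: (a)~$Y_i\not\subseteq Y_j$ for all $1\le i<j\le n$, arising from each $[i-1]\cup\{j\}$ (which demands some $\alpha\in Y_i\setminus Y_j$); and (b)~$Y_n\neq\emptyset$, arising from $[n-1]$, whose block $B$ is empty so the only demand is that $Y_n$ be inhabited.

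With this reformulation the problem becomes a clean extremal question: how large can $n$ be so that $n$ subsets of $[t]$ satisfy (a) and (b)? For the lower bound on $t$, I observe that (a) forces the $Y_i$ to be pairwise distinct and forbids $\emptyset$ from appearing anywhere except possibly the last position (since $\emptyset\subseteq Y_j$ for every $j$); then (b) rules out the last position as well. Hence $Y_1,\dots,Y_n$ are $n$ distinct nonempty subsets of $[t]$, giving $n\le 2^t-1$, i.e.\ $t\ge\lceil\lg(n+1)\rceil$.

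For the matching upper bound I take $t=\lceil\lg(n+1)\rceil$, so that $2^t-1\ge n$, choose any $n$ distinct nonempty subsets of $[t]$, and list them in any linear extension of the reverse-inclusion order (larger sets first). For $i<j$ one can never have $Y_i\subseteq Y_j$, since distinctness together with $Y_i\subseteq Y_j$ would give $Y_j\supsetneq Y_i$, forcing $Y_j$ to precede $Y_i$; this establishes (a), while (b) holds automatically as every chosen set is nonempty. Thus a $(1,n)$-distinguishing sequence over $[t]$ exists, and by Proposition~\ref{pro:dim-translation}, $\dim(P(1,n))=\lceil\lg(n+1)\rceil$.

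I then close by converting $\lceil\lg(n+1)\rceil$ into the stated form $1+\lfloor\lg n\rfloor$ via the standard ``number of bits'' identity: writing $s=1+\lfloor\lg n\rfloor$ one has $2^{s-1}\le n<2^s$, hence $2^{s-1}<n+1\le 2^s$, so $\lceil\lg(n+1)\rceil=s$. I expect the only genuinely substantive step to be the correct specialization of the distinguishing condition, and in particular recognizing that the set $[n-1]$ contributes exactly the extra constraint $Y_n\neq\emptyset$. This is precisely what drives the count down from $2^t$ to $2^t-1$ and makes the answer come out at $\lceil\lg(n+1)\rceil$ rather than $\lceil\lg n\rceil$ (these differ exactly when $n$ is a power of $2$); the remaining extremal and arithmetic steps are routine.
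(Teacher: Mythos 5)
Your proof is correct and takes essentially the same route as the paper: both reduce the problem via Proposition~\ref{pro:dim-translation} to the extremal question of how many subsets of $[t]$ can form a $(1,n)$-distinguishing sequence, obtain the lower bound by showing the sets must be pairwise distinct and non-empty (hence $n\le 2^t-1$), and obtain the upper bound by listing subsets in an order compatible with reverse inclusion so that no earlier set is contained in a later one. The only cosmetic difference is that you choose any $n$ distinct non-empty subsets of $[t]$ directly, whereas the paper truncates the dual of a linear extension of the full Boolean algebra $\bftwo^s$; the two constructions are interchangeable.
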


\begin{proof}
  Let $t=\dim(P(1,n))$ and $s=1+\lfloor\lg n\rfloor$.  Then there
  is a $(1,n)$-distinguishing sequence $(Y_1,\dots,Y_n)$ of subsets of
  $[t]$.   When $1\le i<j\le n$, the meet-irreducible
  set $[i-1]\cup \{j\}$ requires $Y_i-Y_j\neq\emptyset$.  In particular,
  this requires $Y_i\neq Y_j$ and $Y_i\neq\emptyset$.  The meet-irreducible
  set $[n-1]$ requires $Y_n\neq\emptyset$.  We have now shown that
  the sets in the sequence $(Y_1,\dots,Y_n)$ are distinct \emph{and}
  non-empty.  This requires
  $t\ge s$. 

  For the converse, consider the family of all subsets of $[s]$
  partially ordered by inclusion.  Set $m=2^s$, and consider an
  arbitrary linear extension $L$ of the Boolean algebra $\bftwo^s$.  
  Then let $(Y_1,\dots,Y_m)$ be the dual of $L$. Note that
  $Y_1=[s]$ and $Y_m=\emptyset$.  Since by assumption $n<2^s=m$, the last set $Y_n$ in
  the subsequence $(Y_1,\dots,Y_n)$ is non-empty.  It follows that this sequence
  is $(1,n)$-distinguishing. Indeed, for all $i\in [n]$ we have that $Y_i$ has an element $\alpha$ (because $Y_i\neq\emptyset$) that is in no $Y_j$ for $j>i$ (because $Y_j$ does not contain $Y_i$ because $L$ is  linear extension). Therefore, $t\le s$ by Proposition~\ref{pro:dim-translation}.
\end{proof}

The following result is statement~5b.
The proof is an elementary application of the
probabilistic method.

\begin{proposition}\label{pro:erdos}
  For $2\le k\le n-2$, $\dim(P(k,n))\le (k+1)2^{k+2}\log n$.
\end{proposition}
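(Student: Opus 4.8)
The plan is to invoke Proposition~\ref{pro:dim-translation}, which reduces the task to exhibiting, for some integer $t\le(k+1)2^{k+2}\log n$, a $(k,n)$-distinguishing sequence $(Y_1,\dots,Y_n)$ of subsets of $[t]$. Recall from Proposition~\ref{pro:Jkn} that every meet-irreducible element of $P(k,n)$ has the form $A=[i-1]\cup B$ with $B\subseteq\{i+1,\dots,n\}$ and $|B|\le k$, and that the distinguishing requirement for $A$ asks for an $\alpha\in Y_i$ with $\alpha\notin Y_j$ for all $j\in B$; equivalently, $Y_i\not\subseteq\bigcup_{j\in B}Y_j$. First I would produce $(Y_1,\dots,Y_n)$ at random and show it is $(k,n)$-distinguishing with positive probability.

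Concretely, fix $t=\lfloor(k+1)2^{k+2}\log n\rfloor$ and, for each $\alpha\in[t]$ and each $i\in[n]$, place $\alpha$ in $Y_i$ independently with probability $\tfrac12$. For a fixed meet-irreducible $A=[i-1]\cup B$ and a fixed $\alpha\in[t]$, the event ``$\alpha\in Y_i$ and $\alpha\notin Y_j$ for all $j\in B$'' involves $|B|+1\le k+1$ independent fair coins, since the index $i$ and the indices in $B$ are all distinct (as $i<j$ for every $j\in B$); hence it has probability $2^{-(|B|+1)}\ge 2^{-(k+1)}$. As the $t$ elements $\alpha$ act independently, the probability that $A$ fails to be distinguished is at most $(1-2^{-(k+1)})^{t}\le e^{-t/2^{k+1}}$, a bound that is uniform over all meet-irreducibles, the short-$B$ (special) cases being only more favorable.

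Next I would apply a union bound over the meet-irreducible elements. Since each is determined by a pair $(i,B)$ with $i\in[n]$ and $B$ a subset of $\{i+1,\dots,n\}$ of size at most $k$, their number $N$ satisfies $N\le n\bigl(\binom{n}{k}+1\bigr)\le 2n^{k+1}$. Hence
\[
  \Pr[\text{some meet-irreducible is not distinguished}]\le 2n^{k+1}\,e^{-t/2^{k+1}},
\]
and it remains to verify the right-hand side is strictly below $1$. Taking logarithms, this amounts to $t/2^{k+1}>\log 2+(k+1)\log n$, and since $t>(k+1)2^{k+2}\log n-1=2\cdot2^{k+1}(k+1)\log n-1$, the inequality reduces to the elementary estimate $2^{k+1}\bigl((k+1)\log n-\log 2\bigr)\ge 1$, which holds comfortably for $2\le k\le n-2$; the factor-of-two slack absorbs the rounding in the definition of $t$. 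Thus a $(k,n)$-distinguishing sequence of subsets of $[t]$ exists, and Proposition~\ref{pro:dim-translation} yields $\dim(P(k,n))\le t\le(k+1)2^{k+2}\log n$.

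The computation is routine, and I expect no genuine obstacle. The two points needing a little care are (i)~bounding the number of meet-irreducible elements by the polynomial $n^{k+1}$ rather than by the total number of subsets, so that its logarithm contributes only an $O\bigl((k+1)\log n\bigr)$ term, and (ii)~observing that the per-element distinguishing probability is smallest in the full-size case $|B|=k$, so that the single uniform failure bound $(1-2^{-(k+1)})^t$ simultaneously governs every meet-irreducible. Matching the union bound to the stated constant $2^{k+2}$ is precisely why the insertion probability $\tfrac12$ is the natural (and sufficient) choice here.
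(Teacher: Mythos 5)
Your proposal is correct and follows essentially the same route as the paper's proof: the same random construction (each $\alpha\in[t]$ placed in each $Y_i$ independently with probability $\tfrac12$ for $t=\lfloor(k+1)2^{k+2}\log n\rfloor$), the same union bound over the at most polynomially many (roughly $n^{k+1}$) meet-irreducible elements, and the same per-element failure bound $(1-2^{-(k+1)})^{t}$, all funneled through Proposition~\ref{pro:dim-translation}. Your writeup additionally spells out the ``simple calculations'' the paper leaves to the reader, and your exponent $2^{-(k+1)}$ corrects what is evidently a typo in the paper's displayed bound, which reads $1-\frac{1}{2^{t+1}}$ where $1-\frac{1}{2^{k+1}}$ is intended.
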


\begin{proof}
  Let $t=\lfloor(k+1)2^{k+2}\log n\rfloor$.  Then consider
  a sequence $\sigma=(Y_1,Y_2,\dots,Y_n)$ of random subsets of $[t]$, i.e.
  for each pair $(i,\alpha)$ with $i\in[n]$ and $\alpha\in[t]$, we assign $\alpha$
  to the set $Y_i$ with probability~$1/2$.  Assignments made for distinct pairs
  are independent.  The probability that $\sigma$ fails to be 
  $(k,n)$-distinguishing is at most: 
  \begin{equation}\label{eqn:erdos} 
    n^{k+1}(1-\frac{1}{2^{t+1}})^t.
  \end{equation}
  Simple calculations show that for the specified value of $t$, the expression
  given in~(\ref{eqn:erdos}) is less than~$1$.  It follows that a $(k,n)$-distinguishing
  sequence $\sigma=(Y_1,\dots,Y_n)$ exists.  This implies that $\dim(P(k,n))\le t$.
\end{proof}

Statement~6 follows from Theorem~\ref{thm:vcdim=se}.  However, there is
a direct proof which is more revealing of the structure of these geometries.

\begin{proposition}\label{pro:se}
  If $1\le k\le n-2$, $\se(P(k,n))=k+1$.
\end{proposition}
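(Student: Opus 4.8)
The plan is to prove the two inequalities $\se(P(k,n))\ge k+1$ and $\se(P(k,n))\le k+1$ separately, in both directions exploiting the explicit description (recorded just after Proposition~\ref{pro:dd}) of the critical pairs of $P(k,n)$ as exactly the pairs $(\{i\},[i-1]\cup B)$ with $[i-1]\cup B\in J(k,n)$.

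For the lower bound I will exhibit a copy of $S_{k+1}$. Since every subset of $[k+1]$ of size at most $k$ lies in $P(k,n)$, when $k\ge2$ I can simply take the $k+1$ atoms $\{m\}$ together with the $k+1$ coatoms $[k+1]-\{m\}$ for $m\in[k+1]$. As $\{m\}\subseteq[k+1]-\{\ell\}$ holds exactly when $m\ne\ell$, and the coatoms all have the common size $k\ge2$, these $2(k+1)$ distinct sets induce $S_{k+1}$. This is precisely the Boolean algebra $\bftwo^{k+1}$ that Theorem~\ref{thm:BA} furnishes once we know $\maxdd(P(k,n))=k+1$ from Proposition~\ref{pro:dd}. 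The case $k=1$ must be handled by hand, because the corresponding algebra is $\bftwo^2$, which does \emph{not} contain $S_2$; here I instead take the four sets $\{2\},\{3\},\{1,2\},\{1,3\}$, all of which lie in $P(1,n)$ since $n\ge3$, and check directly that they induce $S_2$.

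For the upper bound I suppose $\se(P(k,n))=t\ge2$. By Proposition~\ref{pro:cg-se} I may choose a copy of $S_t$ labeled so that each pair $(a_\ell,b_\ell)$ is a critical pair, hence of the form $(\{i_\ell\},A_\ell)$ with $A_\ell=[i_\ell-1]\cup B_\ell\in J(k,n)$. The minimal elements $\{i_\ell\}$ are distinct singletons, so the indices $i_\ell$ are distinct; relabel so that $i_1<\dots<i_t$. The defining relations of $S_t$ give, for every $\ell<m$, that $a_\ell\le b_m$, $a_m\le b_\ell$, $a_\ell\not\le b_\ell$, and $a_m\not\le b_m$; that is, the two critical pairs associated with $A_\ell$ and $A_m$ form a strict alternating cycle of size~$2$. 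Proposition~\ref{pro:Pkn-S2} (applied with the smaller index $i_\ell$ in the role of $i$) then forces $i_m\in B_\ell$. Taking $\ell=1$ yields $\{i_2,\dots,i_t\}\subseteq B_1$, whence $t-1\le|B_1|\le k$, and so $t\le k+1$.

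The routine part is verifying that the exhibited families induce the intended standard examples; the one genuinely delicate point is the lower bound in the exceptional case $k=1$, where the canonical atom/coatom construction degenerates into $\bftwo^2$ and a separate four-element family must be produced. Everything else follows mechanically from Proposition~\ref{pro:cg-se}, the critical-pair correspondence, and Proposition~\ref{pro:Pkn-S2}.
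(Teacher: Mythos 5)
Your proof is correct, and your upper bound is essentially the paper's own argument: both invoke Proposition~\ref{pro:cg-se} to obtain a copy of $S_t$ whose pairs are critical pairs $(\{i_\ell\},[i_\ell-1]\cup B_\ell)$, order the indices, and conclude $\{i_2,\dots,i_t\}\subseteq B_1$, so that $t-1\le|B_1|\le k$; your routing through Proposition~\ref{pro:Pkn-S2} rather than arguing the containments directly is a cosmetic difference. The lower bound, however, is genuinely different. The paper exhibits $S_{k+1}$ on the critical pairs $(\{i\},[i-1]\cup\{i+1,\dots,i+k\})$ for $i\in[k+1]$, which keeps the construction aligned with the critical-pair correspondence driving the rest of the section; you instead take the singletons and their complements inside $[k+1]$, i.e., the bottom and top proper layers of the Boolean subalgebra $\bftwo^{k+1}$, all of which lie in $P(k,n)$ simply because they have size at most $k$, with the ad hoc family $\{2\},\{3\},\{1,2\},\{1,3\}$ patching the degenerate case $k=1$. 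Your version buys robustness: the paper's sets $[i-1]\cup\{i+1,\dots,i+k\}$ are subsets of $[n]$ only when $n\ge 2k+1$ (which does not follow from $k\le n-2$), and at $k=1$ the paper's construction collapses, since there $Y_1=X_2=\{2\}$ and only three distinct sets remain; your construction works verbatim for every $k\ge2$ and every $n\ge k+2$. The price is the separate $k=1$ case and the loss of the (inessential for this proposition) feature that the exhibited copy of $S_{k+1}$ consists of critical pairs.
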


\begin{proof}
  For each $i\in[k+1]$, let $(X_i,Y_i)$ be the critical pair
  $(\{i\},[i-1]\cup\{i+1,\dots,i+k\})$.
  Then the elements of $\{X_1,\dots,X_{k+1}\}$ and $\{Y_1,\dots,Y_{k+1}\}$ form
  the standard example $S_{k+1}$.  This shows that $\se(P(k,n))\ge k+1$.

  Now suppose that $P(k,n)$ contains a standard example of size~$k+2$.
  Then there is a sequence $((X_1,Y_1),\dots,(X_{k+2},Y_{k+2}))$ of
  critical pairs of $P(k,n)$ such that whenever $1\le i<j\le k+2$,
  the pairs $(X_i,Y_i)$ and $(X_j,Y_j$ form a copy
  of the standard example $S_2$.  For each $j\in[m]$, let
  $X_j=\{i_j\}$ and $Y_j=[i_j-1]\cup B_j$.  Then the elements of
  $\{i_1,\dots,i_{k+2}\}$ are all distinct.  After a relabelling,
  we may assume that $i_1<i_2<\dots<i_{k+2}$.  Then
  $i_j\in B_1$ for $j=2,\dots,k+2$.  This is impossible since
  $|B_1|\le k$.  The contradiction completes the
  proof.
\end{proof}

The next two results are statements~7 and~8. In the proofs, when we
write $\{i_1,i_2,\dots,i_m\}$ is an $m$-element subset of $[n]$, we
imply $i_1<i_2<\dots<i_m$.  Readers will note that our proofs use
Ramsey theory and borrow from ideas in~\cite{BPST20}.

\begin{proposition}\label{pro:bdim-dim}
  For fixed $k\ge1$, $\bdim(P(k,n))\rightarrow\infty$ as 
  $n\rightarrow\infty$.
\end{proposition}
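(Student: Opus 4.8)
The plan is to first reduce to the case $k=1$. Boolean dimension is monotone under taking (induced) subposets: given a Boolean realizer $(L_1,\dots,L_t)$ with test set $\tau$ for a poset $R$ and a subposet $R'$, restricting each $L_i$ to the ground set of $R'$ and keeping $\tau$ yields a Boolean realizer of $R'$, since the query string of a pair depends only on the relative order of its two entries. By Proposition~\ref{pro:monotonic}, $P(1,n)$ is a subposet of $P(k,n)$ for every $k\ge1$, so $\bdim(P(k,n))\ge\bdim(P(1,n))$, and it suffices to prove that $\bdim(P(1,n))\rightarrow\infty$.

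So fix $t$ and suppose, for contradiction, that $\bdim(P(1,n))\le t$ for arbitrarily large $n$; fix such an $n$ together with a Boolean realizer $\cgR=(L_1,\dots,L_t)$ and a test set $\tau\subseteq\{0,1\}^t$. Write $s_a=\{a\}$ for the singletons and recall that the meet-irreducible elements of $P(1,n)$ are the sets $M_{b,c}=[b-1]\cup\{c\}$ with $b<c$, that $(s_b,M_{b,c})$ is a critical pair, and that $s_a\le_P M_{b,c}$ if and only if $a<b$ or $a=c$. The first (Ramsey) step is to make the orders canonical on the singletons: color each pair $\{a,a'\}$ with $a<a'$ by the vector in $\{0,1\}^t$ whose $r$-th entry records whether $s_a<s_{a'}$ in $L_r$. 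Ramsey's theorem yields a homogeneous set $I$ with $|I|\to\infty$ as $n\to\infty$; on $I$ each $L_r$ orders the singletons monotonically in the index, and after reversing those $L_r$ in which the order is decreasing (and complementing the corresponding coordinate of every string in $\tau$) we may assume that every $L_r$ lists $\{s_a:a\in I\}$ in increasing order of index. Relabel $I$ as $\{1,\dots,N\}$.

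On the relabelled set, each meet-irreducible $M_{b,c}$ acquires a threshold vector: in $L_r$ the singletons preceding $M_{b,c}$ form the initial block $s_1,\dots,s_{\theta_r}$, so $q(s_a,M_{b,c})_r=1$ exactly when $a\le\theta_r(M_{b,c})$. As $a$ increases, the query string $q(s_a,M_{b,c})$ therefore moves monotonically through at most $t+1$ values determined by the sorted thresholds, and it is constant on each \emph{cell} between consecutive thresholds. The comparability rule now forces a sharp constraint: among the singletons $s_a$ with $a\ge b$, the element $s_c$ is the \emph{unique} one below $M_{b,c}$, so $s_c$ must occupy a cell of its own, i.e.\ $M_{b,c}$ has a threshold at $c-1$ and at $c$. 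The plan is to combine this \emph{isolated far point} phenomenon, over a carefully chosen family of meet-irreducibles, with a second Ramsey step that also canonicalizes the mutual order of the meet-irreducibles and their positions relative to the singletons, and then to exploit that the defining relation $a=c$ is an \emph{equality} rather than an order condition. After canonicalization the realizer sees only the order type of the indices, yet it must separate the matched pairs $(s_c,M_{b,c})$ (comparable) from the crossed pairs $(s_{c'},M_{b,c})$ with $c'\neq c$ (incomparable); pushing this separation up the towers $M_{1,c}<M_{2,c}<\dots<M_{c-1,c}$ that share a fixed far point $c$, and across the antichains $\{M_{b,c}:c>b\}$, should overload the $t$ available coordinates once $N$ is large.

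The main obstacle is precisely this last step: converting the local ``$s_c$ needs its own cell'' constraints into a global bound of the form $N\le g(t)$. The difficulty is that each meet-irreducible carries its own $t$ thresholds, so no single linear order is overloaded by a naive count, and the bare conflict structure on critical pairs is only the shift graph (Proposition~\ref{pro:Pkn-S2}), whose chromatic number controls $\dim$ but not $\bdim$. The contradiction must instead use the nested towers together with the equality condition $a=c$, which cannot be read off from order type alone. I expect the clean way to close the argument is an iterated or canonical Ramsey reduction that forces two order-isomorphic configurations of opposite comparability status to receive the same query string, contradicting correctness of $\tau$; tracking the Ramsey sizes then shows that $t$ must grow with $n$, whence $\bdim(P(k,n))\ge\bdim(P(1,n))\to\infty$.
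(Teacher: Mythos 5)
Your reduction to $k=1$ is correct and is exactly the paper's first step: Boolean dimension is monotone under subposets, and Proposition~\ref{pro:monotonic} then lets you work with $P(1,n)$. After that, however, your argument has a genuine gap, which you yourself flag: the threshold/cell analysis of the critical pairs $(\{a\},[b-1]\cup\{c\})$ yields only the local constraint that each meet-irreducible needs thresholds at $c-1$ and $c$, and, as you correctly observe, this overloads nothing because each meet-irreducible carries its own $t$ thresholds. The proposed repair --- a second, unspecified ``iterated or canonical Ramsey reduction'' exploiting the equality $a=c$ --- is a plan, not a proof; no contradiction is ever derived, so the proposal does not establish the statement.

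The missing idea is to abandon the critical pairs entirely and instead chain \emph{incomparable pairs of meet-irreducibles whose composition is comparable}. Color each triple $i_1<i_2<i_3$ of $[n]$ by the query string $q(A,B,\cgR)\in\{0,1\}^t$ of the pair $A=[i_1-1]\cup\{i_2\}$, $B=[i_2-1]\cup\{i_3\}$ (note $A\parallel B$ in $P(1,n)$). This is a coloring of $3$-subsets with $2^t$ colors, so for $n$ large Ramsey's theorem gives a homogeneous $4$-set $i_1<i_2<i_3<i_4$; writing $C=[i_3-1]\cup\{i_4\}$, homogeneity gives $q(A,B,\cgR)=q(B,C,\cgR)$, and then transitivity of each linear order $L_r$ forces, coordinate by coordinate, $q(A,C,\cgR)=q(A,B,\cgR)$. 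But $A\parallel B$ while $A\subseteq C$, so $A<C$ in $P(1,n)$: the same bit string would have to lie outside $\tau$ (witnessed by $(A,B)$) and inside $\tau$ (witnessed by $(A,C)$), a contradiction. This is the ``double shift graph'' phenomenon: the comparability flip is produced purely by chaining order relations, so no canonicalization of the singletons and no use of the equality condition $a=c$ is needed, and the whole proof is a single application of Ramsey's theorem.
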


\begin{proof}
  In view of Proposition~\ref{pro:monotonic}, it suffices to prove the
  result when $k=1$.
  We assume that there is an integer $t\in\bfN$ such that $\bdim(P(1,n))\le t$
  and argue to a contradiction \emph{provided} $n$ is sufficiently large
  in terms of $t$.  Let $\cgR=(L_1,\dots,L_t)$ be a Boolean realizer for 
  $P(1,n)$.

  Now let $\{i_1,i_2,i_3\}$ be a $3$-element subset of $[n]$.
  Then $A=[i_1-1]\cup\{i_2\}$ and $B=[i_2-1]\cup\{i_3\}$ are meet-irreducible
  elements of $P(k,n)$.  Also, $A\parallel B$ in $P(1,n)$.
  The query string $q(A,B,\cgR)$ is a bit string
  of length~$t$, so we have defined a coloring of the $3$-element subsets 
  of $[n]$ using $2^t$ colors.  It follows that if $n$ is sufficiently large, 
  then there is a $4$-element subset $H=\{i_1,i_2,i_3,i_4\}$ of $[n]$ 
  such that each of the $3$-element subsets of $H$ is assigned the
  same color.  Let $A=[i_1-1]\cup\{i_2\}$, $B=[i_2-1]\cup\{i_3\}$ 
  and $C=[i_3-1]\cup\{i_4\}$.
  It follows that $q(A,B,\cgR)=q(B,C,\cgR)$.  
  This implies that $q(A,C,\cgR)=q(A,B,\cgR)$. This is a 
  contradiction since by $i_1<i_2<i_3<i_4$ we have $A\subseteq C$ hence $A<C$ in $P(1,n)$.
\end{proof}

\begin{proposition}\label{pro:ldim-dim}
  For fixed $k\ge1$,
  $\ldim(P(k,n))\rightarrow\infty$ as $n\rightarrow\infty$.
\end{proposition}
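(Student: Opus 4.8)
The plan is to first reduce to the case $k=1$. By Proposition~\ref{pro:monotonic}, $P(1,n)$ is an induced subposet of $P(k,n)$, and local dimension is monotone under passing to induced subposets: restricting every partial linear extension in a local realizer of $P(k,n)$ to the elements of $P(1,n)$ yields a local realizer of $P(1,n)$ whose multiplicity at each point is no larger. Hence $\ldim(P(1,n))\le\ldim(P(k,n))$, and it suffices to prove that $\ldim(P(1,n))\to\infty$ as $n\to\infty$. Concretely, I would fix $r\ge1$, assume $\ldim(P(1,n))\le r$, and aim to bound $n$ by a function of $r$, so that for $n$ large enough no local realizer of multiplicity $r$ can exist.

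Fix such a local realizer $(M_1,\dots,M_m)$ in which every element lies in at most $r$ of the partial linear extensions. As in Section~\ref{sec:Pkn}, the critical pairs of $P(1,n)$ are exactly the pairs $c_{i,j}=(\{i\},[i-1]\cup\{j\})$ with $1\le i<j\le n$, and by Propositions~\ref{pro:Pkn-S2} and~\ref{pro:Pkn-sac} the only obstructions to reversibility are the \emph{telescoping} conflicts between $c_{i,j}$ and $c_{j,l}$. The reversals that drive the argument are organized around the meet-irreducibles $B_{i,j}=[i-1]\cup\{j\}$ and the comparabilities $\{i\}<_P B_{j,l}$ (valid whenever $i<j<l$) together with $\{j\}<_P B_{i,j}$. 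The natural first impulse, mirroring the Boolean-dimension argument in the proof of Proposition~\ref{pro:bdim-dim}, is to force a single $M_t$ to perform two reversals whose combination, through these comparabilities, closes a cycle and contradicts $M_t$ being a partial linear extension. Here one must be careful: the pairs of reversals that would close such a cycle are precisely the ones that can never coexist in a single partial linear extension, so they are avoided automatically and yield no contradiction by themselves. Consequently the contradiction must come from the multiplicity bound, not from a single extension.

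The engine is therefore a Ramsey argument feeding a counting argument. For each critical pair $c_{i,j}$ choose a partial linear extension $\phi(i,j)$ that reverses it; since $\phi(i,j)$ must contain the atom $\{i\}$, it lies in the list $\mathcal{M}(\{i\})$ of the at most $r$ extensions through $\{i\}$. I would colour the pair $\{i,j\}$ by the rank of $\phi(i,j)$ within $\mathcal{M}(\{i\})$, a colour from $[r]$, and apply Ramsey's theorem to obtain a large set $H=\{h_1<\dots<h_s\}$ all of whose pairs receive a common colour $c$. On $H$ the reverser becomes canonical: there is a single extension $N_a$ (the $c$-th member of $\mathcal{M}(\{h_a\})$) with $\{h_a\}>_{N_a}[h_a-1]\cup\{h_b\}$ for every $b>a$, so each $N_a$ simultaneously contains the whole \emph{fan} of meet-irreducibles $B_{h_a,h_b}$. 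One then confronts these fans with the reversals forced by condition~(2) on the incomparable pairs $B_{h_a,h_b}\parallel B_{h_b,h_c}$, and with the chains $B_{h_a,h_b}<_P B_{h_{a'},h_b}$ (fixed head, increasing tail), to try to show that some element is pushed into more than $r$ of the extensions.

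The hard part is exactly this last step. Because a single partial linear extension can reverse very many incomparable pairs at once, by placing one element above or below an entire chain, no crude pigeonhole over one round of colouring suffices; this is consistent with the local dimension of $P(1,n)$ growing only slowly. I expect the argument to require iterating the colouring: after fixing $H$ and the canonical fans $N_a$, re-colour triples (or longer tuples) of $H$ according to how the reversers of the consecutive pairs sit relative to the bounded lists $\mathcal{M}(B_{h_a,h_b})$, pass to a further monochromatic subset, and repeat, borrowing the recursive scheme of~\cite{BPST20}. Each round should either exhibit an element forced into more than $r$ extensions, contradicting multiplicity $r$, or rigidify the structure; after boundedly many rounds, depending only on $r$, the configuration becomes rigid enough that the multiplicity bound is violated, which caps $s$ and hence $n$. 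Making this recursion precise, and in particular controlling exactly which atoms and meet-irreducibles each extension is compelled to contain, is the main obstacle.
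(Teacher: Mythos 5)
Your reduction to $k=1$ via Proposition~\ref{pro:monotonic} is correct and matches the paper, and your general toolkit (Ramsey's theorem applied to a coloring built from ``occurrence indices'' of reversing ples) is the right one. But the proposal has a genuine gap, and you say so yourself: the entire contradiction is deferred to an unspecified iterated-recoloring scheme (``Making this recursion precise \dots is the main obstacle''), so nothing beyond the setup is actually proved. Worse, the heuristic steering you there is backwards. You dismiss the single-extension route on the grounds that reversals closing a cycle ``can never coexist in a single partial linear extension, so they are avoided automatically,'' and conclude that the contradiction must come from the multiplicity bound. That dismissal is valid only for the $2$-cycles between critical pairs $(\{i\},[i-1]\cup\{j\})$ and $(\{j\},[j-1]\cup\{l\})$ that you were considering; it does not apply to longer chains of reversals among pairwise \emph{incomparable} meet-irreducibles, and forcing such a chain into one ple is exactly what the paper does. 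In the paper's proof the multiplicity bound $t$ enters only to make the number of colors finite ($t^2$); the final contradiction is a single ple violating a comparability of $P(1,n)$.

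The missing idea is to color $4$-element subsets, not pairs, and to record the occurrence indices of \emph{both} meet-irreducibles involved, rather than a rank inside an atom's list $\mathcal{M}(\{i\})$. For $i_1<i_2<i_3<i_4$, the sets $[i_1-1]\cup\{i_3\}$ and $[i_2-1]\cup\{i_4\}$ are incomparable meet-irreducibles; let $L_s$ be the \emph{first} ple putting the former above the latter, and color $\{i_1,i_2,i_3,i_4\}$ by the pair $(r,r')\in[t]^2$ for which $L_s$ is the $r$-th ple containing the former and the $r'$-th containing the latter. This buys a coincidence principle your pair-coloring cannot deliver: if two such first-reversers share the same occurrence index of a common element, they are the \emph{same} ple. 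On a monochromatic $7$-set $\{i_1,\dots,i_7\}$, setting $A=[i_1-1]\cup\{i_4\}$, $B=[i_2-1]\cup\{i_5\}$, $C=[i_3-1]\cup\{i_6\}$, $D=[i_5-1]\cup\{i_7\}$, the $4$-sets $\{i_1,i_2,i_4,i_5\}$, $\{i_1,i_3,i_4,i_6\}$, $\{i_2,i_3,i_5,i_6\}$ force the reversals $A>B$, $A>C$, $B>C$ into one common ple $L_s$ and force $r=r'$; then $\{i_3,i_5,i_6,i_7\}$ forces $C>D$ into the same $L_s$; transitivity gives $A>D$ in $L_s$, contradicting $A\subset D$, i.e., $A<D$ in $P(1,n)$, since a ple must respect comparabilities among its elements. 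This is a one-round argument: no recursion and no counting of how many ples an element is ``pushed into'' is needed. Your coloring of pairs $\{i,j\}$ by the rank of a reverser within $\mathcal{M}(\{i\})$ pins down the reversing ple only through the atom $\{i\}$, so distinct pairs with the same color need not share a ple --- which is precisely why your plan stalls at the step you call the hard part.
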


\begin{proof}
  In view of Proposition~\ref{pro:monotonic}, it suffices to prove the
  result when $k=1$.
  We assume that there is an integer $t\in\bfN$ such that $\ldim(P(1,n))\le t$
  and argue to a contradiction \emph{provided} $n$ is sufficiently large in terms
  of $t$.  Let $\cgR=(L_1,\dots,L_m)$ be a local realizer of $P(1,n)$ such that
  each element of $P(1,n)$ appears in at most $t$ different extensions in this
  list.

  For each $4$-element subset $\{i_1,i_2,i_3,i_4\}$ of $[n]$, we consider
  the meet-irreducible element $A=[i_1-1]\cup\{i_3\}$ and the
  meet-irreducible element $B=[i_2-1]\cup\{i_4\}$.  Then
  $A\parallel B$ in $P(1,n)$.  Let $s$ be the least integer in $[m]$ such 
  that $A>B$ in $L_s$.  Then
  there is a uniquely determined pair $(r,r')$ of integers in $[t]$ such
  that occurrence $r$ of $A$ is in $L_s$ and occurrence $r'$ of $B$ is
  in $L_s$.  

  Now we are coloring the $4$-element subsets of $[n]$ with $t^2$ colors.
  It follows that if $n$ is sufficiently large, then
  there is a $7$-element subset $H=\{i_1,\dots,i_7\}$
  such that each $4$-element subset of $H$ is assigned the
  same color, say $(r,r')$, where $r,r'\in[t]$.  

  Let $A=[i_1-1]\cup\{i_4\}$, $B=[i_2-1]\cup\{i_5\}$, $C=[i_3-1]\cup\{i_6\}$, 
  and $D=[i_5-1]\cup\{i_7\}$.  Let $s$ be the least integer in $[m]$ such that
  $A>B$ in $L_s$.  Using the set $\{i_1,i_2,i_4,i_5\}$, we know that
  occurrence $r$ of $A$ is in $L_s$, and occurrence $r'$ of $B$ is in $L_s$.
  Using the set $\{i_1,i_3,i_4,i_6\}$,  we know that occurrence $r'$ of $C$
  is also in $L_s$ with $A>C$ in $L_s$.  Using the
  set $\{i_2,i_3,i_5,i_6\}$, we know that occurrence $r$ of $B$ is in $L_s$ with
  $B>C$ in $L_s$.  This forces $r=r'$.

  Using the set $\{i_3,i_5,i_6,i_7\}$, we conclude that occurrence $r$ of $C$
  is over occurrence $r$ of $D$ in $L_s$.  This forces $A>D$ in $L_s$, which
  is a contradiction since $A<D$  in $P(1,n)$.
  The contradiction completes the proof.
\end{proof}

The final result in this section is statement~9.  

\begin{proposition}\label{pro:fdim}
  If $1\le k\le n-2$, then
  $\fdim(P(k,n))<2^{k+1}$.
\end{proposition}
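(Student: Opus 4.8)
The plan is to work with the fractional analogue of the reversible-set formulation used throughout this section. I would first recall that $\fdim(P)\le t$ holds precisely when there is a probability distribution $\mu$ on the linear extensions of $P$ under which every critical pair is reversed with probability at least $1/t$: given such a $\mu$, the function $f=t\mu$ is a fractional realizer of total weight $t$, since for each $(x,y)\in\Inc(P)$ we get $\sum\{f(L):x>y\text{ in }L\}=t\cdot\Pr_\mu[x>y]\ge1$. Passing from incomparable pairs to critical pairs is legitimate here for the usual reason: for every $(x,y)\in\Inc(P)$ there is a critical pair $(a,b)$ with $a\le_P x$ and $y\le_P b$, so any linear extension reversing $(a,b)$ also reverses $(x,y)$, whence the reversal probability of $(x,y)$ is at least that of $(a,b)$. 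Thus it suffices to build a distribution under which each critical pair of $P(k,n)$ is reversed with probability strictly larger than $2^{-(k+1)}$.

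For the construction I would use the single-coordinate reversible sets already implicit in Proposition~\ref{pro:dim-translation}. The critical pairs of $P(k,n)$ are the pairs $(\{i\},[i-1]\cup B)$ with $[i-1]\cup B\in J(k,n)$, and here $|B|\le k$ always. For each $T\subseteq[n]$, the set $S_T$ of critical pairs $(\{i\},[i-1]\cup B)$ with $i\in T$ and $B\cap T=\emptyset$ is reversible by Propositions~\ref{pro:Pkn-S2} and~\ref{pro:Pkn-sac} (this is the ``one color'' instance of Proposition~\ref{pro:dim-translation}); fix a linear extension $L_T$ reversing $S_T$. I would then let $\mu$ choose $T$ uniformly among the $2^n-1$ \emph{nonempty} subsets of $[n]$ and output $L_T$. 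For a critical pair with $|B|=b\le k$, the admissible sets $T$ are those fixing the $b+1$ memberships of $\{i\}\cup B$ and free on the remaining $n-b-1$ elements; all $2^{\,n-b-1}$ of them are nonempty (as $i\in T$), so the pair is reversed with probability at least
\[
  \frac{2^{\,n-b-1}}{2^n-1}\ \ge\ \frac{2^{\,n-k-1}}{2^n-1}.
\]

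Taking the reciprocal of this bound yields
\[
  \fdim(P(k,n))\ \le\ \frac{2^n-1}{2^{\,n-k-1}}\ =\ 2^{k+1}\bigl(1-2^{-n}\bigr)\ <\ 2^{k+1},
\]
the last inequality using $k\le n-2$. The one place that needs genuine care is strictness, and this is where I expect the only real obstacle to lie: the naive choice of $T$ uniform over \emph{all} $2^n$ subsets (equivalently, placing each element in $T$ independently with probability $1/2$) reverses the worst pairs, those with $|B|=k$, with probability exactly $2^{-(k+1)}$, and so gives only $\fdim(P(k,n))\le2^{k+1}$. Deleting the single useless subset $T=\emptyset$, which reverses no critical pair whatsoever, nudges the reversal probability of \emph{every} critical pair strictly above $2^{-(k+1)}$; since each critical pair is still reversed by at least $2^{\,n-k-1}\ge2$ of the surviving subsets, none is lost, and this is exactly what upgrades ``$\le$'' to ``$<$'' uniformly in $k$ (in particular handling the delicate boundary case $k=1$). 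Everything else is the routine counting displayed above.
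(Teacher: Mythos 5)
Your proposal is correct and is essentially the paper's own construction: the same family of linear extensions indexed by the nonempty subsets $T\subseteq[n]$ (each reversing the critical pairs $(\{i\},[i-1]\cup B)$ with $i\in T$, $B\cap T=\emptyset$), the same count $2^{n-|B|-1}\ge 2^{n-k-1}$ of admissible subsets per critical pair, and the same exclusion of $T=\emptyset$ to obtain the strict inequality; your uniform distribution scaled by $t=(2^n-1)/2^{n-k-1}$ assigns exactly the paper's weights $2^{k+1}/2^n$. The only cosmetic differences are the probabilistic phrasing and that you infer the existence of each $L_T$ from the strict-alternating-cycle criterion (via Propositions~\ref{pro:Pkn-S2} and~\ref{pro:Pkn-sac}) where the paper writes the extension down explicitly.
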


\begin{proof}
  We show that $\fdim(P(k,n))<2^{k+1}$ by constructing an appropriate fractional
  realizer $f$.  

  Let $m=2^n$, and 
  let $\sigma=(Z_1,\dots,Z_{m-1})$ be a listing of the non-empty
  subsets of $[n]$.  The order of the sets in the list is arbitrary.  Then 
  for each $\alpha\in[m-1]$, there is a linear extension
  $L_\alpha$ of $P(k,n)$ that reverses all critical pairs of the form
  $(\{i\},[i-1]\cup B)$, where $i\in Z_\alpha$ and $j\not\in Z_\alpha$ whenever
  $i<j$ and $j\in B$. Namely, if $Z_\alpha=\{i_1 \ldots, i_{\ell}\}$ the ordering $[i_1-1]\cup B_{i_{1,1}},\ldots, [i_1-1]\cup B_{i_{1,r}}, \{i_1\}, [i_2-1]\cup B_{i_{2,1}}\ldots \{i_{\ell}\}$ respects the comparabilities of $P(k,n)$ since the sets $B$ are disjoint from $Z_\alpha$. Hence, $L_\alpha$ can be taken to be any linear extension respecting this ordering. 

  Then we set $f(L_\alpha)=2^{k+1}/m$ for each $\alpha\in[m-1]$.  Also, 
  we set $f(L)=0$ for all other linear extensions of $P(k,n)$.

  We now show that $f$ is a fractional realizer of $P(k,n)$.  We need only to 
  show that if $C,D\in P(k,n)$ and $C\parallel_{P(k,n)} D$, then
  the following inequality holds:
  \begin{equation}\label{eqn:easy} 
    s(C,D)=\sum\{f(L_\alpha): \alpha\in[m-1], C> D \text{ in } L_\alpha\}\ge1.
  \end{equation}
  Choose a critical pair $(X,Y)$ such that
  $X\le C$ in $P(k,n)$ and $D\le Y$ in $P(k,n)$.  If $\alpha\in[m-1]$ and
  $X>Y$ in $L_\alpha$, then $C>D$ in $L_\alpha$.  It follows that we
  need only show that inequality~(\ref{eqn:easy}) holds for every
  critical pair.

  Let $(X,Y)=(\{i\},[i-1]\cup B)$ be a critical pair.  Then the 
  number of sets in the sequence $\sigma$ that contain $i$ but
  do not contain $j$ when $i<j$ and $j\in B$ is exactly
  $2^{n-|B|-1}$, which is at least $2^{n-k-1}$.  It follows that
  \[
    s(X,Y)\ge 2^{n-k-1}\cdot \frac{2^{k+1}}{m}=1
  \]
  Further, we note that
  \[
    \sum_{i=1}^{m-1}f(L_i)=2^{k+1}\frac{m-1}{m}<2^{k+1}.
  \]
  We conclude that $\fdim(P(k,n))<2^{k+1}$.
\end{proof}

\section{Open Problems}

For posets in general, the standard examples have bounded Boolean dimension,
bounded local dimension, and unbounded dimension.  As noted
previously, the constructions given in~\cite{TroWal17} show that neither of
Boolean dimension and local dimension is bounded in terms of the other.
However, for convex geometries, we have been unable to separate 
dimension from either of Boolean dimension and local dimension.
Also, we have been unable to separate Boolean dimension and
local dimension in either direction.  We note that for the special class
of distributive lattices, the dimension coincides with the convex dimension, see~\cite{EdeJam87}. However large width of the poset of meet-irreducible elements of a distributive lattice forces a large Boolean subalgebra, which forces large Boolean dimension~\cite{BHLMM23} and large local dimension~\cite{KMMSSUW20}. Hence, separating these parameters is not
possible on distributive lattices. On the other hand, many other special classes of convex 
geometries have been identified in the literature, and we wonder whether
some of them exhibit properties that enable these questions to be 
answered.

For the family $\{P(k,n);1\le k\le n-2\}$ of convex geometries discussed
in the preceding section, we determined the value of $\dim(P(1,n))$
exactly. When $k\ge2$, we feel it is unlikely that we can obtain an exact
formula for $\dim(P(k,n))$.  However, a lower bound of the
form $c2^k\log n$ where $c$ is a positive constant should hold.

We suspect that the upper bound $\fdim(P(k,n))<2^{k+1}$ is asymptotically
tight, i.e., for every $\epsilon>0$, $\fdim(P(k,n))>2^{k+1}-\epsilon$
when $n$ is sufficiently large.

Fractional dimension is the linear programming relaxation of dimension.
Although not studied in the main body of this paper, there is a natural
linear programming relaxation of local dimension, called 
\textit{fractional local realizer}.  We refer the reader to~\cite{SmiTro21}
for the precise definition.  We wonder whether the fractional local dimension
of the convex geometry $P(k,n)$ is always strictly less than its
fractional dimension.

Convex geometries originally arise from abstracting convex subsets of finite point sets in Euclidean space and indeed every convex geometry $P$ on $[n]$ can be represented by point sets in $\mathbb{R}^d$. The minimum such $d$ is bounded by $\min(n,\cdim(P))$, see~\cite{KNO05,RG17}. Representations through hyperplane arrangements in $\mathbb{R}^d$ seem possible as well~\cite{CCK23}. What is the relation of this \emph{Euclidean dimension} to the other concepts?

Finally, we believe that the notion of VC-dimension could be extended to general posets $P$, where it can be defined as the minimum VC-dimension over all set systems whose inclusion order is isomorphic to $P$. How does this parameter related to the other concepts of dimension? The question relates to rich literature about poset embeddings into Boolean algebras, see~\cite{Trot75,BCIJ23,Wil92}. By the Boolean property, for convex geometries this notion coincides with the VC-dimension as we defined it previously. Other classes in which a natural embedding into the Boolean algebra give a VC-dimension arise from oriented matroids by Bj\"orner, Edelman, and Ziegler~\cite{BEZ90} and more generally 
from tope graphs of complexes of oriented matroids 
(see~\cite[Chapter 7.6]{Khab} for several related questions).

\subsubsection*{Acknowledgements.} KK is supported by the Spanish Research Agency
through grants RYC-2017-22701, PID2019-104844GB-I00, PID2022-137283NB-C22 and the Severo Ochoa and María de Maeztu Program for Centers and Units of Excellence in R\&D (CEX2020-001084-M) and by the French Research Agency through ANR project DAGDigDec: ANR-21-CE48-0012.

      \end{document}